\newtheorem{thm}{Theorem}[section]
\newtheorem{coro}[thm]{Corollary}
\newtheorem{prop}[thm]{Proposition}
\newtheorem{lem}[thm]{Lemma}
\newtheorem*{thmn}{Theorem}
\theoremstyle{definition}
\newtheorem{defn}[thm]{Definition}
\newtheorem{rem}[thm]{Remark}
\newtheorem{question}[thm]{Question}
\newcommand{\Rset}{\mathbb{R}}
\newcommand{\Nset}{\omega}
\newcommand{\Zset}{\mathbb{Z}}
\newcommand{\Cset}{2^\Nset}
\newcommand{\Pset}{\Nset^\Nset}
\newcommand{\UPset}{\Nset^{\uparrow\Nset}}
\newcommand{\eps}{\varepsilon}
\newcommand{\del}{\delta}
\newcommand{\subs}{\subseteq}
\newcommand{\sups}{\supseteq}
\newcommand{\clos}[1]{\overline{#1}}
\newcommand{\dist}{\underline{d}}
\newcommand{\rest}{{\restriction}}
\renewcommand{\leq}{\leqslant}
\renewcommand{\geq}{\geqslant}
\newcommand{\fmany}{\forall^\infty}
\newcommand{\emany}{\exists^\infty}
\newcommand{\upto}{{\nearrow}}
\newcommand{\grG}{\mathbb{G}}
\newcommand{\abs}[1]{\lvert#1\rvert}
\newcommand{\norm}[1]{\lVert#1\rVert}
\newcommand{\seq}[1]{\langle#1\rangle}
\newcommand{\seqeps}{\seq{\eps_n}}
\newcommand{\Seqeps}{\seq{\eps_n}\in(0,\infty)^\omega}
\newcommand{\si}{$\sigma$\nobreakdash-}
\newcommand{\hm}{\mathcal H}
\newcommand{\uhm}{\overline{\mathcal H}}
\DeclareMathOperator{\non}{\mathsf{non}}
\DeclareMathOperator{\add}{\mathsf{add}}
\DeclareMathOperator{\cov}{\mathsf{cov}}
\DeclareMathOperator{\diam}{diam}
\DeclareMathOperator{\proj}{proj}
\newcommand{\smz}{\ensuremath{\boldsymbol{\mathsf{Smz}}}}
\newcommand{\ssmz}{\ensuremath{{\boldsymbol{\mathsf{Smz}}^\sharp}}}
\newcommand{\MM}{\mathcal M}
\newcommand{\EE}{\mathcal E}
\newcommand{\JJ}{\mathcal J}
\newcommand{\bbb}{\mathfrak b}
\newcommand{\nonM}{\non\MM}
\newcommand{\addM}{\add\MM}
\newcommand{\mc}[1]{\mathcal{#1}}
\newenvironment{enum}{\begin{enumerate}[\rm(i)]}{\end{enumerate}}
\newenvironment{itemyze}%
{\begin{list}{\textbullet}{\setlength{\labelwidth}{1ex}\setlength{\leftmargin}{2.1em}}}%
{\end{list}}
\newcommand{\Implies}{\ensuremath{\Rightarrow}}
\def\vardim@#1#2{\mathop{\vtop{\ialign{##\crcr
 \hfil$#1\m@th\operator@font dim$\hfil\crcr
 \noalign{\nointerlineskip\kern\ex@}#2#1\crcr
 \noalign{\nointerlineskip\kern-\ex@}\crcr}}}}
\def\varinjdim{\mathpalette\vardim@\rightarrowfill@}
\begin{document}
%%%%%%%%%%%%%%%%%%%%%%%%%%%%%%%%%%%%%%%%%%%%%%%%%%%%%%%%%%%%%%%%%
%%%%%%%%%%%%%%%%%%%%%%%%%%%%%%%%%%%%%%%%%%%%%%%%%%%%%%%%%%%%%%%%%
\title
[Meager-additive sets in topological groups]
{Meager-additive sets in topological groups}
%%%%%%%%%%%%%%%%%%%%%%%%%%%%%%%%%%%%%%%%%%%%%%%%%%%%%%%%%%%%%%%%%
%%%%%%%%%%%%%%%%%%%%%%%%%%%%%%%%%%%%%%%%%%%%%%%%%%%%%%%%%%%%%%%%%
\author{Ond\v rej Zindulka}
\address
{Department of Mathematics\\
Faculty of Civil Engineering\\
Czech Technical University\\
Th\'akurova 7\\
160 00 Prague 6\\
Czech Republic}
\email{zindulka@mat.fsv.cvut.cz}
\urladdr{http://mat.fsv.cvut.cz/zindulka}

\subjclass[2000]{03E05,03E20,28A78}
\keywords{Polish group, meager-additive, sharply meager-additive,
sharp measure zero, strong measure zero,
Hausdorff measure}
%%%%%%%%%%%%%%%%%%%%%%%%%%%%%%%%%%%%%%%%%%%%%%%%%%%%%%%%%%%%%%%%%%
\thanks{Work on this project was partially conducted during the author's sabbatical stay
at the Instituto de matem\'aticas, Unidad Morelia,
Universidad Nacional Auton\'oma de M\'exico supported by CONACyT grant no.~125108.
}

%%%%%%%%%%%%%%%%%%%%%%%%%%%%%%%%%%%%%%%%%%%%%%%%%%%%%%%%%%%%%%%%%
%%%%%%%%%%%%%%%%%%%%%%%%%%%%%%%%%%%%%%%%%%%%%%%%%%%%%%%%%%%%%%%%%
\begin{abstract}
By the Galvin-Mycielski-Solovay theorem, a subset $X$ of the line has
Borel's strong measure zero if and only if $M+X\neq\Rset$ for each meager set $M$.

A set $X\subs\Rset$ is \emph{meager-additive} if $M+X$ is meager for each meager set $M$.
Recently a theorem on meager-additive sets that perfectly parallels
the Galvin-Mycielski-Solovay theorem was proven: A set $X\subs\Rset$
is meager-additive if and only if it has sharp measure zero, a notion
akin to strong measure zero.

We investigate the validity of this result in Polish groups.
We prove, e.g., that a set in a locally compact Polish group admitting an invariant
metric is meager-additive if and only if
it has sharp measure zero.
We derive some consequences and calculate some cardinal invariants.
\end{abstract}
%%%%%%%%%%%%%%%%%%%%%%%%%%%%%%%%%%%%%%%%%%%%%%%%%%%%%%%%%%%%%%%%%
%%%%%%%%%%%%%%%%%%%%%%%%%%%%%%%%%%%%%%%%%%%%%%%%%%%%%%%%%%%%%%%%%
\maketitle

%%%%%%%%%%%%%%%%%%%%%%%%%%%%%%%%%%%%%%%%%%%%%%%%%%%%%%%%%%%%%%%%%
%%%%%%%%%%%%%%%%%%%%%%%%%%%%%%%%%%%%%%%%%%%%%%%%%%%%%%%%%%%%%%%%%
\section{Introduction}
%%%%%%%%%%%%%%%%%%%%%%%%%%%%%%%%%%%%%%%%%%%%%%%%%%%%%%%%%%%%%%%%%
%%%%%%%%%%%%%%%%%%%%%%%%%%%%%%%%%%%%%%%%%%%%%%%%%%%%%%%%%%%%%%%%%

100 years ago \'Emile Borel~\cite{MR1504785} defined the notion of
strong measure zero: a metric space $X$ has
\emph{strong measure zero} (hereafter \smz) if for any sequence $\seqeps$
of positive numbers there is a cover $\{U_n\}$ of $X$
such that $\diam U_n\leq\eps_n$ for all $n$.
In the same paper, Borel conjectured that every \smz{} set of reals is countable.
This statement, known as the \emph{Borel Conjecture}, attracted
a lot of attention.
It is well-known that the Borel Conjecture is independent of ZFC, the usual axioms
of set theory.
The proof of consistency of its failure was settled by 1948
by Sierpi\'nski~\cite{SIERPINSKI}, who proved in 1928
that the Luzin set (that exists under the Continuum Hypothesis) is a counterexample,
and G\"odel~\cite{MR0002514},
who proved in 1948 the consistency of the Continuum Hypothesis.
The consistency of the Borel Conjecture was proved in 1976 in Laver's ground-breaking
paper~\cite{MR0422027}.

Over time numerous characterizations of \smz{} were discovered.
Maybe the most interesting is the Galvin--Mycielski--Solovay Theorem:
Confirming Prikry's conjecture, Galvin, Mycielski and Solovay~\cite{GMS}
proved the following:

\begin{thmn}[\cite{GMS}]\label{GMS}
A set $X\subs\Rset$ is \smz{} if and only if $X+M\neq\Cset$
for each meager set $M\subs\Cset$.
\end{thmn}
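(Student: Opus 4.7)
My plan is to argue both implications via the Bartoszy\'nski combinatorial characterisation of meager sets in $\Cset$: a set $M\subs\Cset$ is meager if and only if there exist an increasing sequence $0=n_0<n_1<\cdots$ in $\Nset$ and $y\in\Cset$ such that every $m\in M$ satisfies $m\rest[n_k,n_{k+1})\neq y\rest[n_k,n_{k+1})$ for all but finitely many $k$. Both directions then reduce to a combinatorial interplay between \smz{} covers of $X$ and the block partition $\{[n_k,n_{k+1})\}$; the theorem is customarily proved for the group $\Cset=(\Zset/2)^\Nset$ and transferred to $\Rset$ by a standard coding.

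For the forward implication, assume $X$ is \smz{} and $M$ is meager with witness $(n_k),y$. A routine refinement of \smz{} --- partition $\Nset=\bigsqcup_i A_i$ into infinitely many infinite sets and apply the defining property separately on each $A_i$ with diameters $2^{-n_{k+1}}$ --- yields a cover $\{V_k:k\in\Nset\}$ of $X$ with $\diam V_k\leq 2^{-n_{k+1}}$ such that every $x\in X$ lies in $V_k$ for infinitely many $k$. Enclose each $V_k$ in a basic clopen set $[\tau_k]$ with $\tau_k\in 2^{n_{k+1}}$ and define $z\in\Cset$ by $z\rest[n_k,n_{k+1}):=y\rest[n_k,n_{k+1})\oplus\tau_k\rest[n_k,n_{k+1})$. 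For any $x\in X$ and any $k$ with $x\in V_k$, the block equality $x\rest[n_k,n_{k+1})=\tau_k\rest[n_k,n_{k+1})$ forces $(z+x)\rest[n_k,n_{k+1})=y\rest[n_k,n_{k+1})$; since this holds for infinitely many $k$, the characterisation gives $z+x\notin M$, i.e.\ $z\notin x+M$. Consequently $z\notin X+M$.

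For the reverse implication I would argue the contrapositive: if $X$ is not \smz{}, construct a meager $M$ with $X+M=\Cset$. The failure of \smz{} supplies a bad diameter sequence, which after rounding we may assume to be $\seq{2^{-f(k)}}$ for some $f\in\Nset^\Nset$, so that no choice of $\sigma_k\in 2^{f(k)}$ yields a cover of $X$ by the clopen sets $[\sigma_k]$. The goal is to extract a block partition $(n_k)$ with the \emph{block-disagreement property}: for every $w\in\Cset$ there exists $x\in X$ such that $x\rest[n_k,n_{k+1})\neq w\rest[n_k,n_{k+1})$ for all but finitely many $k$. Granting this, fix any $y\in\Cset$ and set $M:=\{m\in\Cset:\fmany k,\ m\rest[n_k,n_{k+1})\neq y\rest[n_k,n_{k+1})\}$, which is meager by the characterisation; for each $z\in\Cset$, applying block-disagreement to $w:=z+y$ produces $x\in X$ with $z+x\in M$, so $z\in x+M\subs X+M$.

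The main obstacle is extracting the block-disagreement property from the bare failure of \smz{}. The naive approach (set $n_{k+1}:=f(k)$ and $\sigma_k:=w\rest f(k)$) yields $x\in X$ disagreeing with $w$ somewhere in $[0,f(k))$ for every $k$, not necessarily inside the block $[n_k,n_{k+1})$. Upgrading to uniform in-block disagreement for \emph{every} $w\in\Cset$ requires a diagonal, Cantor-scheme construction of $(n_k)$ that at each stage absorbs the finitely many threats posed by possible values of $w\rest n_{k+1}$ and invokes the failure of \smz{} to escape them. Calibrating $(n_k)$ to grow fast enough for this diagonalisation --- while keeping $M$ genuinely meager --- is the technical heart of the argument.
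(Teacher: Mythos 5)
A preliminary remark: the paper does not prove this theorem --- it is quoted from \cite{GMS} as background, and only its ``easy half'' is recalled later (Prikry's argument, mentioned at the start of Section \ref{sec:sharply}) --- so I am judging your argument on its own merits. Your forward implication is correct and complete for $\Cset$: the ``each point covered infinitely often'' refinement of \smz{}, the interval characterisation of meager sets, and the block computation $(z+x)\rest[n_k,n_{k+1})=y\rest[n_k,n_{k+1})$ for infinitely many $k$ constitute the standard modern proof of the Galvin--Mycielski--Solovay implication in Cantor space. (The transfer to $\Rset$ is only gestured at, but the theorem as stated already mixes $\Rset$ and $\Cset$, so working in $\Cset$ is the right reading.)

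The genuine gap is in the reverse implication, and it is twofold. First, you explicitly defer the ``technical heart'': extracting a single block partition $\seq{n_k}$ with the block-disagreement property against \emph{every} $w\in\Cset$ simultaneously is asserted to need ``a diagonal, Cantor-scheme construction,'' but no such construction is given, and the naive attempt you yourself rule out shows the step is not automatic. Second, and more tellingly, you have inverted the difficulty of the two halves: the direction ``$X+M\neq\Cset$ for all meager $M$ implies $X$ is \smz'' is the easy half, due to Prikry, and needs none of this combinatorics. Given $\seqeps$ witnessing the failure of \smz{}, fix a countable dense set $\{d_n\}$ and let $M$ be the complement of the dense open set $\bigcup_n U_n$, where $U_n$ is the open ball of radius $\eps_n/2$ centred at $d_n$; then $M$ is closed nowhere dense. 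If some $z$ avoided $X+M$, then $z+x\in\bigcup_n U_n$ for every $x\in X$, and invariance of the metric makes the balls of radius $\eps_n/2$ about the points $z+d_n$ a cover of $X$ by sets of diameter at most $\eps_n$, contradicting the choice of $\seqeps$; hence $X+M=\Cset$. This is exactly the argument behind the paper's remark that this is ``the easy part'' of the theorem, and Theorem \ref{main1} is its sharp-measure-zero analogue. Your block-disagreement statement is in fact true, but only a posteriori --- it follows from Prikry's argument together with the interval characterisation of the resulting meager set --- so deriving it directly by diagonalisation is doing the easy half the hard way, and as submitted that derivation is missing.
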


This theorem led to the study of small sets defined in a similar manner.
We focus on the meager-additive sets. By definition, a set $X\subs\Rset$
is \emph{meager-additive} if $X+M$ is meager for each meager set $M$.
The definition easily extends to any topological group.
Meager-additive sets in $\Cset$ have received a lot of attention.
They were investigated by many, most notably by
Bartoszy\'nski and Judah~\cite{MR1350295}, Pawlikowski~\cite{MR1380640}
and Shelah~\cite{MR1324470}.

In a recent paper~\cite{Zin-prism}, the author of the present paper used
combinatorial properties of meager-additive sets described by
Shelah~\cite{MR1324470} and Pawlikowski~\cite{MR1380640} to characterize
meager-additive sets in $\Cset$ in a way that nicely parallels
Borel's definition of strong measure zero, thus obtaining a theorem
that is very similar to the Galvin--Mycielski--Solovay Theorem.

In more detail, a set $X$ has sharp measure zero
(thereinafter \ssmz) if for any sequence $\seqeps$
of positive numbers there is a \emph{$\gamma$-groupable} cover $\{U_n\}$ of $X$
such that $\diam U_n\leq\eps_n$ for all $n$.
Note that the only difference between strong measure zero and sharp measure zero
is that the cover is required to be $\gamma$-groupable (the notion is defined below).
With these definitions, the theorem reads
\begin{thmn}[{\cite[Theorem 5.1]{Zin-prism}}]
A set $X\subs\Cset$ has sharp measure zero if and only if it is meager-additive.
\end{thmn}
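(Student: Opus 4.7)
The plan is to derive both directions from the standard combinatorial encodings of meagerness and of meager-additivity in $\Cset$ that are already available in the literature cited above. Two tools are key: Bartoszy\'nski's characterization---$M\subs\Cset$ is meager iff there exist an interval partition $\seq{I_k}$ of $\Nset$ and $y\in\Cset$ such that $\fmany k,\ z\rest I_k\neq y\rest I_k$ for every $z\in M$---and the Shelah--Pawlikowski characterization of meager-additivity---$X$ is meager-additive iff for every interval partition $\seq{I_k}$ there are a coarser partition of $\Nset$ into finite blocks $\seq{B_k}$ of intervals and a $y\in\Cset$ such that for every $x\in X$ and all but finitely many $k$ there is an interval $I_j\subs B_k$ with $x\rest I_j=y\rest I_j$.

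Next I would reformulate sharp measure zero in matching combinatorial terms. Given $\Seqeps$, after passing to an equivalent sequence of the form $2^{-\abs{I_k}}$ along a suitable interval partition $\seq{I_k}$, the existence of a $\gamma$-groupable cover of $X$ by basic clopens of diameter $\leq\eps_n$ is equivalent to the existence of finite families $\mathcal F_k$ of ``local patterns'' on prescribed intervals such that for each $x\in X$, $\fmany k$ some $t\in\mathcal F_k$ agrees with $x$ on $I_k$. In this form, sharp measure zero is visibly the quantitative analogue of the Shelah--Pawlikowski clause.

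With this dictionary set up, the direction $(\Leftarrow)$ is nearly a direct translation: given $\seqeps$, pick $\seq{I_k}$ tuned to $\seqeps$, apply the Shelah--Pawlikowski condition to $X$ and this partition, and read off a $\gamma$-groupable cover of the required diameter from the resulting blocks $\seq{B_k}$ and pattern $y$. For $(\Rightarrow)$, given a meager $M$, extract Bartoszy\'nski data $y,\seq{n_k}$ for $M$, then choose $\seqeps$ so small that each cover element sits inside a single interval $[n_\ell,n_{\ell+1})$; apply sharp measure zero to produce a $\gamma$-groupable cover of $X$; and finally assemble an interval partition and pattern $y'$ witnessing meagerness of $X+M$ via Bartoszy\'nski, where $y'$ on each coarse block is chosen so that the bitwise sum $(x+z)\rest I\neq y'\rest I$ whenever $x\rest I$ matches one of the finitely many admitted cover-patterns and $z\rest I\neq y\rest I$.

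I expect $(\Rightarrow)$ to be the principal obstacle: aligning the ``$\fmany k$ the cover catches $x$'' clause with the ``$\fmany k$ the element $z$ disagrees with $y$ on $I_k$'' clause into a single $\fmany$-clause for every element of $X+M$ requires a careful nesting of the groupable blocks inside the Bartoszy\'nski intervals. This is exactly where the $\gamma$-groupable strengthening---the sole feature distinguishing \ssmz{} from \smz---does the work, since without the grouping one would only get ``$\emany k$'' behavior, which is insufficient for the Bartoszy\'nski witness of $X+M$.
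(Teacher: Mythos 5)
Your plan is essentially the route of the cited source \cite{Zin-prism} (via Bartoszy\'nski's interval characterization of meager sets and the Shelah--Pawlikowski characterization of meager-additivity), but it is \emph{not} the route of the present paper, which obtains the statement as the special case $\grG=\Cset$ of Theorem~\ref{main3a}. There the two implications are proved by quite different means: $\ssmz\Rightarrow$ meager-additive comes from a topological fusion argument (Theorem~\ref{thm:action}, building a dense set of points avoiding $\phi(\widehat S\times M)$ via nested compact balls, with $\gamma$-groupability used only to make $\widehat S$ \si compact with $\widehat S\times M\subs\bigcup_n E_n\times P_n$), while meager-additive $\Rightarrow\ssmz$ uses canonical meager sets $H(f,x,c)$ built from grids, the key Lemma~\ref{polish5}, and the upper-Hausdorff-measure characterization of $\ssmz$ (Theorem~\ref{basicUhnull}). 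Your approach buys a self-contained combinatorial proof tied to $\Cset$; the paper's buys generality (locally compact \emph{tsi} groups) at the cost of heavier machinery.

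Two places in your sketch need repair. First, in the direction meager-additive $\Rightarrow\ssmz$: the set $\{z:z\rest I_j=y\rest I_j\}$ produced by the Shelah--Pawlikowski clause has diameter $1$ (coordinates below $\min I_j$ are unconstrained), so it is not itself a small cover element; you must split it into the $2^{\min I_j}$ basic cylinders of diameter $2^{-\max I_j-1}$ and choose the partition $\seq{I_k}$ so fast-growing relative to $\seqeps$ that this multiplicity is affordable. Without that quantitative clause your ``local patterns'' reformulation of $\ssmz$ is not the right equivalence. Second, in the direction $\ssmz\Rightarrow$ meager-additive, defining $y'$ on a \emph{single} coarse block so that $(x+z)\rest I\neq y'\rest I$ for \emph{all} admitted patterns simultaneously is impossible; each pattern $t$ forces $y'\rest I=t+y\rest I$, and distinct $t$'s conflict. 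The fix is to allocate a separate Bartoszy\'nski interval to each cover \emph{element}: taking $\eps_n\leq 2^{-n_{n+1}}$ so that the $n$-th cylinder $[s_n]$ determines $x$ on $[n_n,n_{n+1})$, set $y'\rest[n_n,n_{n+1})=s_n\rest[n_n,n_{n+1})+y\rest[n_n,n_{n+1})$ and let the coarse blocks of the meagerness witness for $X+M$ be the unions $\bigcup_{n\in I_j}[n_n,n_{n+1})$ over the cover's witnessing groups $I_j$. Allocating per block-pattern instead of per element runs into a circularity (the group sizes, hence the number of patterns per block, are only known after $\seqeps$ is fixed), which the per-element indexing avoids. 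With these two adjustments your argument goes through.
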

With some effort, the theorem was shown to hold
also in $\Rset$ and Euclidean spaces.
In the present paper we attempt to extend it to a wider class of Polish groups.

The inspiration comes from recent results on strong measure zero:
Kysiak \cite{kysiak} and Fremlin \cite{fremlin5} showed that
the Galvin-Mycielski-Solovay Theorem holds in all locally compact Polish groups.
Hru\v s\'ak, Wohofsky and Zindulka~\cite{MR3453581}
and Hru\v s\'ak and Zapletal~\cite{MR3707641} found, roughly speaking, that under the
Continuum Hypothesis the Galvin--Mycielski--Solovay Theorem fails
for groups that are not locally compact.

This of course raises questions about the scope of the above theorem.
Does it hold for locally compact metric groups?
Does it consistently fail for other Polish groups?
We give partial answers to the former questions; to date, the latter remains a mystery.

It turns out that in addition to sharp measure zero and meager-additive
sets it is handy to consider a subfamily of meager-additive sets, the so called \emph{sharply meager-additive} sets
(see Section \ref{sec:sharply} for the definition).

The following are the main results of the paper.

\begin{thm}\label{thm1}
Let $\grG$ be a Polish group equipped with a right-invariant metric.
Then every sharply $\MM$-additive set in $\grG$ is $\ssmz$.
\end{thm}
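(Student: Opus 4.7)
The plan is to adapt the strategy of Galvin, Mycielski and Solovay in the refined form that proves the analogous theorem on $\Cset$, with the extra ingredient that right-invariance of the metric makes right-translations isometries and hence preserves diameters of balls.

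Given a sequence $\seqeps$ of positive reals, the first step is to build a meager set $M \subs \grG$ that encodes $\seqeps$ combinatorially in a form the definition of sharply $\MM$-additive can digest. Concretely, I would fix a partition of $\omega$ into finite consecutive blocks $I_0 < I_1 < \dots$ (thinning $\seqeps$ to a subsequence if necessary) and, for each $k$, choose a countable family of open balls of diameter at most $\eps_n$, $n \in I_k$, whose union $H_k$ is dense in $\grG$. Then $H = \bigcap_k H_k$ is a dense $G_\delta$ and $M = \grG \setminus H$ is a meager $F_\sigma$ tailored to the $\gamma$-groupable structure.

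The second step is to apply the sharply $\MM$-additive property of $X$ to $M$. Whereas ordinary meager-additivity would only give an element $a \in \grG$ with $Xa \cap M = \emptyset$, sharp meager-additivity is strictly stronger: it should deliver $a$ together with the guarantee that, for every $x \in X$ and all but finitely many $k$, $xa$ belongs to one of the designated balls indexed by $I_k$. Using right-invariance to translate these balls back by $a^{-1}$ on the right, one obtains open sets $V_n \subs \grG$ with $\diam V_n \leq \eps_n$ such that for every $x \in X$ and almost every $k$, $x \in \bigcup_{n \in I_k} V_n$. This is exactly a $\gamma$-groupable $\seqeps$-cover of $X$, so $X$ is $\ssmz$.

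The main obstacle I expect is coordinating the first and second steps in the generality of an arbitrary Polish group. In $\Cset$ the clopen cylinder structure makes the required meager $M$ transparent and matches the combinatorial definition of sharply meager-additive almost automatically. In a general (possibly non-locally-compact) Polish group with right-invariant metric, finite unions of $\eps_n$-balls need not be dense, forcing the use of countably many balls inside each block $I_k$; one must then arrange the construction so that the sharply $\MM$-additive property still produces a selection falling into finitely many designated balls per block almost always. Extracting from sharp meager-additivity such a uniform groupable selection, and showing that right-invariance of the metric is sufficient to preserve diameters through the translation in the third step, is the technical heart of the proof.
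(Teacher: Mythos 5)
There is a genuine gap, and you have in fact pointed at it yourself: the ``technical heart'' you defer --- extracting from sharp meager-additivity a selection that falls into \emph{finitely} many designated balls per block almost always --- is precisely the content of the theorem, and your argument does not supply it. Two specific problems. First, you never use the actual definition of sharply $\MM$-additive given in this setting, namely that for every meager $M$ there is a \emph{\si compact} set $K\sups X$ with $MK$ meager. Instead you treat sharp meager-additivity as if it already came with the Shelah--Pawlikowski-style combinatorial guarantee familiar from $\Cset$ (``for all but finitely many $k$, $xa$ lands in one of finitely many designated balls of block $I_k$''); but in a general Polish group that combinatorial form is not available, and deriving anything like it is what one is trying to prove. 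Second, running the Prikry argument with your meager set $M=\grG\setminus\bigcap_k H_k$ only yields a point $z$ with $K\subs Hz$, i.e.\ membership in \emph{countably} many balls per block, which gives $\smz$ but not a $\gamma$-groupable cover; the passage from countable to finite per block is exactly where the \si compactness of $K$ must enter, and your sketch never invokes it.

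The paper's proof sidesteps the combinatorics entirely by going through fractal measures. By Theorem~\ref{basicUhnull} it suffices to show $\uhm^g(X)=0$ for every gauge $g$. One takes a countable dense symmetric set $Q$, a symmetric $G_\delta$ set $H\sups Q$ with $\hm^g(H)=0$ (by $G_\delta$-regularity of Hausdorff measures), and the meager set $M=\grG\setminus H$. Sharp $\MM$-additivity produces a \si compact $K\sups X$ and $z\notin MK$, whence $K\subs Hz$. Right-invariance makes $x\mapsto xz$ an isometry, so $\hm^g(Hz)=\hm^g(H)=0$; and Lemma~\ref{lem1} --- the identity $\uhm^g=\hm^g$ on \si compact sets --- converts this into $\uhm^g(X)\leq\uhm^g(K)=\hm^g(K)\leq\hm^g(Hz)=0$. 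That last identity is the substitute for your missing finite-per-block selection. If you wish to keep a purely covering-theoretic argument you would have to use the compact pieces of $K$ to extract finite subcovers from each $H_k z$ and then re-index them against the prescribed $\seqeps$, which is considerably more delicate than your sketch suggests; as written, the proposal does not constitute a proof.
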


\begin{thm}\label{thm2}
Let $\grG$ be a locally compact Polish group and $S\subs\grG$.
Then $S$ is $\ssmz$ if and only if $S$ is sharply $\MM$-additive.
\end{thm}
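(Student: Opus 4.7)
The plan is to combine Theorem~\ref{thm1} with a converse argument tailored to the locally compact case. For the implication from sharp $\MM$-additivity to $\ssmz$, note that every locally compact Polish group admits a compatible two-sided invariant (in particular right-invariant) metric by Struble's theorem, so this direction is immediate from Theorem~\ref{thm1}.

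The real content is the converse. My approach would begin by exploiting local compactness to extract uniform combinatorial data from any meager set. Given $M\subs\grG$ written as $M=\bigcup_k F_k$ with each $F_k$ closed and nowhere dense, and given a compact symmetric neighborhood $K$ of the identity with a decreasing base $V_n\searrow\{e\}$ of compact neighborhoods, I would show that for each $k$ and each sufficiently small $V_n$ there is a finite set of translates $\{g_{k,n,i}\}\subs\grG$ with the property that every translate of $V_n$ meeting $K$ can be shifted by some $g_{k,n,i}$ off $F_k$. This uniform finite selection is the feature that fails in non-locally-compact groups and is the key payoff of our hypothesis.

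Now suppose $S\subs\grG$ is $\ssmz$. The definition supplies, for any prescribed sequence $\seqeps$, a $\gamma$-groupable cover $\{U_n\}$ of $S$ with $\diam U_n\leq\eps_n$. I would choose $\seqeps$ in tandem with the base $V_n\searrow\{e\}$ so that each $U_n$ lies inside a translate of $V_n$, and so that the blocks $\mathcal U_1,\mathcal U_2,\dots$ witnessing $\gamma$-groupability can be put in correspondence with the enumeration of the closed nowhere dense pieces $F_k$. Using the finite gap data from the previous step, I would then make a diagonal selection of translations---one adjustment per block---to produce the combinatorial witness required by the definition of sharp $\MM$-additivity: $\gamma$-groupability guarantees that every point of $\grG$ is eventually covered by the selected translates, and the gap data guarantees that these translates simultaneously avoid $F_k$ for all sufficiently large $k$. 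Together these show that $S\cdot M$ lies in a meager set produced by the protocol that defines sharp additivity.

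The main obstacle is this final synchronization: the $\gamma$-groupable structure of the cover of $S$ must be aligned with the layered $F_\sigma$ structure of $M$, and the finite packing estimates must be uniform enough on the compact neighborhood $K$ to allow a single coherent choice of translates to work for all $F_k$ at once. This is precisely where local compactness is essential, and in view of the results of Hru\v s\'ak--Wohofsky--Zindulka~\cite{MR3453581} and Hru\v s\'ak--Zapletal~\cite{MR3707641} one cannot expect the argument to survive outside the locally compact setting without qualitatively new ideas.
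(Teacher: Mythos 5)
Your easy direction is essentially correct, but for the wrong reason: Struble's theorem provides a \emph{left}-invariant proper metric on a locally compact second countable group, not a two-sided invariant one (the $ax+b$ group is locally compact Polish and admits no compatible bi-invariant metric). Fortunately only right-invariance is needed: every Polish group carries a compatible right-invariant metric by Birkhoff--Kakutani, and since $\ssmz$ in a locally compact group is independent of the choice of metric (Proposition~\ref{indep1}), Theorem~\ref{thm1} indeed gives that sharply $\MM$-additive implies $\ssmz$.

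The converse, which is the real content, remains a plan with concrete gaps rather than a proof. First, sharp $\MM$-additivity requires producing a \si compact set $\widehat S\sups S$ with $M\widehat S$ meager; you never construct $\widehat S$, and your conclusion is phrased in terms of $S\cdot M$, which is not the assertion to be proved. In the paper's argument $\widehat S$ is assembled from the closed, totally bounded pieces $E_n$ of the $\gamma$-groupable cover via the witnessing groups, arranged so that $\widehat S\times M\subs\bigcup_n E_n\times P_n$. Second, your uniform selection statement --- finitely many translates $g_{k,n,i}$ shifting each translate of $V_n$ off $F_k$ --- is not the combinatorial engine that is needed. The correct statement (Lemma~\ref{comp}, i.e.\ Lemma~6.2 of Hru\v s\'ak--Zapletal) is an $\eps$--$\del$ selection: for every $\eps>0$ there is $\del>0$ such that for every $x$ in a fixed compact set and every $y$, some ball $B(z,\del)\subs B(x,\eps)$ misses $\phi(B(y,\del)\times P)$. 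This uniformity over $y$ (not merely a finite list of correcting translates) is what allows one, for each basic open set $U_k$, to run a recursion producing nested compact balls shrinking to a point $x^k\notin\bigcup_n\phi(E_n\times P_n)$, hence $x^k\notin M\widehat S$. Third, you give no reason why the resulting set is meager: ``every point of $\grG$ is eventually covered by the selected translates'' is not a route to meagerness. The paper concludes by observing that $M\widehat S$ is $F_\sigma$ (a product of a closed set and a compact set is closed, and both $\widehat S$ and $M$ are \si compact) and disjoint from the dense set $\{x^k:k\in\Nset\}$. Without the construction of $\widehat S$, the correct form of the selection lemma, and the density-plus-$F_\sigma$ argument, the proposed ``diagonal selection of translations'' does not yield the theorem.
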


\begin{thm}\label{thm3}
Let $\grG$ be a Polish group admitting an invariant metric.
If $X\subs\grG$ is $\MM$-additive, then $X$ is $\ssmz$ (in any metric on $\grG$).
\end{thm}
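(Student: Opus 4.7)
The plan is a direct Galvin--Mycielski--Solovay argument exploiting the full two-sided symmetry of an invariant metric. Fix a compatible invariant metric $d$ on $\grG$; this ensures that left multiplication, right multiplication, and inversion are all $d$-isometries, so diameters are preserved under all three operations. Let $X\subs\grG$ be $\MM$-additive and let $\Seqeps$ be given; the goal is to produce a $\gamma$-groupable cover of $X$ by sets of diameters at most $\eps_n$.

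The core of the argument runs as follows. First, construct open sets $V_n\subs\grG$ with $\diam V_n\leq\eps_n$ together with a partition $\seq{I_k}$ of $\Nset$ into finite intervals such that the set
\begin{equation*}
 M^\star := \grG\setminus\liminf_k\,\bigcup_{n\in I_k}V_n
\end{equation*}
is meager. By $\MM$-additivity, $X\cdot M^\star$ is meager and hence proper, so pick $g\in\grG\setminus X\cdot M^\star$. Every $x\in X$ then satisfies $x^{-1}g\in\liminf_k\bigcup_{n\in I_k}V_n$, so for cofinitely many $k$ there is $n\in I_k$ with $x\in gV_n^{-1}$. Two-sided invariance of $d$ yields $\diam gV_n^{-1}=\diam V_n^{-1}=\diam V_n\leq\eps_n$, hence $\seq{gV_n^{-1}}$ is a $\gamma$-groupable cover of $X$ of the required diameters, witnessing $\ssmz$ with respect to $d$.

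The main obstacle is the construction of $M^\star$. In a locally compact group (the setting of Theorem~\ref{thm2}), compactness of bounded sets allows finitely many $V_n$'s to cover a bounded region, making each $\bigcup_{n\in I_k}V_n$ dense-open and hence $M^\star$ meager. In the general Polish case a bounded set need not be totally bounded (consider $\Zset^{\Nset}$), so the naive finite-cover construction fails. After reducing to a bounded $X$ by inheritance of $\MM$-additivity to subsets and $\sigma$-additivity of $\ssmz$, one must build $M^\star$ locally on bounded regions, clustering the finite intervals $I_k$ so that $\bigcup_{n\in I_k}V_n$ is dense where needed; two-sided invariance is essential to transport the local construction by translation. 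An alternative route is via Theorem~\ref{thm1}: first prove that, under a two-sided invariant metric, every $\MM$-additive set is sharply $\MM$-additive, and then invoke Theorem~\ref{thm1} to conclude $\ssmz$; this reduction again leans on the symmetry afforded by left-\emph{and}-right invariance, which is precisely what distinguishes the hypothesis of Theorem~\ref{thm3} from that of Theorem~\ref{thm1}.

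For the ``in any metric'' clause, the combinatorial skeleton of covering witnesses produced above is essentially topological, so the construction can be re-run in any compatible metric on $\grG$: one transfers $\ssmz$ between metrics by iterating the construction against the new diameter gauge, or by a uniform continuity argument on bounded pieces where any two compatible metrics are mutually uniformly comparable.
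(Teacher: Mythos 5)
There is a genuine gap, and it sits exactly where you place your ``main obstacle'': the meager set $M^\star=\grG\setminus\liminf_k\bigcup_{n\in I_k}V_n$ you need cannot exist when $\grG$ is not locally compact. Indeed, if $M^\star$ were meager, then $\bigcup_{N}\bigcap_{k\geq N}\bigcup_{n\in I_k}V_n$ would be comeager, so by the Baire category theorem some tail $\bigcap_{k\geq N}\bigcap\nolimits_{}\,\bigcup_{n\in I_k}V_n$ would be non-meager and hence comeager in some nonempty open set $U$; then for every $k\geq N$ the finite union $\bigcup_{n\in I_k}V_n$ would be dense in $U$, so $\clos{U}$ would be covered by finitely many sets of diameter at most $\max_{n\in I_k}\eps_n\to0$, i.e., $\clos{U}$ would be totally bounded and (the invariant metric being complete) compact, forcing $\grG$ to be locally compact. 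Working ``locally on bounded regions'' does not rescue this: in $\Zset^\Nset$ bounded sets are still not totally bounded, so the same computation applies. More conceptually, your argument uses only that $X\cdot M^\star\neq\grG$ for one well-chosen meager set -- the Galvin--Mycielski--Solovay mechanism -- and the paper's own citations (Hru\v s\'ak--Wohofsky--Zindulka, Hru\v s\'ak--Zapletal) show that under CH this mechanism genuinely fails in non-locally-compact \emph{tsi} groups. Your fallback route (first show $\MM$-additive implies sharply $\MM$-additive, then quote Theorem~\ref{thm1}) is also not available: that implication is only established via the locally compact machinery of Theorem~\ref{thm2}, and for non-locally-compact groups it is essentially among the paper's open questions.

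The paper's proof is forced to exploit the full hypothesis that $M+X$ is \emph{meager} (not merely proper) for \emph{every} meager $M$. It builds a grid of separated finite sets $Q^*_k$ and canonical meager sets $H(f,x,c)$ defined as the points eventually far from the translates $Q^*_{f(n)}+x$; Lemma~\ref{polish4} shows these are cofinal among meager sets in a strong, translation-parametrized sense, so meagerness of $H(f,0,\tfrac14)+X$ yields an inclusion $H(f,x,\tfrac14)\subs H(f\circ g,y,1)$ for all $x\in X$ simultaneously. The combinatorial key Lemma~\ref{polish5} then converts this inclusion into the statement that each $x\in X$ is eventually captured by one of finitely many balls $B(-p+q+y,\eps_{f(k+1)})$, which is precisely a $\gamma$-groupable cover; the gauge condition \eqref{b4} controls $\sum h(\diam G)$ and Lemma~\ref{gammagr} plus Theorem~\ref{basicUhnull} give $\ssmz$. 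If you want to salvage your approach, the step you must replace is the construction of a single ``test'' meager set by a family of them indexed by translations, together with a lemma extracting covering information from containment between two such sets -- that is where all the work in the paper lies.
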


As a corollary to these theorems, we get an equivalence of the three properties in
locally compact groups admitting an invariant metric. This class of groups
includes all compact or abelian Polish groups.

\begin{thm}\label{thm4}
Let $\grG$ be a locally compact group admitting an invariant metric.
Let $X\subs\grG$. The following are equivalent:
\begin{enum}
\item $X$ is $\MM$-additive,
\item $X$ is sharply $\MM$-additive,
\item $X$ is $\ssmz$.
\end{enum}
\end{thm}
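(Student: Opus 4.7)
The plan is to obtain Theorem \ref{thm4} as an immediate corollary, by closing the cycle of implications (ii) $\Rightarrow$ (i) $\Rightarrow$ (iii) $\Rightarrow$ (ii), using the three preceding theorems together with the fact that sharp $\MM$-additivity is, by definition, a strengthening of $\MM$-additivity.

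First I would dispatch (ii) $\Rightarrow$ (i) by pure bookkeeping: since sharply $\MM$-additive sets are introduced (in the forthcoming Section~\ref{sec:sharply}) as a distinguished subfamily of $\MM$-additive sets, no real argument is needed. Next, for (i) $\Rightarrow$ (iii), I would invoke Theorem \ref{thm3}. Its hypothesis is that $\grG$ is a Polish group admitting an invariant metric, which is exactly what Theorem \ref{thm4} assumes—local compactness is not required for this step, it is simply additional free information.

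For the remaining implication (iii) $\Rightarrow$ (ii), I would appeal to Theorem \ref{thm2}, whose statement gives the biconditional $\ssmz\Leftrightarrow$ sharply $\MM$-additive under the sole hypothesis that $\grG$ is a locally compact Polish group. Since the group in Theorem \ref{thm4} is locally compact (and Polish), this direction follows at once. Combined with the two previous steps, the cycle is closed and the three properties coincide.

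There is no genuine obstacle: everything substantive has been absorbed into Theorems \ref{thm2} and \ref{thm3}, and the only role of Theorem~\ref{thm4} is to observe that on the intersection of their hypotheses—locally compact Polish groups with an invariant metric, a class that comprises all compact Polish groups and all abelian Polish groups—the a priori distinct notions of $\MM$-additivity, sharp $\MM$-additivity, and $\ssmz$ collapse to a single one. If any subtlety arises, it will be merely verifying that ``admits an invariant metric'' in the hypothesis of Theorem~\ref{thm4} entails the ``right-invariant metric'' and ``invariant metric'' hypotheses of Theorems~\ref{thm1} and~\ref{thm3}, respectively, which is immediate since an invariant metric is both left- and right-invariant.
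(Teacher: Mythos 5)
Your proposal is correct and coincides with the paper's own derivation: the paper obtains this theorem (stated as Theorem~\ref{main3a}) as an immediate consequence of Theorems~\ref{polish1} and~\ref{thm:action3}, with the implication from sharp $\MM$-additivity to $\MM$-additivity being definitional, exactly as you argue. Nothing further is needed.
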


Consequences include, for instance:
\begin{itemyze}
  \item Meager-additive sets are preserved by continuous mappings
  between groups (Theorem~\ref{mapping22}).
  \item A product of two meager-additive sets is meager-additive
  (Theorem~\ref{cart}).
  \item Meager-additive sets are universally meager and transitively meager
  (Propositions~\ref{umg} and \ref{trans}).
  \item All $\gamma$-sets are meager-additive (Proposition \ref{gamma}).
\end{itemyze}

We also calculate the uniformity number of the ideal of meager-additive sets
in any Polish group admitting an invariant metric (Theorem \ref{invariants}).

\medskip

The paper is organized as follows. In Section \ref{sec:ssmz} we recall
elementary material on sharp measure zero and a few technical facts.
In Section \ref{sec:sharply} we introduce sharply meager-additive sets
and prove Theorem \ref{thm1}. In Section \ref{sec:loccpt} we show
that in locally compact groups the classes of sharp measure zero and sharply
meager-additive sets coincide, i.e., we prove Theorem \ref{thm2}.
In Section \ref{sec:polish} Theorems \ref{thm3} and \ref{thm4} are proved.
In Section \ref{sec:conseq} we derive the aforementioned consequences and in Section
\ref{sec:uniformity} we calculate the uniformity number of meager-additive sets.
The last section lists some open problems that we consider interesting.

\medskip

Some common notation used throughout the paper includes $\abs{A}$ for
the cardinality of a set $A$, $\Nset$ for the set of natural numbers,
$\Pset$ for the family of all sequences of natural numbers.
We also write $E_n\upto E$ to denote that $\seq{E_n}$ is an increasing sequence
of sets with union $E$.
All metric spaces and topological groups we consider are separable.
The diameter of a set $E$ in a metric space is denoted $\diam E$.
A closed ball of radius $r$ centered at $x$ is denoted by $B(x,r)$.

%%%%%%%%%%%%%%%%%%%%%%%%%%%%%%%%%%%%%%%%%%%%%%%%%%%%%%%%%%%%%%%%%
%%%%%%%%%%%%%%%%%%%%%%%%%%%%%%%%%%%%%%%%%%%%%%%%%%%%%%%%%%%%%%%%%
\section{Sharp measure zero: review}\label{sec:ssmz}
%%%%%%%%%%%%%%%%%%%%%%%%%%%%%%%%%%%%%%%%%%%%%%%%%%%%%%%%%%%%%%%%%
%%%%%%%%%%%%%%%%%%%%%%%%%%%%%%%%%%%%%%%%%%%%%%%%%%%%%%%%%%%%%%%%%

In this section we review a few facts from the theory of sharp measure zero,
as developed in~\cite{Zin-prism}. There are many equivalent definitions
listed there. We will make use of two of them.

Let's recall once again that by Borel's definition~\cite{MR1504785}
a metric space $X$ has
\emph{strong measure zero} (abbreviated as $\smz$) if
for every sequence $\seqeps$ of positive reals there is a cover
$\seq{U_n}$ of $X$ such that $\diam U_n\leq\eps_n$ for all $n$.
Our first goal is to describe sharp measure zero in a way that parallels
this definition.

We need to recall a few notions regarding covers.
Let $\seq{U_n}$ be a sequence of subsets of a set $X$.
\begin{itemyze}
\item
A sequence of sets $\seq{U_n}$ is called
a \emph{$\gamma$-cover} if each $x\in X$ belongs to all but finitely
many $U_n$.
\item
A sequence of sets $\seq{U_n}$ is called a \emph{$\gamma$-groupable cover}
if there is a partition $\Nset=I_0\cup I_1\cup I_2\cup\dots$
into consecutive finite intervals (i.e., $I_{j+1}$ is on the right of $I_j$ for all $j$)
such that the sequence $\seq{\bigcup_{n\in I_j}U_n:j\in\Nset}$ is a $\gamma$-cover.
The partition $\seq{I_j}$ will be occasionally called \emph{witnessing}
and the finite families $\{U_n:n\in I_j\}$
will be occasionally called \emph{witnessing groups}.
\end{itemyze}

\begin{defn}
A metric space $X$ has \emph{sharp measure zero} (abbreviated as $\ssmz$) if
for every sequence $\seqeps$ of positive reals there is a $\gamma$-groupable cover
$\seq{U_n}$ of $X$ such that $\diam U_n\leq\eps_n$ for all $n$.
\end{defn}

For a metric space $X$, the family of all $\ssmz$ subsets of $X$ will be denoted
$\ssmz(X)$.

Sharp measure zero is a \si additive property and it is preserved
by uniformly continuous maps:
\begin{prop}[{\cite[1.2]{Zin-prism}}]\label{trivUH}
\begin{enum}
\item
For a metric space $X$, the family $\ssmz(X)$ is a \si ideal.
\item
If $f:X\to Y$ is a uniformly continuous mapping and $S\subs X$ is $\ssmz$, then
$f(S)$ is $\ssmz$.
\end{enum}
\end{prop}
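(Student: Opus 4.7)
The plan is to dispatch (ii) first, where uniform continuity carries the structure across $f$, and then tackle (i), whose substance is closure of $\ssmz(X)$ under countable unions.

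For (ii), given $\seqeps$ in $Y$, uniform continuity of $f$ furnishes a sequence $\seq{\delta_n}$ of positive reals with $\diam A\leq\delta_n\Rightarrow\diam f(A)\leq\eps_n$. Feeding $\seq{\delta_n}$ into the $\ssmz$-property of $S$ produces a $\gamma$-groupable cover $\{U_n\}$ of $S$ with witnessing partition $\seq{I_j}$ and $\diam U_n\leq\delta_n$. The family $\{f(U_n\cap S)\}$ then covers $f(S)$, respects the new diameter bounds, and is $\gamma$-groupable with the same partition $\seq{I_j}$, since $f$ commutes with unions and forward images of $\gamma$-covers of $S$ are $\gamma$-covers of $f(S)$.

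For (i), closure under subsets is immediate: intersecting a $\gamma$-groupable cover of a superset with the subset gives a $\gamma$-groupable cover of the subset with no increase in diameters. The content is closure under countable unions. Let $S=\bigcup_k S_k$ with each $S_k\in\ssmz(X)$ and fix $\seqeps$. The strategy is to reserve an infinite ``track'' $N_k\subs\Nset$ of indices for $S_k$ via a partition $\Nset=\bigsqcup_k N_k$ into infinite sets, let $\phi_k\colon\Nset\to N_k$ be the increasing enumeration, and apply $\ssmz(S_k)$ to the thinned sequence $\seq{\eps_{\phi_k(m)}}_m$. This yields a $\gamma$-groupable cover $\{W^{(k)}_m\}_m$ of $S_k$ with $\diam W^{(k)}_m\leq\eps_{\phi_k(m)}$ and witnessing partition $\seq{G^{(k)}_j}_j$; setting $U_n=W^{(k)}_{\phi_k^{-1}(n)}$ for $n\in N_k$ assembles a cover $\{U_n\}$ of $S$ with $\diam U_n\leq\eps_n$.

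The main obstacle is that the transported groups $\phi_k(G^{(k)}_j)\subs N_k$ are in general not intervals of $\Nset$, so a single witnessing partition of $\Nset$ into consecutive intervals must be carved to engulf all of them simultaneously. I would build $\seq{I_\ell}$ inductively: having fixed a right endpoint $n_{\ell-1}$, for each $k\leq\ell$ choose $j^{(k)}_\ell>j^{(k)}_{\ell-1}$ so large that $\phi_k(G^{(k)}_{j^{(k)}_\ell})\subs[n_{\ell-1},\infty)$, which is possible because $\phi_k(m)\to\infty$ and the $G^{(k)}_j$ march to infinity. Then let $n_\ell$ exceed the maxima of these finitely many transported groups and set $I_\ell=[n_{\ell-1},n_\ell)$. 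Each $I_\ell$ then contains a full transported group $\phi_k(G^{(k)}_{j^{(k)}_\ell})$ for every $k\leq\ell$; since $j^{(k)}_\ell\to\infty$ with $\ell$, any $x\in S_k$ eventually lies in $\bigcup_{n\in\phi_k(G^{(k)}_{j^{(k)}_\ell})}U_n\subs\bigcup_{n\in I_\ell}U_n$. Thus $\seq{I_\ell}$ witnesses $\gamma$-groupability of $\{U_n\}$ and closes the union step.
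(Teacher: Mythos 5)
Your argument is correct. The paper itself does not prove this proposition --- it is quoted from \cite[1.2]{Zin-prism} --- so there is no in-text proof to compare against, but what you give is the natural direct argument and it goes through. Part (ii) is as routine as you make it (modulo the usual $\leq$ versus $<$ fiddling in the choice of $\delta_n$). In part (i) you correctly isolate the only real issue, namely that after distributing the indices of $\seqeps$ over infinite tracks $N_k$ and pulling back the covers of the $S_k$, the witnessing groups land on non-interval subsets of $\Nset$; your inductive coarsening, which makes the $\ell$-th interval swallow one whole transported group for each $k\leq\ell$, is exactly the right fix, and the verification that every $x\in S_k$ is caught by all but finitely many $\bigcup_{n\in I_\ell}U_n$ is sound (the finitely many exceptional $\ell<k$ are harmless). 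For contrast, note that the paper's own machinery offers a slicker route to the \si ideal property: by Theorem~\ref{basicUhnull} a space is \ssmz{} iff $\uhm^h$ vanishes for every gauge $h$, and $\uhm^h$ is an outer measure by the Method~I construction, so countable subadditivity is immediate; your combinatorial proof has the advantage of not presupposing that characterization.
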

The latter yields another property of $\ssmz$ that is worth mentioning:
$\ssmz$ is a \emph{uniform} property: it depends only on the uniformity
induced by a metric, not the metric itself.
Also, it is not a topological property; we prove that in Remark~\ref{nottop}.

\begin{lem}[{\cite[3.4)]{Zin-prism}}]\label{totbd}
\begin{enum}
\item Every $\ssmz$ set admits a countable cover by totally bounded sets.
\item In particular, every \ssmz{} set $S$ in a complete metric space
is contained in a \si compact set.
\end{enum}
\end{lem}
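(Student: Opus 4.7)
The plan is to exploit the $\gamma$-groupability in the definition of $\ssmz$ with a vanishing sequence of diameters, and then stratify $S$ into ``tails of the witnessing $\gamma$-cover''. The key observation is that the two finiteness conditions needed for total boundedness (finitely many pieces per scale; arbitrarily small scales) correspond exactly to the two finiteness features of a $\gamma$-groupable cover of controlled diameters (each witnessing group $I_j$ is finite; the sequence $\eps_n$ can be chosen to tend to $0$).

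For (i), I would fix any sequence $\seqeps$ of positive reals with $\eps_n\to 0$ and invoke the definition of $\ssmz$ to obtain a $\gamma$-groupable cover $\seq{U_n}$ of $S$ with $\diam U_n\leq\eps_n$ and a witnessing partition $\Nset=I_0\cup I_1\cup\dots$ into consecutive finite intervals. Setting $V_j=\bigcup_{n\in I_j}U_n$, the sequence $\seq{V_j}$ is a $\gamma$-cover of $S$. Define
$$S_k=\{x\in S:x\in V_j\text{ for all }j\geq k\}.$$
The $\gamma$-cover property immediately yields $S=\bigcup_k S_k$. To verify that each $S_k$ is totally bounded, fix $\delta>0$: since the $I_j$ are consecutive finite intervals, $\min I_j\to\infty$, so for every sufficiently large $j\geq k$ all $n\in I_j$ satisfy $\diam U_n\leq\eps_n<\delta$. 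Because $S_k\subs V_j$, the finite family $\{U_n:n\in I_j\}$ is then a cover of $S_k$ by sets of diameter less than $\delta$, which is precisely the defining property of total boundedness.

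Part (ii) follows at once: in a complete metric space the closure of a totally bounded set is compact, so $\bigcup_k\overline{S_k}$ is a \si compact set containing $S$. I do not foresee a serious obstacle; the only mild subtlety is recognizing that the inclusion $S_k\subs V_j$ for $j\geq k$, baked into the definition of $S_k$, is exactly what converts a single witnessing group into a finite $\delta$-net for $S_k$.
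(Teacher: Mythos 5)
Your argument is correct, and since the paper only cites this lemma from an external source without reproducing a proof, there is nothing to contrast it with; your route — choosing $\eps_n\to 0$, stratifying $S$ into the tails $S_k$ of the witnessing $\gamma$-cover, and using a single finite witnessing group $\{U_n:n\in I_j\}$ with $j$ large as a finite cover of $S_k$ at scale $\delta$ — is exactly the natural argument, and part (ii) follows as you say because closures of totally bounded sets in a complete space are compact.
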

This lets us relax the uniform continuity condition in Proposition~\ref{trivUH}(ii):

\begin{prop}\label{mapping1}
Let $X$ be a complete or \si compact metric space and
$f:X\to Y$ a continuous mapping. If $S\subs X$ is $\ssmz$, then so is
$f(S)$.
\end{prop}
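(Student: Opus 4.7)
The plan is to reduce the statement to the uniformly continuous case via Proposition~\ref{trivUH}(ii). The key observation is that on a compact metric space, every continuous map is uniformly continuous, so restricting $f$ to a compact set turns it into a uniformly continuous map to which the preceding results apply directly.

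First, I would argue that in either hypothesis on $X$, the set $S$ can be written as a countable union $S=\bigcup_n S_n$ where each $S_n$ is contained in a compact subset $K_n$ of $X$. Indeed, if $X$ is \si compact, say $X=\bigcup_n K_n$ with each $K_n$ compact, we may simply take $S_n=S\cap K_n$. If $X$ is merely complete, then Lemma~\ref{totbd}(ii) provides a \si compact set $\bigcup_n K_n$ containing $S$, and again we set $S_n=S\cap K_n$. Each $S_n$ is a subset of the $\ssmz$ set $S$, hence is $\ssmz$ itself (the definition of $\ssmz$ is trivially hereditary).

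Next, for each $n$, the restriction $f\rest K_n:K_n\to Y$ is a continuous map on a compact metric space and is therefore uniformly continuous. Since $S_n\subs K_n$ and $S_n$ is $\ssmz$, Proposition~\ref{trivUH}(ii) applied to $f\rest K_n$ yields that $f(S_n)=(f\rest K_n)(S_n)$ is $\ssmz$.

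Finally, $f(S)=\bigcup_n f(S_n)$ is a countable union of $\ssmz$ sets, so Proposition~\ref{trivUH}(i) closes the argument. There is no real obstacle: the proof is essentially a routine assembly of the totally-bounded-cover lemma with the uniformly continuous image result, the only ingredient that needs to be noticed being the compactness-to-uniform-continuity upgrade on each $K_n$.
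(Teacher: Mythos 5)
Your proof is correct and follows essentially the same route as the paper: cover $S$ by compact sets $K_n$ (via Lemma~\ref{totbd} in the complete case, or trivially in the \si compact case), upgrade $f\rest K_n$ to uniform continuity by compactness, and conclude with Proposition~\ref{trivUH}(ii) and the \si ideal property (i). No gaps.
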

\begin{proof}
Lemma~\ref{totbd} yields, in either case, a countable cover $\{K_n\}$ of $S$ by
compact sets.
The maps $f\rest K_n$ are thus uniformly continuous for all $n\in\Nset$.
Hence $f(S\cap K_n)$ are $\ssmz$ by ~\ref{trivUH}(ii) and $f(S)$
is $\ssmz$ by~\ref{trivUH}(i).
\end{proof}

%%%%%%%%%%%%%%%%%%%%%%%%%%%%%%%%%%%%%%%%%%%%%%%%%%%%%%%%%%%%%%%%%
\subsection*{Hausdorff measures}
%%%%%%%%%%%%%%%%%%%%%%%%%%%%%%%%%%%%%%%%%%%%%%%%%%%%%%%%%%%%%%%%%

The other definition of $\ssmz$ that we will utilize
is set up in terms of Hausdorff measure and its modification.
We recall the definitions and basic facts.

A non-decreasing, right-continuous function $h:[0,\infty)\to[0,\infty)$
such that $h(0)=0$ and $h(r)>0$ if $r>0$ is called a \emph{gauge}.

If $\del>0$, a cover $\mc A$ of a set $E\subs X$ is termed
\emph{$\del$-fine} if $\diam A\leq\del$ for all $A\in\mc A$.
Let $h$ be a gauge.
For each $\del>0$ set
$$
  \hm^h_\delta(E)=
  \inf\left\{\sum_{n\in\Nset}h(\diam E_n):
  \text{$\{E_n\}$ is a countable $\delta$-fine cover of $E$}\right\}
$$
and define
$$
  \hm^h(E)=\sup_{\delta>0}\hm^h_\delta(E).
$$
The set function $\hm^h$ is called the \emph{$h$-dimensional Hausdorff measure}.
Properties of Hausdorff measures are well-known. The following, including
the two propositions, can be found e.g.~in~\cite{MR0281862}.
The restriction of $\hm^h$ to Borel sets is a $G_\del$-regular Borel measure.

\medskip

Sharp measure zero requires a slight modification of Hausdorff measure,
as it was introduced in~\cite{Zin-prism}.
Let $h$ be a gauge. For each $\delta>0$ set
$$
  \uhm^h_\delta(E)=
  \inf\left\{\sum_{n=0}^N h(\diam E_n):
  \text{$\{E_n\}$ is a \emph{finite} $\delta$-fine cover of $E$}\right\}.
$$
Note that the only difference is that only finite covers are taken in account,
as opposed to countable covers in the definition of $\hm^h_\delta(E)$.
Then let
$$
  \uhm_0^h(E)=\sup_{\delta>0}\uhm^h_\del(E).
$$
It is easy to check that $\uhm_0^h$ is a finitely subadditive set function.
However, it is not a measure, since it need not be \si additive.
That is why another step is required: we need to apply the operation
known as Munroe's \emph{Method I construction}
(cf.~\cite{MR0053186} or \cite{MR0281862}):
$$
  \uhm^h(E)=\inf\Bigl\{\sum_{n\in\Nset}\uhm^h_0(E_n):
  E\subs\bigcup_{n\in\Nset}E_n\Bigr\}.
$$
Thus defined set function is indeed an outer measure whose restriction to
Borel sets is a Borel measure.
It is called the \emph{$h$-dimensional upper Hausdorff measure}.

Detailed information on the upper Hausdorff measure is provided in~\cite{Zin-prism},
see also~\cite{MR2957686}.
Here we only recall two properties that we will make use of.
\begin{lem}[{\cite[Lemma 3.4]{Zin-prism}}]\label{lem1}
Let $h$ be a gauge and $E$ a set in a metric space.
If $E$ is \si compact, then $\uhm^h(E)=\hm^h(E)$.
\end{lem}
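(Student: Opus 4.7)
The easy inequality $\hm^h(E) \leq \uhm^h(E)$ holds for every set $E$: every finite $\delta$-fine cover is also countable, so $\hm^h_\delta(E) \leq \uhm^h_\delta(E)$ and hence $\hm^h(E) \leq \uhm^h_0(E)$; since $\hm^h$ is countably subadditive, from any countable cover $E \subs \bigcup_n E_n$ we get $\hm^h(E) \leq \sum_n \hm^h(E_n) \leq \sum_n \uhm^h_0(E_n)$, and taking the infimum yields $\hm^h \leq \uhm^h$. So only the reverse inequality needs work.

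The plan is first to prove $\uhm^h(K) \leq \hm^h(K)$ for compact $K$, and then to lift this to \si compact sets via continuity from below. For compact $K$, I would fix $\delta,\eps > 0$ and start with a countable $\delta/2$-fine cover $\{E_n\}$ of $K$ satisfying $\sum_n h(\diam E_n) < \hm^h_{\delta/2}(K) + \eps$. The crucial step is to enlarge each $E_n$ to an open set $U_n$ (for instance, a small open neighbourhood of $E_n$) so that $\diam U_n \leq \delta$ and $h(\diam U_n) \leq h(\diam E_n) + \eps/2^{n+1}$; this is exactly where right-continuity of the gauge $h$, combined with $h(0)=0$, is used, and starting with a $\delta/2$-fine rather than merely $\delta$-fine cover gives enough slack in the diameter. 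Compactness of $K$ then extracts a finite subcover $\{U_{n_1},\dots,U_{n_k}\}$, which is a finite $\delta$-fine cover of $K$, whence
\[
  \uhm^h_\delta(K) \leq \sum_{i=1}^{k} h(\diam U_{n_i}) \leq \sum_n h(\diam U_n) \leq \hm^h_{\delta/2}(K) + 2\eps \leq \hm^h(K) + 2\eps.
\]
Letting $\delta,\eps \to 0^+$ gives $\uhm^h_0(K) \leq \hm^h(K)$, and since the one-element cover $\{K\}$ in the Method I definition yields $\uhm^h(K) \leq \uhm^h_0(K)$, the inequality $\uhm^h(K) \leq \hm^h(K)$ follows on compact sets.

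For \si compact $E$, write $E=\bigcup_n K_n$ with $K_n$ compact and $K_n \upto E$. Both $\hm^h$ and $\uhm^h$ restrict to Borel measures on Borel sets---the former noted in the excerpt, the latter being the very point of the Method I construction---so both are continuous from below on increasing Borel sequences, giving
\[
  \hm^h(E) = \lim_n \hm^h(K_n) = \lim_n \uhm^h(K_n) = \uhm^h(E).
\]
The main obstacle is the open-enlargement step in the compact case: one must exploit right-continuity of $h$ to control $h(\diam U_n)$ while simultaneously keeping $\diam U_n \leq \delta$, and the initial $\delta/2$-fine choice is precisely what provides the needed slack. Everything else---the compactness extraction, the Method I comparison, and the continuity from below of Borel measures---is routine.
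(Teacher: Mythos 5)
Your argument is correct, and since the paper gives no proof of this lemma (it only cites \cite[Lemma 3.4]{Zin-prism}), there is nothing to diverge from: your route --- proving $\uhm^h_0(K)=\hm^h(K)$ on compacta by enlarging a countable $\delta/2$-fine cover to open sets with controlled $h(\diam U_n)$ via right-continuity of $h$, extracting a finite subcover, and then passing to \si compact sets by continuity from below of the two Borel measures --- is exactly the standard argument one expects behind the citation. The only cosmetic point is your aside that Borel measurability of $\uhm^h$ is ``the very point of the Method I construction'': Method I alone does not yield this; it follows because $\uhm^h_0$ is additive on positively separated sets, so $\uhm^h$ is a metric outer measure --- but the paper explicitly records that $\uhm^h$ restricts to a Borel measure, so your use of it is legitimate.
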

\begin{lem}[{\cite[Lemma 3.9]{Zin-prism}}]\label{gammagr}
Let $h$ be a gauge and $E$ a set in a metric space. If
$E$ has a $\gamma$-groupable cover $\seq{U_n}$ such that
$\sum_{n\in\Nset}h(\diam U_n)<\infty$, then $\uhm^h(E)=0$.
\end{lem}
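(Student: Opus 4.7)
My plan is to cut $E$ into countably many pieces using the witnessing partition of the $\gamma$-groupable cover, show that each piece has $\uhm^h_0$ equal to zero, and then conclude by the Method I construction. This fits the general philosophy that a $\gamma$-groupable cover is essentially a sequence of \emph{finite} covers of tail sets, which is precisely the shape required for the definition of $\uhm^h_\delta$, where only finite $\delta$-fine covers are admitted.

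Concretely, let $\Nset=\bigcup_j I_j$ be the witnessing partition and set $V_j=\bigcup_{n\in I_j}U_n$, so that $\seq{V_j}$ is a $\gamma$-cover of $E$. For each $k\in\Nset$ define $E_k=\{x\in E:x\in V_j\text{ for all }j\geq k\}$; by the $\gamma$-cover property, $E=\bigcup_k E_k$. The main step is to show $\uhm^h_0(E_k)=0$ for each $k$. Given $\delta,\eps>0$, two observations control the relevant sums: summability of $\sum_n h(\diam U_n)$ supplies arbitrarily small tails, and combined with right-continuity of $h$ at $0$ it forces $\diam U_n\to 0$. So one can choose $M$ with $\sum_{n\geq M}h(\diam U_n)<\eps$ and $\diam U_n\leq\delta$ for all $n\geq M$, and then pick any $j\geq k$ whose interval $I_j$ lies entirely past $M$. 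The finite family $\{U_n:n\in I_j\}$ covers $E_k$ (because $E_k\subs V_j$), is $\delta$-fine, and has total $h$-mass below $\eps$. Hence $\uhm^h_\delta(E_k)<\eps$; letting $\eps\to 0$ and then taking the supremum over $\delta$ gives $\uhm^h_0(E_k)=0$.

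Feeding this into the Method I construction yields $\uhm^h(E)\leq\sum_k\uhm^h_0(E_k)=0$. I do not anticipate a real obstacle; the only subtle point, and the essential feature of the hypothesis, is that a \emph{single} witnessing group $\{U_n:n\in I_j\}$ already covers the tail set $E_k$, which is exactly what produces the required finite $\delta$-fine cover. A plain $\gamma$-cover without grouping would only give finite subcovers via compactness arguments unavailable for a general set $E$, so the grouping assumption really is doing the work.
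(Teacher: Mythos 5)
Your proof is correct and is exactly the intended argument for this lemma (which the paper only imports from \cite{Zin-prism}): a single witnessing group lying past a suitable tail is a finite $\delta$-fine cover of $E_k$ with $h$-mass below $\eps$, giving $\uhm^h_0(E_k)=0$, and the Method I construction applied to $E=\bigcup_k E_k$ finishes. One cosmetic remark: $\diam U_n\to 0$ follows already from $h$ being non-decreasing with $h(r)>0$ for $r>0$; right-continuity of $h$ plays no role there.
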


By a classical result of Besicovitch~\cite{MR1555386,MR1555389} (cf.~\cite{Zin-prism}
for detailed proofs), a metric space $X$ is $\smz$ if and only if $\hm^h(X)=0$
for all gauges $h$. It is nice that $\ssmz$ sets are characterized in exactly
the same way, only that the upper Hausdorff measures are used
in place of Hausdorff measures:
\begin{thm}[{\cite[Theorem 3.7]{Zin-prism}}]\label{basicUhnull}
A metric space $X$ is $\ssmz$ if and only if
$\uhm^h(X)=0$ for each gauge $h$.
\end{thm}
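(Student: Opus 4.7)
The statement is a biconditional, so I would prove the two directions separately. The forward direction ($X$ is $\ssmz$ implies $\uhm^h(X)=0$ for every gauge $h$) is quick: given $h$, use right-continuity of $h$ at $0$ together with $h(0)=0$ to pick $\eps_n>0$ so small that $h(\eps_n)\leq 2^{-n}$. The $\ssmz$ property of $X$ yields a $\gamma$-groupable cover $\seq{U_n}$ of $X$ with $\diam U_n\leq\eps_n$, and since $h$ is non-decreasing, $\sum_n h(\diam U_n)\leq\sum_n 2^{-n}<\infty$. Lemma~\ref{gammagr} then gives $\uhm^h(X)=0$.

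The backward direction is more delicate. Given $\seqeps$, assumed without loss of generality to be strictly decreasing to $0$, I must produce a $\gamma$-groupable cover $\seq{U_n}$ of $X$ with $\diam U_n\leq\eps_n$. The plan is to tailor a gauge $h=h_{\seqeps}$ whose step structure encodes $\seqeps$, apply the hypothesis to obtain a Method~I cover, and then assemble the $\gamma$-groupable cover from it. Concretely, I would fix a rapidly growing auxiliary sequence $0=n_0<n_1<\dots$ in tandem with $h$, defining $h$ as a step function that takes a carefully chosen (summable in $j$) value on each interval $(\eps_{n_{j+1}},\eps_{n_j}]$. Applying $\uhm^h(X)=0$ with small $\eta>0$ produces a cover $\{F_m\}$ of $X$ with $\sum_m\uhm_0^h(F_m)<\eta$; each $F_m$ is automatically totally bounded because $\uhm_0^h(F_m)<\infty$ forces a finite $\delta$-fine cover for every $\delta>0$. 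Setting $G_j=F_0\cup\dots\cup F_j$ and $I_j=[n_j,n_{j+1})$, I would extract a finite $\eps_{n_{j+1}-1}$-fine cover of $G_j$ of cardinality at most $|I_j|$ to serve as $\{U_n:n\in I_j\}$. The diameter bound $\diam U_n\leq\eps_n$ then follows from the monotonicity of $\seqeps$, and the $\gamma$-cover property of $\seq{\bigcup_{n\in I_j}U_n}$ is immediate: each $x\in X$ lies in some $F_m$, hence in $G_j$ for all $j\geq m$.

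The main obstacle is the joint calibration of $h$ and $\seq{n_j}$: the step heights of $h$ and the block lengths $n_{j+1}-n_j$ must be chosen so that the small total $\uhm_0^h$-cost of the cover of $G_j$ translates into a bound on the number of sets of diameter at most $\eps_{n_{j+1}-1}$, which in turn fits inside $I_j$. Because $h(0)=0$ allows singletons to contribute nothing to the cost, a raw cost bound does not by itself limit the number of sets, so one must separately exploit total boundedness of $G_j$ to bound the number of sets of positive diameter, and use the step heights to control scales simultaneously. Executing this recursive construction—building $\seq{n_j}$, the gauge $h$, and the finite covers of $G_j$ all at once, and matching cardinalities across every scale—is the combinatorial heart of the argument.
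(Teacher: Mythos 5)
Your forward direction is correct and is the standard argument: pick $\eps_n>0$ with $h(\eps_n)\leq 2^{-n}$ using right-continuity of $h$ at $0$, apply the $\ssmz$ definition, and invoke Lemma~\ref{gammagr}. The backward direction, however, has a genuine gap, and it is not merely the unexecuted ``combinatorial heart'' you flag at the end: the assembly step you commit to cannot work as stated. You propose to cover $G_j=F_0\cup\dots\cup F_j$ by at most $\abs{I_j}=n_{j+1}-n_j$ sets, \emph{all} of diameter at most $\eps_{n_{j+1}-1}$, the finest scale of the block. Nothing at your disposal bounds the $\eps_{n_{j+1}-1}$-covering number of $G_j$ by $\abs{I_j}$. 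The cost bound $\uhm_0^h(G_j)<\eta$ only caps the number of cover members whose diameter exceeds a given positive threshold (each such member costs at least the corresponding step height); it says nothing about how many members of smaller positive diameter are needed, and total boundedness gives finiteness of the covering number with no quantitative control. Concretely, if $G_j$ contains $\{0\}\cup\{1/k:k\geq1\}$ and $\seqeps$ decreases rapidly (say $\eps_m=2^{-2^m}$), then covering this set by sets of diameter $\leq\eps_m$ requires on the order of $\eps_m^{-1/2}$ pieces, which outruns $m$; hence \emph{no} choice of $n_{j+1}$ satisfies your requirement, even though this set has $\uhm_0^h$ equal to $0$ for every gauge $h$ (so the hypothesis may well hand you the trivial decomposition $F_0=G_j$, with no license to refine it).

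The repair, which is where the real work lies, is to abandon the uniform finest-scale cover and use the fact that the requirement is only $\diam U_n\leq\eps_n$ for each individual index $n$, so the early members of a block may be much larger than the late ones. One takes $h$ to be a step gauge with value $\eta_k$ on the window $(\eps_{m_{k+1}},\eps_{m_k}]$; then in any cover of cost less than $\eta$ the number of members with diameter in the $k$-th window is at most $\eta/\eta_k$, and choosing the window lengths $m_{k+1}-m_k$ in advance to dominate the cumulative counts $\sum_{l\leq k}\eta/\eta_l$ lets every positive-diameter member be assigned to an index $n$ with $\eps_n$ at least its diameter. The zero-diameter members, which the cost bound cannot count, are harmless precisely because they can be pushed to arbitrarily late indices; and the witnessing partition into consecutive finite intervals is extracted \emph{after} this assignment rather than fixed beforehand, using the fact that the tail costs tend to $0$ so that late groups contain only members from late windows. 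Your sketch fixes the blocks first and forces every member of a block to the finest scale, and that is exactly where it breaks.
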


%%%%%%%%%%%%%%%%%%%%%%%%%%%%%%%%%%%%%%%%%%%%%%%%%%%%%%%%%%%%%%%%%
%%%%%%%%%%%%%%%%%%%%%%%%%%%%%%%%%%%%%%%%%%%%%%%%%%%%%%%%%%%%%%%%%
\section{Sharply $\MM$-additive sets}\label{sec:sharply}
%%%%%%%%%%%%%%%%%%%%%%%%%%%%%%%%%%%%%%%%%%%%%%%%%%%%%%%%%%%%%%%%%
%%%%%%%%%%%%%%%%%%%%%%%%%%%%%%%%%%%%%%%%%%%%%%%%%%%%%%%%%%%%%%%%%

Given a metric space $X$, we write $\MM(X)$ to denote the
family of all meager sets in $X$, and likewise for a topologigal group.
In most cases there is no danger of confusion, so we write only
$\MM$ instead.

Let us clarify our terminology regarding Polish groups first.
A \emph{Polish group} is a topological group that is homeomorphic to a
Polish space, i.e., a completely metrizable separable topological space.
Trivially, every Polish group admits a complete metric. It also admits
a right-invariant metric. The two metrics, though, do not have to coincide:
there are Polish groups that do not have a metric that is simultaneously
right-invariant and complete.
Also, not every Polish group has a (two-sided) invariant metric.
However, if the group is abelian or compact, then it has an invariant metric.
Any invariant metric on a Polish group is complete.

Recall that Prikry~\cite{prikry} proved that if $\grG$ is a separable group with a
right-invariant metric and $X\subs\grG$ is a set satisfying
$\forall M\in\MM\ MX\neq\grG$, then $X$ has strong measure zero.
This is the easy part of the Galvin-Mycielski-Solovay Theorem
discussed above.
In this section we prove a counterpart of Prikry's result for
sharp measure zero. We first provide a formal definition of
an $\MM$-additive set.

\begin{defn}
Let $\grG$ be a Polish group and $X\subs\grG$. The set $X$ is called
\emph{$\MM$-additive} (or \emph{meager-additive}) if for every meager
set $M\subs\grG$, the set $MX$ is meager.

We will write $\MM^*(\grG)$ for the \si ideal of $\MM$-additive sets in $\grG$.
\end{defn}
$\MM$-additive sets are difficult to deal with in a general
Polish group. The following property, though appearing more complex,
is much easier, as we shall see.

\begin{prop}
Let $\grG$ be a Polish group and $X\subs\grG$.
The following are equivalent.
\begin{enum}
\item $\forall M\in\MM\ \exists K\sups X \text{ \si compact } MK\neq\grG$,
\item $\forall M\in\MM\ \exists K\sups X \text{ \si compact } MK\in\MM$.
\end{enum}
\end{prop}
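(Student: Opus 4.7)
The direction (ii)$\Rightarrow$(i) is immediate: since $\grG$ is a Polish group, it is a Baire space and hence non-meager in itself, so whenever $MK\in\MM$ we have $MK\neq\grG$.

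For the nontrivial direction (i)$\Rightarrow$(ii), the plan is to enlarge $M$ enough that (i) applied to the enlargement already forces $MK$ to be meager, not just a proper subset. Fix a meager set $M\subs\grG$. Since every meager set is contained in an $F_\sigma$ meager set, I may assume $M$ itself is $F_\sigma$, say $M=\bigcup_n F_n$ with each $F_n$ closed nowhere dense. Fix a countable dense set $\{g_n:n\in\Nset\}\subs\grG$ and put
$$
  M'=\bigcup_{n\in\Nset} g_n M.
$$
Each $g_n M$ is meager (left translation is a homeomorphism), so $M'\in\MM$. By (i) there is a \si compact $K\sups X$ with $M'K\neq\grG$. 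Choose $h\in\grG\setminus M'K$; then $g_n^{-1}h\notin MK$ for every $n$, and since inversion and right-translation by $h$ are homeomorphisms, $\{g_n^{-1}h:n\in\Nset\}$ is dense in $\grG$. Therefore $\grG\setminus MK$ is dense, i.e., $MK$ has empty interior.

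Now I invoke the elementary fact that in any topological group the product of a closed set and a compact set is closed (if $f_\alpha k_\alpha\to x$ with $f_\alpha\in F$ closed and $k_\alpha\in L$ compact, pass to a subnet with $k_\alpha\to k\in L$, so $f_\alpha\to xk^{-1}\in F$). Writing $K=\bigcup_m L_m$ with $L_m$ compact, we get
$$
  MK=\bigcup_{n,m}F_n L_m,
$$
which is $F_\sigma$. But an $F_\sigma$ set with empty interior is meager: any closed piece $F_n L_m$ sitting inside a set with empty interior has empty interior, hence is nowhere dense. Thus $MK\in\MM$, establishing (ii).

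The only genuinely non-routine ingredient is the "closed $\cdot$ compact is closed" lemma, which is what makes $MK$ an $F_\sigma$ set and lets the transition from "empty interior" (which is all the dense-translation trick directly delivers) to "meager" go through; everything else is bookkeeping on meager ideals and translation invariance.
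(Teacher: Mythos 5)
Your proof is correct and follows essentially the same route as the paper's: translate $M$ by a countable dense set, apply (i) to the translated meager set to obtain a point whose dense set of translates misses $MK$, and then upgrade ``empty interior'' to ``meager'' by showing $MK$ is $F_\sigma$ via the fact that the product of a closed set and a compact set is closed. The only (cosmetic) difference is that you prove that last lemma with a net/subnet compactness argument, whereas the paper runs an $\eps$-computation with a left-invariant metric; your version is, if anything, cleaner and works in arbitrary topological groups.
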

\begin{proof}
Let $X$ satisfy (ii) and $M\in\MM$. We may suppose $M$ is $F_\sigma$.
Let $Q\subs\grG$ be countable and dense.
Then $QM$ is meager, hence there is a \si compact set $K\sups X$
and $z\notin QMK$. The latter yields $Q^{-1}z\cap MK=\emptyset$.
Since $Q$ is dense, so is $Q^{-1}z$, thus $MK$ is disjoint with a dense set.
If we prove that $MK$ is $F_\sigma$, we are done. To that end it is enough show that
if $C$ is compact and $F$ is closed, then $FC$ is closed.
So suppose $y\notin FC$. Let $d$ be a left-invariant metric on $\grG$.
For each $x\in C$ the set $Fx$ is closed and thus there is $\eps_x>0$ such that
$d(Fx,y)>\eps_x$.
Since $C$ is compact, there is a finite set $I\subs C$ such that
the family of balls $\{B(x,\frac{\eps_x}2):x\in I\}$ covers $C$.
Let $\eps=\min_{x\in I}\frac{\eps_x}2$. We claim that $d(FC,y)\geq\eps$.
Otherwise there is $x\in I$ such that
$d(F\cdot B(x,\frac{\eps_x}2),y)<\frac{\eps_x}2$.
Since $d$ is left-invariant, we have
$F\cdot B(x,\frac{\eps_x}2)\subs B(Fx,\frac{\eps_x}2)$. Therefore
$d(B(Fx,\frac{\eps_x}2),y)<\frac{\eps_x}2$. Hence there is $z$ such that
$d(Fx,z)\leq\frac{\eps_x}2$ and $d(z,y)<\frac{\eps_x}2$, which in turn
yields $d(Fx,y)<\eps_x$, a contradiction.
We proved that $d(FC,y)>0$ for each $y\notin FC$, i.e., $FC$ is closed,
which concludes the proof.
\end{proof}
It is clear that the property described by this proposition is stronger than
$\MM$-additivity. We capture it in the following definition.

\begin{defn}
Let $\grG$ be a Polish group and $X\subs\grG$. The set $X$ is called
\emph{sharply $\MM$-additive} if for every meager set $M\subs\grG$ there is
a \si compact set $K\sups X$ such that $MK$ is meager.

We will write $\MM^\sharp(\grG)$ for the \si ideal of sharply $\MM$-additive
sets in $\grG$.
\end{defn}
As proved in~\cite{Zin-prism}, in $\Cset$ and $\Rset$ and their finite powers,
every $\MM$-additive set is sharply $\MM$-additive. We shall see later that
the same remains true in a wide class of locally compact Polish groups.

Our first theorem shows that sharply $\MM$-additive sets are of sharp
measure zero.

\begin{thm}\label{main1}
Let $\grG$ be a Polish group equipped with a right-invariant metric.
Then every sharply $\MM$-additive set in $\grG$ is $\ssmz$.
\end{thm}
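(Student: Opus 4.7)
By Theorem~\ref{basicUhnull} it suffices to verify $\uhm^h(X)=0$ for every gauge $h$, and by Lemma~\ref{gammagr} this is further reduced to producing, for each $h$, a $\gamma$-groupable cover $\seq{U_m}$ of $X$ with $\sum_m h(\diam U_m)<\infty$. The plan is to tailor a meager $F_\sigma$ set $M=M_h\subseteq\grG$ to the gauge $h$, apply sharp $\MM$-additivity to obtain a \si compact $K\supseteq X$ with $MK$ meager, and harvest the cover from the complement structure of $M$ via a witness $g\notin MK$.

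Given $h$, I will fix a sequence $\seqeps$ with $\sum_n h(\eps_n)<\infty$ and a countable dense set $\{d_k\}\subseteq\grG$. Continuity of inversion at each $d_k$ lets me choose radii $r_{n,k}>0$ with $\diam\bigl(B(d_k,r_{n,k})^{-1}\bigr)\leq\eps_n$ in the right-invariant metric $d$. The open sets $G_n=\bigcup_k B(d_k,r_{n,k})$ are dense (each contains $\{d_k\}$), so $M=\bigcup_n(\grG\setminus G_n)$ is a meager $F_\sigma$. Sharp $\MM$-additivity then supplies a \si compact $K\supseteq X$ with $MK$ meager, and since $\grG$ is nonmeager in itself I may pick $g\in\grG\setminus MK$. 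A short manipulation with complements, inverses, and the right-translation identity $(Ag)^c=A^cg$ gives
\[
   K \subseteq \grG\setminus M^{-1}g = \bigcap_n G_n^{-1}g = \bigcap_n\bigcup_k V_{n,k},
\]
where $V_{n,k}:=B(d_k,r_{n,k})^{-1}g$. Right-invariance of $d$ makes $\diam V_{n,k}\leq\eps_n$.

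Next I write $K=\bigcup_l K_l$ as an increasing union of compacts and, for every $(n,l)$, extract by compactness a finite subcover $K_l\subseteq\bigcup_{k\in F(n,l)}V_{n,k}$. The required $\gamma$-groupable cover is then assembled inductively: partition $\Nset$ into consecutive finite intervals $I_0\sqcup I_1\sqcup\dots$ by choosing at stage $l$ a scale $n_l$ and letting $I_l$ list $|F(n_l,l)|$ consecutive indices, with the cover element $U_m$ assigned to $m\in I_l$ being a member of $\{V_{n_l,k}:k\in F(n_l,l)\}$. Since $\bigcup_{m\in I_l}U_m\supseteq K_l$, every $x\in K$ sits in all but finitely many witnessing groups, yielding the $\gamma$-groupable property on $K$ (hence on $X$). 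The main obstacle is choosing the scales $n_l$ so that $\sum_l|F(n_l,l)|\,h(\eps_{n_l})<\infty$; this amounts to showing $\liminf_n N(K_l,\eps_n)\,h(\eps_n)=0$, i.e.\ $\hm^h(K_l)=0$ for every $l$. It is here that the sharp-additive hypothesis really earns its keep: the $M$ above is built from a sequence $\seqeps$ that is itself $h$-summable, and the meagerness of $MK$ should force each compact piece of $K$ to be $\hm^h$-null. Once this calibration is justified, the summability follows, Lemma~\ref{gammagr} (or, alternatively, Lemma~\ref{lem1} combined with $\uhm^h(X)\leq\hm^h(K)$) gives $\uhm^h(X)=0$, and Theorem~\ref{basicUhnull} closes the proof.
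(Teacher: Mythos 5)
Your overall strategy is the right one and is close in spirit to the paper's: build a meager $F_\sigma$ set $M$ adapted to the gauge $h$, use sharp $\MM$-additivity to find a \si compact $K\sups X$ and a point $g\notin MK$, and read off from $K\subs(\grG\setminus M)^{-1}g$ that $K$ is small. But the step you flag as ``to be justified'' is the heart of the proof, and with the radii as you chose them your construction does not deliver it. You required only $\diam\bigl(B(d_k,r_{n,k})^{-1}\bigr)\leq\eps_n$ for \emph{all} $k$, so at scale $n$ the cover $\{V_{n,k}:k\in\Nset\}$ of $K$ has $h$-sum $\sum_k h(\eps_n)=\infty$; nothing controls how many of these sets a compact piece $K_l$ needs, so $\sum_l\abs{F(n_l,l)}\,h(\eps_{n_l})<\infty$ is unobtainable and $\hm^h(K_l)=0$ does not follow. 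The meagerness of $MK$ alone cannot force this: what forces it is making the hull $\bigcap_nG_n^{-1}$ itself $\hm^h$-null, and for that you must calibrate the radii in $k$ as well, e.g.\ demand $h\bigl(\diam(B(d_k,r_{n,k})^{-1})\bigr)\leq 2^{-n-k}$, so that $\hm^h\bigl(\bigcap_nG_n^{-1}g\bigr)=0$ by right-invariance. (A secondary error: your ``i.e.''\ equating $\liminf_nN(K_l,\eps_n)h(\eps_n)=0$ with $\hm^h(K_l)=0$ is not an equivalence --- Hausdorff measure zero does not bound single-scale covering numbers --- so even granted $\hm^h(K_l)=0$, the single-scale finite-subcover assembly would not yield a summable $\gamma$-groupable cover.)

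Once the calibration is made, you should drop the $\gamma$-groupable machinery entirely and use the shortcut you mention in your last line: $K$ is \si compact and contained in an $\hm^h$-null set, so $\uhm^h(X)\leq\uhm^h(K)=\hm^h(K)=0$ by Lemma~\ref{lem1}. This is exactly how the paper argues; it obtains the null hull for free from the $G_\delta$-regularity of $\hm^h$ applied to a countable dense \emph{symmetric} set $Q$ (take $H\sups Q$ a symmetric $G_\delta$ set with $\hm^h(H)=0$ and put $M=\grG\setminus H$, whence $K\subs Hz$ for the witness $z\notin MK$), avoiding all the hand-built radii.
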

\begin{proof}
Let $d$ be a right-invariant metric on $\grG$
and let $X\subs\grG$ be a sharply $\MM$-additive set.
Let $g$ be an arbitrary gauge. We want to show that $\uhm^g(X)=0$.
Let $Q\subs\grG$ a countable dense symmetric set.
Since Hausdorff measures are $G_\delta$- regular,
there is a $G_\delta$-set $H\sups D$ such that $\hm^g(H)=0$.
Since $D$ is symmetric, we may, replacing $H$ with $H\cap H^{-1}$ if necessary,
suppose that $H$ is symmetric as well. The set $M=\grG\setminus H$
is obviously meager.
Since $X$ is sharply $\MM$-additive, there is a \si compact set $K\sups X$
and $z\notin MK$. Routine manipulation leads to $K\subs H^{-1}z=Hz$.
Since the underlying metric is right-invariant, with the aid
of Lemma~\ref{lem1} we get
$$
  \uhm^g(X)\leq\uhm^g(K)=\hm^g(K)\leq\hm^g(Hz)=\hm^g(H)=0,
$$
as required.
\end{proof}

%%%%%%%%%%%%%%%%%%%%%%%%%%%%%%%%%%%%%%%%%%%%%%%%%%%%%%%%%%%%%%%%%
%%%%%%%%%%%%%%%%%%%%%%%%%%%%%%%%%%%%%%%%%%%%%%%%%%%%%%%%%%%%%%%%%
\section{Sharply $\MM$-additive sets in locally compact groups}
\label{sec:loccpt}
%%%%%%%%%%%%%%%%%%%%%%%%%%%%%%%%%%%%%%%%%%%%%%%%%%%%%%%%%%%%%%%%%
%%%%%%%%%%%%%%%%%%%%%%%%%%%%%%%%%%%%%%%%%%%%%%%%%%%%%%%%%%%%%%%%%

In this section we prove that in locally compact Polish groups,
sharp meager-additivity and sharp measure zero
are equivalent notions. The following theorem may be regarded a
``Galvin-Mycielski-Solovay theorem for $\MM$-additive sets''.

\begin{thm}\label{thm:action3}
Let $\grG$ be a locally compact Polish group and $S\subs\grG$.
Then $S$ is $\ssmz$ if and only if $S$ is sharply $\MM$-additive.
\end{thm}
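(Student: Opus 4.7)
The easy direction, sharply $\MM$-additive $\Rightarrow$ $\ssmz$, will follow immediately from Theorem~\ref{main1} once $\grG$ is equipped with a compatible right-invariant metric, whose existence is guaranteed on any metrizable group by the Birkhoff--Kakutani theorem. For the substantive direction, my plan is to invoke the equivalence of conditions (i) and (ii) in the proposition preceding Theorem~\ref{main1}: given $S\in\ssmz(\grG)$ and $M\in\MM(\grG)$, it suffices to exhibit a single $\sigma$-compact $K\sups S$ with $MK\neq\grG$, i.e.\ a $\sigma$-compact $K\sups S$ and a point $z\in\grG$ with $zK^{-1}\cap M=\emptyset$. Write $M=\bigcup_nF_n$ with $\seq{F_n}$ increasing, closed, and nowhere dense, and fix a compatible left-invariant metric $d$ on $\grG$.

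I would construct $K$ and $z$ jointly by a fusion argument combining a $\gamma$-groupable cover of $S$ afforded by the $\ssmz$ property with a nested-sets construction in the spirit of Galvin--Mycielski--Solovay. Inductively, I would produce a partition $\Nset=\bigsqcup_jI_j$ into consecutive finite blocks, a sequence $\seqeps$ of positive reals with $\eps_m$ bounded above by suitable thresholds $\eta_j$ for $m\in I_j$, and a nested sequence $W_0\sups W_1\sups\dots$ of non-empty open sets with compact closures and diameters tending to zero. The arrangement is that $\ssmz(S)$ applied to $\seqeps$ yields a $\gamma$-groupable cover $\seq{U_m}$ of $S$ compatible with the partition $\{I_j\}$, refined (using local compactness) so that each $\clos{U_m}$ is compact, and that at stage $j$ the shrinkage $W_j\subs W_{j-1}$ satisfies the finite avoidance family
\[
W_j\cdot\clos{U_m}^{-1}\cap F_n=\emptyset\qquad(n\leq j,\ m\in I_0\cup\dots\cup I_j).
\]
The threshold $\eta_j$ is to be chosen so small that, for pieces $\clos{U_m}$ of diameter at most $\eta_j$, this shrinkage is feasible: it relies on the nowhere density of $F_n$ together with the uniform continuity of the map $(y,u)\mapsto yu^{-1}$ on the relatively compact region housing $W_{j-1}$.

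Putting $z=\bigcap_j\clos{W_j}$, a single point by completeness and vanishing diameters, and $K=\bigcup_m\clos{U_m}$, a $\sigma$-compact superset of $S$, the conclusion will read off: for any $m$ and $n$ pick $j$ with $m\in I_0\cup\dots\cup I_j$ and $n\leq j$; then $z\in W_j$ forces $z\clos{U_m}^{-1}\cap F_n=\emptyset$, whence $zK^{-1}\cap M=\emptyset$ as desired. The central obstacle---and the step that mirrors the heart of the Kysiak--Fremlin proof of the Galvin--Mycielski--Solovay theorem in the locally compact setting---is verifying the feasibility of the shrinkage, namely that the finite union $\bigcup F_n\clos{U_m}$ in question does not exhaust $W_{j-1}$. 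This will require a quantitatively careful deployment of the nowhere density of each $F_n$ against the diameters of the test pieces $\clos{U_m}$, calibrated through the thresholds $\eta_j$; the $\gamma$-groupable structure is crucial here in that it confines the ``new'' avoidance conditions introduced at stage $j$ to a finite set and thereby allows a Baire-style extraction of $W_j$ inside $W_{j-1}$.
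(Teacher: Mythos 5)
Your easy direction is fine (it is exactly how the paper concludes), and your reduction of the hard direction to producing a single $\sigma$-compact $K\sups S$ and a point $z\notin MK$, via the equivalence (i)$\Leftrightarrow$(ii) of the proposition in Section~\ref{sec:sharply}, is a legitimate shortcut in the group setting. But the construction you propose for $K$ and $z$ has a genuine gap: the target is unachievable for the set $K=\bigcup_m\clos{U_m}$. A piece $U_m$ of the cover is only constrained to have small diameter; it may well have nonempty interior, and then for a dense meager $M$ (decomposed as $F_n\upto M$ with the $F_n$ eventually $\delta$-dense for every $\delta$) one has $F_n\clos{U_m}=\grG$ for all large $n$, hence $M\clos{U_m}=\grG$ already for that single $m$. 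Consequently no point $z$ with $zK^{-1}\cap M=\emptyset$ can exist, and correspondingly the ``off-diagonal'' avoidance conditions you impose at stage $j$ --- namely $W_j\cdot\clos{U_m}^{-1}\cap F_n=\emptyset$ for \emph{old} pieces $m\in I_0\cup\dots\cup I_{j-1}$ against the \emph{new} level $n=j$ --- are infeasible: the diameter of $\clos{U_m}$ was calibrated only against $F_0,\dots,F_{j'}$ for the stage $j'$ at which it entered, and nowhere density of $F_j$ gives no control on $F_j\clos{U_m}$ at that scale. Finiteness of the new conditions (which is what you credit $\gamma$-groupability for) is not the obstacle.

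The paper's proof of the underlying Theorem~\ref{thm:action} avoids this by imposing only the \emph{diagonal} conditions --- the nested sets at stage $n$ avoid $\phi(E_n\times P_n)$, i.e.\ new pieces against the current level only --- and by taking for the $\sigma$-compact superset not the full union of the pieces but $\widehat S=\bigcup_k\bigcap_{j\geq k}\bigcup_{m\in I_j}\clos{U_m}$. This is where $\gamma$-groupability is genuinely used: every $x\in\widehat S$ lies in some piece from every sufficiently late block, so $\widehat S\times M\subs\bigcup_n E_n\times P_n$ and the diagonal avoidance suffices. You would need to make the same two changes (diagonal conditions only; the ``tail-of-groups'' definition of $K$). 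A second, smaller issue in your write-up is an order-of-quantifiers circularity: the thresholds $\eta_j$ must be fixed before the cover $\seq{U_m}$ is requested from the $\ssmz$ hypothesis, so they cannot depend on the actual location of $W_{j-1}$ or of the pieces; one needs the uniform statement of Lemma~\ref{comp} (the same $\delta$ works for all centers $x$ in a fixed compact set and all positions $y$ of the piece), which your appeal to uniform continuity gestures at but does not pin down. Finally, note that the paper proves the more general action/mapping version, for which your one-point-plus-translation trick is unavailable; your shortcut, once repaired, would only cover the group case stated here.
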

It turns out that this theorem is a particular case of a more general result.
Its proof is postponed until the end of the section.
Let us first note that it follows from Proposition~\ref{mapping1} that
the notion of sharp measure zero in a locally compact
metric group is independent of the choice of metric.
\begin{prop}\label{indep1}
Let $d_1$ and $d_2$ be two metrics on a locally compact metrizable group $\grG$.
If $X\subs\grG$ is $\ssmz$ with respect to $d_1$,
then so it is with respect to $d_2$.
\end{prop}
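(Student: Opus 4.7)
The plan is to apply Proposition~\ref{mapping1} to the identity map $\id\colon(\grG,d_1)\to(\grG,d_2)$, viewed as a continuous map between metric spaces. First I would verify the two hypotheses of that proposition. The continuity of $\id$ is automatic: by our standing convention both $d_1$ and $d_2$ are compatible metrics on the locally compact metrizable group $\grG$, so each induces the same underlying group topology and $\id$ is in fact a homeomorphism between $(\grG,d_1)$ and $(\grG,d_2)$. To verify the other hypothesis, I would invoke the standard fact that a separable locally compact metrizable space is $\sigma$-compact; thus $(\grG,d_1)$ is $\sigma$-compact as a metric space, and in particular falls under the scope of Proposition~\ref{mapping1}.

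With both hypotheses in place, applying Proposition~\ref{mapping1} with $f=\id$ and $S=X$ yields that $\id(X)=X$ is $\ssmz$ in $(\grG,d_2)$, which is exactly what we want. No further argument is needed.

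The only subtlety worth flagging is that the identity between two distinct compatible metrics is typically \emph{not} uniformly continuous, so Proposition~\ref{trivUH}(ii) on its own would not suffice. The strengthening to merely continuous maps provided by Proposition~\ref{mapping1}, which is in turn made possible by the $\sigma$-compact cover of \ssmz{} sets guaranteed by Lemma~\ref{totbd}, is precisely the ingredient that powers this independence-of-metric statement. Beyond recognizing that this is the right tool to deploy, I anticipate no obstacle.
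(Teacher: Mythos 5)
Your proof is correct and is exactly the argument the paper intends: the paper states that Proposition~\ref{indep1} ``follows from Proposition~\ref{mapping1}'', and your application of that proposition to the identity map, using the standing separability convention to get $\sigma$-compactness of the locally compact group, fills in the details in precisely the expected way. Your remark that uniform continuity (Proposition~\ref{trivUH}(ii)) alone would not suffice is also on point.
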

Thus we may refer to a $\ssmz$ set in a locally compact group without referring
to a metric on $\grG$.

The following theorem is a group-free version of Theorem \ref{thm:action3}.

\begin{thm}\label{thm:action}
Let $Y$ be a Polish metric space and $X$ a separable locally compact metric space.
Let $\phi:Y\times X\to X$ be a continuous mapping such that
for each $y\in Y$ and every compact nowhere dense set $P\subs X$ the image
$\phi(\{y\}\times P)$ is nowhere dense.
If $S\subs Y$ is $\ssmz$ and $M\subs X$ is meager,
then there is a \si compact set $\widehat S\supseteq S$ such that
$\phi(\widehat S\times M)$ is meager.
\end{thm}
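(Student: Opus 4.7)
My plan is to reduce $M$ to a countable union of compact nowhere dense sets, construct $\hat S$ from $\gamma$-groupable covers of $S$ restricted to compact pieces, and bound $\phi(\hat S\times M)$ via a $\limsup$-type inclusion.

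I begin with reductions. Writing $M=\bigcup_k F_k$ with $F_k$ closed nowhere dense in $X$, and using that $X$ is locally compact separable metric, hence \si compact, I may further assume each $F_k$ is compact nowhere dense. Since $\ssmz$ is a uniform property (Proposition~\ref{trivUH}), I assume the metric on $Y$ is complete; Lemma~\ref{totbd}(ii) then supplies a \si compact $\bigcup_p K_p^*\supseteq S$ with $K_p^*$ compact.

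For each $p$, the piece $S_p:=S\cap K_p^*$ is $\ssmz$ as a subset of $S$. Working inside the compact $K_p^*$, where closed subsets are compact, I apply the $\gamma$-groupable cover characterization of $\ssmz$ to obtain, for some sequence $\langle\varepsilon_n^p\rangle$ tailored (via continuity of $\phi$ and compactness of $F_k$) to the nowhere dense sets $F_k$, a cover $\{U_n^p\}$ of $S_p$ with $\diam U_n^p\leq\varepsilon_n^p$ and witnessing partition $\{I_j^p\}$. I set $V_j^p=\bigcup_{n\in I_j^p}\overline{U_n^p}$; these are compact. Define $\hat S_p=\bigcup_J\bigcap_{j\geq J}V_j^p$ and $\hat S=\bigcup_p\hat S_p$. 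Each $\hat S_p$ is \si compact (countable union of compact intersections), hence so is $\hat S$, and the $\gamma$-cover property forces $S_p\subseteq\hat S_p$: every $y\in S_p$ lies in $V_j^p$ for all but finitely many $j$.

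For the image, I observe that if $y\in\hat S_p$ and $m\in M$, then $y\in V_j^p$ eventually, so $\phi(y,m)\in\phi(V_j^p\times M)$ eventually. Hence
\[
\phi(\hat S\times M)\subseteq\bigcup_p\bigcap_{J}\bigcup_{j\geq J}\phi(V_j^p\times M)=\bigcup_p\limsup_j\phi(V_j^p\times M).
\]
If each $\phi(V_j^p\times F_k)$ is meager, then each $\limsup_j\phi(V_j^p\times M)$ is meager (as a subset of the meager set $\bigcup_{j,k}\phi(V_j^p\times F_k)$), and so is the countable union.

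The main obstacle is to ensure that $\phi(V_j^p\times F_k)$ is meager --- in fact nowhere dense, being a finite union of continuous images of compact products. The single-fiber hypothesis delivers nowhere-denseness only for $\phi(\{y\}\times F_k)$ at individual $y$, while fattening to $V_j^p$ can in general yield sets with non-empty interior. Overcoming this requires a delicate Galvin-Mycielski-Solovay-style coupling of $\varepsilon_n^p$ with the internal structure of $F_k$. A cleaner alternative invokes the upper Hausdorff measure characterization (Theorem~\ref{basicUhnull}) together with Lemma~\ref{gammagr}: pick a gauge $h$ encoding the covering function of $F_k$ at each scale, extract a $\gamma$-groupable cover of $S_p$ with $\sum_n h(\diam U_n^p)<\infty$, and translate summability into meagerness of $\phi(V_j^p\times F_k)$ through a counting argument that combines $\gamma$-groupability with the nowhere-denseness of single-point images.
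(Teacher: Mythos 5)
Your reductions and the construction of $\widehat S$ are essentially those of the paper: pass to compact nowhere dense $P_n\upto M$, use completeness and Lemma~\ref{totbd} to work inside a \si compact subset of $Y$, take a $\gamma$-groupable cover with prescribed diameters, form the witnessing groups $V_j$, set $\widehat S=\bigcup_J\bigcap_{j\geq J}V_j$, and bound $\phi(\widehat S\times M)$ by the union of the images of the product sets. But the argument stops exactly where the theorem's content begins, and you say so yourself. The route you propose --- arranging that each $\phi(V_j\times F_k)$ be nowhere dense and then taking a countable union --- cannot work as stated: your estimate $\limsup_j\phi(V_j\times M)\subs\bigcup_{j,k}\phi(V_j\times F_k)$ discards the $\limsup$ structure entirely, so if each term were meager the conclusion would hold with the (typically much larger, possibly non-meager) set $\bigcup_j V_j$ in place of $\widehat S$ and the $\gamma$-groupability would play no role. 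The hypothesis gives nowhere-density only fiberwise, each $V_j$ is an uncountable compact set in general, and no choice of the diameters alone is shown to prevent $\phi(V_j\times F_k)$ from having interior --- precisely the obstacle you name without overcoming.

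What is missing is the uniform avoidance lemma of Hru\v s\'ak and Zapletal, quoted in the paper as Lemma~\ref{comp}: for compact $K\subs Y$, a compact ball $C\subs X$ and compact nowhere dense $P$, for every $\eps>0$ there is $\del>0$ such that every ball $B(x,\eps)$ with $x\in C$ contains a ball $B(z,\del)$ disjoint from $\phi(B(y,\del)\times P)$, uniformly in $y\in K$. This is applied \emph{before} the cover is chosen: one recursively builds a single sequence $\seqeps$ realizing this avoidance simultaneously for $K_n$, $P_n$ and the first $n$ members of a fixed countable base of $X$, and only then invokes \ssmz{} to obtain the $\gamma$-groupable cover $\{E_n\}$ with $\diam E_n<\eps_n$. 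Even then the individual sets $\phi(E_n\times P_n)$ are not shown to be nowhere dense; instead, for each basic open set $U_k$ a fusion of nested compact balls $B(x^k_{n+1},\eps_{n+1})\subs B(x^k_n,\eps_n)\setminus\phi(E_{n+1}\times P_{n+1})$ yields a point $x^k\in U_k$ avoiding $\bigcup_n\phi(E_n\times P_n)$, and the resulting countable dense set witnesses that the $F_\sigma$ set $\phi(\widehat S\times M)$ is meager. The gauge-summability alternative you sketch at the end is not the mechanism used here (Lemma~\ref{gammagr} enters only in the proof of Theorem~\ref{polish1}) and is too vague to assess; as it stands, your proof has a genuine gap at its central step.
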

\begin{proof}
We need the following combinatorial lemma.
\begin{lem}[{\cite[Lemma 6.2]{MR3707641}}]\label{comp}
Let $K$ be a compact metric space and
$X$ a separable locally compact metric space.
Let $U\subs X$ be an open set with compact closure $C=\clos U$ and
$P\subs X$ be compact nowhere dense.
Let $\phi:K\times X\to X$ be a continuous mapping such that
for each $y\in K$ the image $\phi(\{y\}\times P)$ is nowhere dense.
Then
$$
  \forall\eps>0\ \exists\del>0\ \forall x\in C\ \forall y\in K\ \exists z\in C
  \quad B(z,\del)\subs B(x,\eps)\setminus\phi(B(y,\del)\times P).
$$
\end{lem}

Since $S$ is $\ssmz$ and $Y$ is complete, $S$ is contained in
a \si compact subset of $Y$, so we may suppose $Y$ is \si compact.
Let $K_n$ be compact sets in $Y$ with $K_n\upto Y$ and
let $P_n$ be compact nowhere dense sets with $P_n\upto M$.
Let $\{U_k\}$ be a countable base of $X$.
For each $k$ choose $x_0^k\in X$ and $\eps_0^k>0$ such that
$B(x_0^k,\eps_0^k)\subs U_k$ is compact, and let $C_k=B(x_0^k,\eps_0^k)$.
Use Lemma~\ref{comp} to recursively construct a sequence
$\Seqeps$ such that
\begin{equation}\label{eq1}
\begin{split}
\forall n\ \forall i\leq n\
  & \forall x\in C_i \forall y\in K_n\ \exists z\in C_i\\
   & B(z,\eps_n)\subs B(x,\eps_{n-1})\setminus\phi((B(y,\eps_n)\cap K_n)\times P_n).
\end{split}
\end{equation}
Since $S$ is $\ssmz$, there is an  $\gamma$-groupable cover $\{E_n\}$ of $S$
such that $\diam E_n<\eps_n$ for all $n$.
Hence for each $n$ there is $y$ such that $E_n\subs B(y,\eps_n)$.
Therefore we may use \eqref{eq1} to construct sequences $\seq{x_n^k:n\in\Nset}$
such that for all $k$
\begin{equation}\label{eq2}
  B(x_{n+1}^k,\eps_{n+1})\subs B(x_n^k,\eps_n)
  \setminus \phi((E_{n+1}\cap K_{n+1})\times P_{n+1}).
\end{equation}
It is easy to check that since $\{E_n\}$ is a $\gamma$-groupable cover of $S$
and $K_n\upto Y$, the family $\{E_n\cap K_n\}$ is also a $\gamma$-groupable cover of $S$.
Thus we might have supposed that $E_n\subs K_n$, and also that all $E_n$'s are closed.
Therefore \eqref{eq2} simplifies to
\begin{equation}\label{eq3}
  B(x_{n+1}^k,\eps_{n+1})\subs B(x_n^k,\eps_n)
  \setminus \phi(E_{n+1}\times P_{n+1}).
\end{equation}
In particular, $B(x_n^k,\eps_n)$ is a decreasing sequence of compact balls for all $k$
and thus there is a point $x^k\in U_k$ such that
\begin{equation}\label{eq4}
x^k\notin \bigcup_{n\in\Nset}\phi(E_n\times P_n).
\end{equation}
The set $\widehat S$ is constructed as follows: Let $\mc G_j$ be the groups of $E_n$'s
witnessing to the $\gamma$-groupability of $\{E_n\}$.
Put $G_n=\bigcap_{n\in\mc G_j}E_n$ and let
$F_n=\bigcap_{i<n}G_i$ and $\widehat S=\bigcup_{n\in\Nset}F_n$.
It is clear that since $E_n$'s are closed, the set $\widehat S$ is $F_\sigma$, and
clearly $S\subs\widehat S$.
Moreover, routine calculation shows that
$\widehat S\times M\subs\bigcup_n E_n\times P_n$. Therefore \eqref{eq4} yields
$x^k\notin\phi(\widehat S\times M)$ for all $k$.
So letting $D=\{x^k:k\in\Nset\}$, the set $D$ is disjoint with
$\phi(\widehat S\times M)$ and it is dense in $X$.
Since $\widehat S$ and $M$ are \si compact, so is $\phi(\widehat S\times M)$.
Therefore $\phi(\widehat S\times M)$ is an $F_\sigma$ set disjoint with a dense set,
and is thus meager.
\end{proof}

The following is obviously a particular case of the theorem.
\begin{coro}\label{thm:action2}
Let $\phi:\grG\curvearrowright X$ be an action of a Polish group
$\grG$ on a \si compact space $X$. If $S\subs\grG$ is $\ssmz$, then there is
\si compact set $\widehat S\supseteq S$ such that
$\phi(\widehat S\times M)$ is meager.
\end{coro}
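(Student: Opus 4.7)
The plan is to deduce the corollary as a direct instance of Theorem~\ref{thm:action}. I take $Y := \grG$, endowed with any compatible Polish metric, and keep the space $X$ as given, tacitly reading ``\si compact'' as ``locally compact \si compact'' so that the hypothesis on $X$ in the theorem is met. The action $\phi:\grG\times X\to X$ is continuous by the very definition of a topological group action, so three of the four hypotheses of Theorem~\ref{thm:action} are immediate.

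It remains to verify the nowhere-density clause. For each fixed $y\in\grG$, the partial map $x\mapsto\phi(y,x)=y\cdot x$ is a self-homeomorphism of $X$, with continuous inverse $x\mapsto\phi(y^{-1},x)$. Since homeomorphisms preserve nowhere-density (and compactness), whenever $P\subs X$ is compact and nowhere dense, so is $\phi(\{y\}\times P)=y\cdot P$. Hence all hypotheses of Theorem~\ref{thm:action} are satisfied.

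Fixing any meager set $M\subs X$ (implicit in the corollary's statement), Theorem~\ref{thm:action} then produces a \si compact set $\widehat S\sups S$ such that $\phi(\widehat S\times M)$ is meager, which is exactly the asserted conclusion. I do not foresee any substantive obstacle: the only non-cosmetic check is that left multiplication by a group element preserves nowhere-density, and this is a one-line consequence of the group-action axioms.
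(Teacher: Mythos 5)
Your proof is correct and is exactly the paper's intended argument: the paper simply declares the corollary ``obviously a particular case'' of Theorem~\ref{thm:action}, and you supply the one substantive check it leaves implicit, namely that each partial map $\phi(y,\cdot)$ is a self-homeomorphism of $X$ (with inverse $\phi(y^{-1},\cdot)$) and therefore carries compact nowhere dense sets to compact nowhere dense sets. Your two glosses --- reading ``\si compact'' as locally compact \si compact, and quantifying the unbound $M$ as an arbitrary meager subset of $X$ fixed before $\widehat S$ is chosen --- are the right readings of the statement and match how the corollary is used in the proof of Theorem~\ref{thm:action3}.
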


\begin{proof}[Proof of Theorem~\ref{thm:action3}]
Let $X=\grG$, define $\phi:\grG\curvearrowright\grG$ by
$\phi(x,y)=yx$ and apply the corollary to get the forward implication.
The reverse implication follows from Theorem~\ref{main1}.
\end{proof}

%%%%%%%%%%%%%%%%%%%%%%%%%%%%%%%%%%%%%%%%%%%%%%%%%%%%%%%%%%%%%%%%%%%%%%%%
%%%%%%%%%%%%%%%%%%%%%%%%%%%%%%%%%%%%%%%%%%%%%%%%%%%%%%%%%%%%%%%%%%%%%%%%
\section{Meager-additive sets}\label{sec:polish}
%%%%%%%%%%%%%%%%%%%%%%%%%%%%%%%%%%%%%%%%%%%%%%%%%%%%%%%%%%%%%%%%%%%%%%%%
%%%%%%%%%%%%%%%%%%%%%%%%%%%%%%%%%%%%%%%%%%%%%%%%%%%%%%%%%%%%%%%%%%%%%%%%

In the previous sections we linked sharp measure zero to sharp meager-additivity.
In this section we investigate $\MM$-additive sets in the hope that we can link them to
$\ssmz$ sets.

We first note that if a set in a Polish group is \ssmz{} in some complete metric,
then it is \ssmz{} in any other metric as well. This follows at once from
Proposition~\ref{mapping1}.
Thus in the following theorem the particular metric does not matter.

\begin{thm}\label{polish1}
Let $\grG$ be a Polish group admitting an invariant metric.
If $X\subs\grG$ is $\MM$-additive, then $X$ is $\ssmz$ (in any metric on $\grG$).
\end{thm}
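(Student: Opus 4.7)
The plan is to adapt the proof of Theorem~\ref{main1}, replacing the hypothesis of sharp meager-additivity by plain $\MM$-additivity and compensating with a \si compactness reduction.

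First, a reduction of the ambient metric. Since any invariant metric on a Polish group is complete, Proposition~\ref{mapping1} applied to the identity map from $\grG$ equipped with the invariant metric to $\grG$ equipped with any other compatible metric shows that it suffices to prove $X$ is $\ssmz$ in the chosen invariant metric $d$. By Theorem~\ref{basicUhnull}, this is equivalent to verifying $\uhm^g(X)=0$ for every gauge $g$.

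Fixing such a gauge $g$, the core step follows the template of Theorem~\ref{main1}. Pick a countable dense symmetric set $D\subs\grG$ and a symmetric dense $G_\delta$ set $H\sups D$ with $\hm^g(H)=0$; let $M=\grG\setminus H$, which is meager and $F_\sigma$. Apply $\MM$-additivity of $X$ to obtain $z\notin MX$ and derive $X\subs Hz$ via the standard manipulation of inverses used in the proof of Theorem~\ref{main1}. Invariance of $d$ then gives $\hm^g(Hz)=\hm^g(H)=0$, so $\hm^g(X)=0$.

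The remaining gap between $\hm^g(X)=0$ and the desired $\uhm^g(X)=0$ is bridged by Lemma~\ref{lem1}, which ensures the two measures agree on \si compact sets in the complete metric $d$. Hence it suffices to trap $X$ inside some \si compact subset of $Hz$. I would accomplish this by iterating the preceding argument: build a sequence of meager sets $M_n\sups M$ whose complements progressively refine $H$ using finitely many balls of shrinking radii centred at the initial segments of $D$, and apply $\MM$-additivity to each $M_n$ to produce $z_n$ with $X\subs(\grG\setminus M_n)z_n$. Arrange the $M_n$ so that the countable intersection of these translates is \si totally bounded, hence \si compact in the complete metric $d$. Calling the resulting trap $K$, the chain $\uhm^g(X)\leq\uhm^g(K)=\hm^g(K)\leq\hm^g(Hz)=0$ concludes the argument.

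The main obstacle is the \si compactness reduction. In a locally compact $\grG$ (cf.~Theorem~\ref{thm2}) the analogous step is automatic because bounded sets are relatively compact; here, however, the two-sided invariance of $d$ must be exploited to control the shape of successive translates and ensure that finitely many balls at each scale sharpen the intersection into a \si totally bounded set. This structural step, extracting \si compactness from bare $\MM$-additivity in a TSI Polish group, is the heart of the argument and what distinguishes this theorem from Theorem~\ref{main1}.
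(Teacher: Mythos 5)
There is a genuine gap, and it sits exactly at the step you yourself flag as ``the heart of the argument.'' The first part of your proposal is correct but only reproduces Prikry's argument: choosing a symmetric dense $G_\delta$ set $H$ with $\hm^g(H)=0$, setting $M=\grG\setminus H$ and picking $z\notin MX$ gives $X\subs Hz$ and hence $\hm^g(X)=0$ for every gauge, i.e.\ $X$ is $\smz$. The entire content of the theorem is the upgrade from $\hm^g$ to $\uhm^g$, and the mechanism you sketch for it cannot work. Each $\grG\setminus M_n$ is a dense $G_\delta$, so each translate $(\grG\setminus M_n)z_n$ is comeager, and so is the countable intersection of these translates. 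In a Polish group that is not locally compact every compact set has empty interior, hence every \si compact set is meager; a comeager set therefore cannot be \si totally bounded, no matter how the $M_n$ are arranged. One would of course want to intersect with a \si compact superset of $X$, but producing such a superset from bare $\MM$-additivity is precisely what must be proved (cf.\ Lemma~\ref{totbd}), so that move is circular. The underlying problem is that the conclusion ``$MX$ misses some point $z$'' extracted from $MX\in\MM$ is too weak: it yields one translate per meager set, i.e.\ an ordinary cover, whereas $\ssmz$ demands a $\gamma$-groupable one.

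The paper's proof is structurally different and never passes through $\hm^g(X)=0$ or through \si compact traps. It builds a grid of finite separated sets $Q^*_k$, defines canonical meager sets $H(f,x,c)$ from them, shows these form a base for $\MM(\grG)$ in a quantitatively controlled way (Lemma~\ref{polish4}), and proves the combinatorial Lemma~\ref{polish5}: the inclusion $H(f,x,\frac14)\subs H(f\circ g,y,1)$ forces, for all but finitely many $n$, some $k\in[g(n),g(n+1))$ with $\dist(Q^*_{f(k)}+x,Q^*_{f(k)}+y)\leq\eps_{f(k+1)}$. Applying $\MM$-additivity to $H(f,0,\frac14)$ then covers $X$ by balls $B(-p+q+y,\eps_{f(k+1)})$ organized into finite blocks indexed by the intervals $[g(n),g(n+1))$ --- that block structure is exactly the $\gamma$-groupability --- and the gauge condition built into $f$ makes the sum of $h(\diam\cdot)$ converge, so Lemma~\ref{gammagr} gives $\uhm^h(X)=0$ directly. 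To complete your proof you would need an analogue of this: a conclusion from $\MM$-additivity that is quantitative and block-structured, not merely the avoidance of a single point.
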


Let us recall once again that not every Polish group admits an invariant metric,
but if it is compact or abelian, then it does.
Also, any invariant metric on a Polish group is complete.
Polish groups that admit an invariant metric are referred to as \emph{tsi}
groups.

The following easily follows from Theorems~\ref{polish1} and~\ref{thm:action3}.
\begin{thm}\label{main3a}
Let $\grG$ be a locally compact \emph{tsi} group.
Let $X\subs\grG$. The following are equivalent:
\begin{enum}
\item $X$ is $\MM$-additive,
\item $X$ is sharply $\MM$-additive,
\item $X$ is $\ssmz$.
\end{enum}
\end{thm}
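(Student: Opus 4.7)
The plan is to close a short cycle of implications, deploying the two theorems referenced in the statement as black boxes. Since the hypothesis ``locally compact tsi'' combines both the tsi assumption required by Theorem~\ref{polish1} and the local compactness assumption required by Theorem~\ref{thm:action3}, the only real work is to verify that nothing is missing.

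First I would dispatch the trivial direction (ii) $\Rightarrow$ (i): if there is a $\sigma$-compact $K \supseteq X$ with $MK$ meager, then $MX \subseteq MK$ is meager, so $X$ is $\MM$-additive straight from the definition. Next, for (i) $\Rightarrow$ (iii) I would invoke Theorem~\ref{polish1}: the group $\grG$ admits an invariant metric by the tsi assumption, so every $\MM$-additive subset of $\grG$ is $\ssmz$ (and by the remark preceding Theorem~\ref{polish1}, this is independent of the choice of metric). Finally, to obtain (iii) $\Rightarrow$ (ii) I would apply the forward implication of Theorem~\ref{thm:action3}, whose hypothesis (locally compact Polish group) is met; this yields, for every meager $M$, a $\sigma$-compact $\widehat{S} \supseteq X$ such that $M \widehat{S}$ is meager, which is exactly sharp $\MM$-additivity.

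I do not anticipate a genuine obstacle: the argument is purely a matter of matching assumptions to conclusions. The one subtlety worth flagging is that Theorem~\ref{polish1} delivers $\ssmz$ in \emph{any} compatible metric, so even though Theorems~\ref{polish1} and~\ref{thm:action3} a priori refer to different metrics (an invariant one versus an arbitrary one), the metric-independence of $\ssmz$ on locally compact groups (Proposition~\ref{indep1}) guarantees that they can be chained together without ambiguity.
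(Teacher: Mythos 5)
Your proposal is correct and matches the paper's argument, which simply notes that the theorem follows from Theorems~\ref{polish1} and~\ref{thm:action3}; your cycle (ii)~\Implies~(i) trivially, (i)~\Implies~(iii) via Theorem~\ref{polish1}, and (iii)~\Implies~(ii) via Theorem~\ref{thm:action3} is exactly the intended decomposition. The remark on metric-independence of $\ssmz$ is a sensible precaution but not a point of difficulty.
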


The rest of this section is devoted to the proof of Theorem~\ref{polish1}.

The symbol $\grG$ stands for the Polish group. We suppose it is equipped with
an invariant (and thus complete) metric throughout this section.
The metric will be denoted by $d$.
We will use additive notation for the group operation, though $\grG$ is not a priori
assumed to be abelian.

We shall deal with series in $\grG$. Write $\norm{x}=d(0,x)$. Suppose that $\seq{x_n}$ is
a sequence in $\grG$ such that $\sum_{n\in\omega}\norm{x_n}<\infty$. Then the sequence
of partial sums is Cauchy and therefore converges. We will thus write $\sum_{n\in\omega}x_n$
for $\lim_{n\to\infty}(x_0+x_1+\dots+x_n)$.

Given $\eps>0$, a set $B\subs\grG$ is $\eps$-separated if $d(p,q)\geq\eps$ for any
two distinct points in $B$.

\subsection*{The grid}
For each $n\in\omega$ set $\eps_n=2^{-n}$. We use this notation throughout.
Define a system of grids in $\grG$ as follows.
\begin{itemyze}
  \item Let $Q_0$ be a set that is maximal among all $\eps_0$-separated sets
  in $\grG$ containing $0$
  \item For $n\geq1$ let $Q_n$ be a set that is maximal among all $\eps_n$-separated
  subsets of the closed ball $B(0,\eps_{n-1})$ containing $0$.
\end{itemyze}
The maximality ensures that any point in $B(0,\eps_{n-1})$ is within distance $\eps_n$ from some
point of $Q_n$.

Each $Q_n$ is finite or countable. Provide it with a well-ordering
so that $0$ is the first point in the order and
\begin{itemyze}
  \item let $Q_{n,m}$ be the set of the first $m$ points in $Q_n$.
\end{itemyze}
Then set
\begin{itemyze}
  \item $Q_n^*=Q_{0,n}+Q_{1,n}+\dots+Q_{n,n}$,
  %\item $Q_n'=Q_0+Q_1+\dots+Q_n$,
  \item $Q=\bigcup_{n\in\omega}Q_n^*$.
\end{itemyze}
It is easy to check that $Q$ is a dense set.

\subsection*{Base for meager sets}

Next we define canonical meager sets in $\grG$.
We will use the following notation: $\Pset$ is the family of all sequences
of natural numbers; $\UPset\subs\Pset$ is the family of nondecreasing
unbounded sequences of natural numbers.
The quantifiers $\fmany$ and $\emany$ have the usual meaning:
``for all except finitely many'' and ``for infinitely many''.
The symbol $\dist$ denotes the lower distance.

\begin{defn}
Let $f\in\UPset$, $x\in\grG$ and $c>0$. Let
$$
  H(f,x,c)=\{z\in\grG:\fmany n\ \dist(z,Q^*_{f(n)}+x)\geq c\,\eps_{f(n+1)}\}.
$$
\end{defn}
\begin{lem}\label{polish3}
The set $H(f,x,c)$ is meager for any $f\in\UPset$, $x\in\grG$ and $c>0$.
\end{lem}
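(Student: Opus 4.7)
The plan is to decompose $H(f,x,c)$ as an $F_\sigma$ union of closed sets and show each piece is nowhere dense. Writing $F_N=\bigcap_{n\geq N}\{z:\dist(z,Q^*_{f(n)}+x)\geq c\eps_{f(n+1)}\}$, we have $H(f,x,c)=\bigcup_{N\in\Nset}F_N$, and each $F_N$ is closed because $z\mapsto\dist(z,A)$ is continuous for any set $A$. Nowhere-density of $F_N$ is equivalent to density of its open complement $V_N=\bigcup_{n\geq N}\{z:\dist(z,Q^*_{f(n)}+x)<c\eps_{f(n+1)}\}$, so this is what I aim to establish for every $N$.

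Two structural facts drive the density argument. First, the sequence $\seq{Q^*_m}$ is \emph{increasing} in $m$: each $Q_i$ is well-ordered with $0$ as its first element, so $Q_{i,m}\subseteq Q_{i,m+1}$, and moreover $0\in Q_{m+1,m+1}$, so any $q_0+\dots+q_m\in Q^*_m$ can also be written as $q_0+\dots+q_m+0\in Q^*_{m+1}$. Second, $Q=\bigcup_m Q^*_m$ is dense in $\grG$ (observed immediately after the grid construction), and consequently $Q+x$ is dense as well, since translation by $x$ is a homeomorphism of $\grG$.

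Given these, density of $V_N$ is nearly immediate. For any nonempty open $U\subs\grG$, pick $q\in Q$ with $q+x\in U$, let $m_0$ be such that $q\in Q^*_{m_0}$, and choose $n\geq N$ with $f(n)\geq m_0$, possible since $f\in\UPset$ is unbounded. Monotonicity yields $q\in Q^*_{f(n)}$, so $\dist(q+x,Q^*_{f(n)}+x)=0<c\eps_{f(n+1)}$, which puts $q+x$ into $U\cap V_N$ and finishes the argument.

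The only point that might momentarily look delicate is the monotonicity of $\seq{Q^*_m}$, and this is exactly what the padding-by-$0$ design in the definition $Q^*_m=Q_{0,m}+\dots+Q_{m,m}$ buys us; everything else is bookkeeping. Notice that the specific scale $c\eps_{f(n+1)}$ plays no role in this lemma (any positive sequence would do); only the unboundedness of $f$ is used, and the invariance of the metric enters merely through the already-established density of $Q$.
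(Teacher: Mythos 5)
Your proof is correct and follows essentially the same route as the paper: both arguments rest on the monotonicity of $\seq{Q^*_m}$, the density of $Q+x$, and the unboundedness of $f$, the paper merely compressing your explicit decomposition into the observation that $H(f,x,c)$ is an $F_\sigma$ set disjoint from the dense set $Q+x=\bigcap_{n}\bigcup_{m\geq n}(Q^*_{f(m)}+x)$. Your explicit verification that $Q^*_m\subseteq Q^*_{m+1}$ via padding by $0$ is a detail the paper leaves implicit, and it is carried out correctly.
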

\begin{proof}
$H(f,x,c)$ is disjoint with the set
$$
  \{z\in\grG:\emany n\ z\in Q^*_{f(n)}+x\}=\bigcap_{n\in\omega}\bigcup_{m\geq n}(Q^*_{f(m)}+x)=Q+x.
$$
Since $Q$ is dense in $\grG$, so is $Q+x$. On the other hand,
$H(f,x,c)$ is obviously $F_\sigma$. Hence it is meager.
\end{proof}
Not only that the sets $H(f,x,c)$ are meager, they actually form a base
for meager sets in $\grG$. We need yet a little more:
\begin{lem}\label{polish4}
$\forall M\in\MM\ \forall f\in\UPset\ \exists g\in\UPset\ \exists x\in\grG\
M\subs H(f{\circ} g,x,1)$
\end{lem}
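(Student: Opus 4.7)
The plan is to write $M$ as an increasing union $M \subs \bigcup_k F_k$ of closed nowhere dense sets and then construct $g\in\UPset$ and $x=\sum_k x_k$ by a single interleaved recursion, arranging the uniform margin
\[
  \dist\bigl(Q^*_{f(g(n))}+x,\,F_k\bigr)\;\geq\;\eps_{f(g(n+1))}\qquad\text{for all }k\leq n.
\]
Since the $F_k$ increase, this at once forces $M\subs H(f\circ g,x,1)$, because any $z\in M$ lies in some $F_k$, hence in every $F_n$ for $n\geq k$, so $z$ inherits the distance lower bound along a cofinite tail of $n$.

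The recursion exploits that each $Q^*_n$ is \emph{finite} (an $(n+1)$-fold sum of sets of size at most $n$). At stage $k$, with $y_k=x_0+\dots+x_{k-1}$ already in hand and a tolerance $\del_{k-1}$ inherited from the previous stage, I would form the finite set $S_k=Q^*_{f(g(k))}+y_k$. Bi-invariance of the metric then lets me recast the desired inequality ``$\dist(S_k+x_k,F_k)\geq 3\del_k$'' as a single condition on $x_k$ against the set
\[
  R_k\;=\;\bigcup_{s\in S_k}(-s+F_k),
\]
which is a finite union of translates of a nowhere dense closed set, hence itself closed and nowhere dense. Baire category yields some $x_k\in B(0,\del_{k-1}/10)$ with $r_k:=\dist(x_k,R_k)>0$; only \emph{afterwards} would I choose $g(k+1)>g(k)$ so large that $\eps_{f(g(k+1))}\leq\min\{r_k/3,\del_{k-1}/100\}$ and set $\del_k:=\eps_{f(g(k+1))}$. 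The bounds $\|x_k\|\leq\del_{k-1}/10$ and $\del_k\leq\del_{k-1}/100$ guarantee $\sum\|x_k\|<\infty$, so $x$ is well defined, and give a tail estimate $\|\sum_{j>k}x_j\|\leq\del_k/9$. Invariance of $d$ then upgrades the stage-$k$ inequality to
\[
  \dist\bigl(Q^*_{f(g(k))}+x,F_k\bigr)\;\geq\;3\del_k-\del_k/9\;>\;\eps_{f(g(k+1))},
\]
and the analogous inequality at each later stage $n>k$, combined with $F_k\subs F_n$, extends the bound to all $n\geq k$.

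The main obstacle is the circular dependence between $x_k$ and the target $\del_k=\eps_{f(g(k+1))}$: one cannot fix $\del_k$ in advance and then look for $x_k$ within $B(0,\del_{k-1}/10)$ achieving margin $3\del_k$ from $R_k$, because $R_k$ could accumulate arbitrarily close to $0$. The remedy is to extract \emph{some} positive $r_k$ first, using only nowhere-denseness and Baire, and only then pick $g(k+1)$ large enough for $\del_k$ to slide safely under $r_k/3$. Apart from this ordering and the geometric-series bookkeeping, the argument is a routine Rothberger-type diagonalization in which the invariance of $d$ plays the role that translation-invariance of meagreness plays on $\Rset$ or $\Cset$.
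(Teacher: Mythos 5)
Your argument is correct and follows essentially the same route as the paper's proof: the same interleaved recursion producing $x=\sum_k x_k$ and $g$, with the same crucial ordering (first locate a point at positive distance $r_k$ from the closed nowhere dense set $\bigcup_{s\in S_k}(-s+F_k)$, and only \emph{then} choose $g(k+1)$ so that $\eps_{f(g(k+1))}$ slides under $r_k$), followed by the same geometric-series tail estimate that degrades the margin $3\del_k$ to something still exceeding $\eps_{f(g(k+1))}$. The only cosmetic difference is that you take the good point $x_k$ directly, whereas the paper additionally approximates it by an element of $Q_{f(g(k))+1}+\dots+Q_{f(g(k+1))}$ — a constraint that is internal to the paper's write-up and not needed for the statement as used.
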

\begin{proof}
Suppose $F_n$ are closed nowhere dense sets such that $F_n\upto M$.
Write $G_n=\grG\setminus F_n$.

We recursively define a sequence $\seq{x_n}$ in $\grG$ and an increasing
sequence $\seq{k_n}$ of integers subject to the following conditions.
Write $y_n=x_0+x_1+\dots+x_n$.
\begin{align}
  &x_{n+1}\in Q_{f(k_n)+1}+Q_{f(k_n)+2}+\dots+Q_{f(k_{n+1})},\label{x1}\\
  &\dist(F_n,Q^*_{f(k_n)}+y_{n+1})>3\eps_{f(k_{n+1})}. \label{x2}
\end{align}
Let $k_0=0$ and $x_0=0$ and proceed by induction.
Suppose that $x_n$ and $k_n$ are defined.
The sets $-x_n-q-G_n$ are open dense for each $q\in\grG$. Since $Q^*_{f(k_n)}$ is finite, there is
$z\in\grG$ and $\del>0$ such that
\begin{equation}\label{dense}
  B(z,\del)\subs\bigcap_{q\in Q^*_{f(k_n)}}(-y_n-q+G_n)\cap B(0,\eps_{f(k_n)}).
\end{equation}
Choose $k_{n+1}>k_n$ large enough to satisfy $4\eps_{f(k_{n+1})}<\del$.

Denote for the moment $m=f(k_n)$ and $j=f(k_{n+1})-m$.
\begin{itemyze}
\item Since $\norm{z}\leq\eps_m$, there is $t_1\in Q_{m+1}$ such that
$d(t_1,z)\leq\eps_{m+1}$.
\item Since $\norm{-t_1+z}\leq\eps_{m+1}$, there is $t_2\in Q_{m+2}$ such that
$d(t_2,-t_1+z)\leq\eps_{m+2}$.\\
$\vdots$
\item Proceed by induction up to $m+j$ to get, for each $i\in[1,j]$, $t_i\in Q_{m+i}$ such that
\begin{equation}\label{eq:t}
\norm{-t_j-t_{j-1}-\dots-t_1+z}\leq\eps_{m+j}.
\end{equation}
\end{itemyze}
Put $x_{n+1}=t_1+t_2+\dots+t_j$. Condition \eqref{x1} is clearly satisfied.

Using left invariance of the metric, inequality~\eqref{eq:t} reads
$d(z,x_{n+1})\leq\eps_{f(k_{n+1})}$.
Therefore $4\eps_{f(k_{n+1})}<\del$ and \eqref{dense} yield
$$
  B(x_{n+1},3\eps_{f(k_{n+1})})\subs B(z,4\eps_{f(k_{n+1})})
  \subs B(z,\del)\subs-y_n-q+G_n
$$
for all $q\in Q^*_{f(k_n)}$.
Using left invariance of the metric again, we get
$B(q+y_{n+1},3\eps_{f(k_{n+1})})\cap F_n=\emptyset$
and \eqref{x2} follows.
Since $Q_{j+1}\subs B(0,\eps_j)$ for all $j$, \eqref{x1} yields
$\norm{x_{n+1}}\leq \sum_{j=f(k_n)}^\infty\eps_j=2\eps_{f(k_n)}$.
In particular,
$\sum\norm{x_n}<\infty$, whence the series $\sum x_n$ converges.
Set $x=\sum x_n=\lim y_n$ and $g(n)=k_n$.

\eqref{x1} also yields $d(x,y_{n+1})\leq 2\eps_{f(k_{n+1})}$. Combine this inequality
with \eqref{x2} and triangle inequality to get, for each $q\in Q^*_{f(k_n)}$,
\begin{align*}
  \dist(F_n,q+x)&\geq\dist(F_n,q+y_{n+1})-d(q+y_{n+1},q+x)\\
                &\geq 3\eps_{f(k_{n+1})}-d(y_{n+1},x)\eps_{f(k_{n+1})}=\eps_{fg(n+1)}.
\end{align*}
It follows that $\dist(F_n,Q^*_{f(k_n)}+x)\geq\eps_{fg(n+1)}$ for all $n$, which is enough.
\end{proof}

\subsection*{Key lemma}
We now prove an important combinatorial lemma on the canonical meager sets.

\begin{lem}\label{polish5}
Let $x,y\in\grG$ and $f,g\in\UPset$. Suppose that for each $k\in\omega$
\begin{align}
  & \abs{Q_{f(k)}}>1,  \label{b1} \\
  & \text{$Q^*_{f(k)}$ is $3\eps_{f(k+1)}$-separated},  \label{b2} \\
  & f(k+1)\geq f(k)+4.  \label{b3}
\end{align}
If $H(f,x,\frac14)\subs H(f{\circ} g,y,1)$, then
$$
  \fmany n\in\omega\ \exists k\in[g(n),g(n+1))\quad \dist(Q^*_{f(k)}+x,Q^*_{f(k)}+y)\leq\eps_{f(k+1)}.
$$
\end{lem}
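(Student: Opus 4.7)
I would prove the contrapositive. Suppose there is an infinite set $A\subs\omega$ such that for every $n\in A$ and every $k\in[g(n),g(n+1))$,
\[
  \dist(Q^*_{f(k)}+x,\,Q^*_{f(k)}+y)>\eps_{f(k+1)}.
\]
The goal is to exhibit $z\in H(f,x,\tfrac14)\setminus H(f{\circ}g,y,1)$; i.e., a single $z$ satisfying (A) $\dist(z,Q^*_{f(k)}+x)\geq\tfrac14\eps_{f(k+1)}$ for all but finitely many $k$, and (B) $\dist(z,Q^*_{f{\circ}g(n)}+y)<\eps_{f{\circ}g(n+1)}$ for infinitely many $n$.

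The plan is to build $z$ as a Cauchy limit along a very sparse subsequence $n_1<n_2<\cdots$ of $A$. At stage $j$ I select $q_j\in Q^*_{f{\circ}g(n_j)}$ and a stage point $z_j\in\grG$ with $d(z_j,q_j+y)$ much smaller than $\eps_{f{\circ}g(n_j+1)}$. Each new $q_{j+1}$ is chosen so that $q_{j+1}+y$ is close to $z_j$: this is possible because, by the maximal $\eps_m$-separated construction of the $Q_m$'s, $Q^*_m$ is $\eps_m$-dense in a large region of $\grG$, so for $n_{j+1}$ sufficiently large an appropriate $q_{j+1}$ exists. Hypothesis (b3) forces the increments $d(z_j,z_{j+1})$ to shrink geometrically, so $z:=\lim z_j$ exists by completeness of the invariant metric.

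Condition (B) is then immediate: for each $j$, $q_j+y\in Q^*_{f{\circ}g(n_j)}+y$ and $d(z,q_j+y)\leq d(z,z_j)+d(z_j,q_j+y)<\eps_{f{\circ}g(n_j+1)}$. For condition (A), given large $k$ let $n$ be the unique index with $k\in[g(n),g(n+1))$ and let $j$ be maximal with $n_j\leq n$; then $q_j\in Q^*_{f{\circ}g(n_j)}\subs Q^*_{f(k)}$ by nestedness of the $Q^*_m$, so $q_j+y\in Q^*_{f(k)}+y$. When $n\in A$, the bad inequality together with $d(z,q_j+y)<\tfrac12\eps_{f(k+1)}$ (arranged in the construction) and the triangle inequality give
\[
  \dist(z,Q^*_{f(k)}+x)\geq\dist(q_j+y,Q^*_{f(k)}+x)-d(z,q_j+y)>\tfrac12\eps_{f(k+1)},
\]
which is stronger than needed.

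The principal obstacle is the case $n\notin A$: for such $k$ the bad inequality is unavailable, and being close to $q_j+y\in Q^*_{f(k)}+y$ does not by itself separate $z$ from $Q^*_{f(k)}+x$. To handle this I plan to combine hypotheses (b1) and (b2): (b2), the $3\eps_{f(k+1)}$-separation of $Q^*_{f(k)}$, ensures that at each scale at most one translate $p+x\in Q^*_{f(k)}+x$ can come within $\eps_{f(k+1)}$ of $q_j+y$; (b1), $\abs{Q_{f(k)}}>1$, supplies a nontrivial element that can be added to $q_j$ to perturb $q_j+y$ within an $\eps_{f(k)}$-neighbourhood. At each recursive stage I use this perturbation freedom to push $z$ away from the at-most-one dangerous $p+x$ at each good $k$; hypothesis (b3) provides the scale separation that prevents these simultaneous perturbations from conflicting. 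Organizing all these constraints into a single coherent recursion is where the technical bulk of the argument lies.
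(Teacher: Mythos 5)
Your high-level strategy matches the paper's: pass to the contrapositive, and from an infinite set $A$ of bad blocks build a single witness in $H(f,x,\frac14)\setminus H(f{\circ}g,y,1)$, with (b2) limiting the dangerous translates in $Q^*_{f(k)}+x$ to at most one per scale, (b1) supplying the nonzero perturbation, and (b3) giving the scale separation. However, the recursion you defer (``organizing all these constraints into a single coherent recursion'') is where the entire content of the lemma lies, and the scaffolding you propose for it does not work as stated.

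Two concrete problems. First, for $n\in A$ with $n_j<n<n_{j+1}$ and $k\in[g(n),g(n+1))$ you invoke $d(z,q_j+y)<\frac12\eps_{f(k+1)}$, but $q_j$ was fixed at the much coarser scale $\eps_{fg(n_j+1)}$, and $z$ is meanwhile perturbed at every intermediate scale $k'<k$ by amounts comparable to $\eps_{f(k'+1)}$, each of which dwarfs $\eps_{f(k+1)}$; this bound cannot be ``arranged.'' Second, and more seriously, the re-synchronization at stage $n_{j+1}$ --- choosing $q_{j+1}\in Q^*_{fg(n_{j+1})}$ with $q_{j+1}+y$ close to the current approximation --- must be accurate to within a fraction of $\eps_{fg(n_{j+1})}$, or the jump destroys the separation from $Q^*_{f(k)}+x$ already secured at the scales $k$ just below $g(n_{j+1})$. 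Density of $Q=\bigcup_mQ^*_m$ only gives $\dist(w,Q^*_m+y)\to0$, not $\dist(w,Q^*_m+y)=o(\eps_m)$: the finite grid $Q^*_m$ is $\eps_m$-dense only on the part of $\grG$ reachable by sums of the \emph{first $m$ elements} of each $Q_i$, and an arbitrary accumulated approximation need not lie there. The paper kills both problems at once by never leaving the grid: it sets $\tau=\sum_k\tau_k$ with $\tau_k\in Q_{f(k+1)}$ equal either to $0$ or to the first nonzero element of $Q_{f(k+1)}$, so that every partial sum $T_k$ lies in $Q^*_{f(k)}$. Freezing ($\tau_k=0$) throughout each block indexed by $n\in A$ then gives $\dist(\tau,Q^*_{fg(n)})\leq\norm{t_{g(n+1)}}<\eps_{fg(n+1)}$ for \emph{every} such $n$ with no re-synchronization, and the bad inequality applied to the live grid point $T_k\in Q^*_{f(k)}$ --- rather than to a stale $q_j$ --- yields your condition (A) on those blocks; on all remaining blocks a case split on whether $\dist(T_k,Q^*_{f(k)}+x)$ is below $\frac12\eps_{f(k+1)}$ decides whether to perturb. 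You should rebuild your recursion around the invariant $T_k\in Q^*_{f(k)}$; as written, the proposal is not a proof.
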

\begin{proof}
Using right invariance of the metric we may assume $y=0$.
Set
$$
  S=\{n\in\omega:\forall k\in[g(n),g(n+1))\quad \dist(Q^*_{f(k)}+x,Q^*_{f(k)})>\eps_{f(k+1)}\}.
$$
We need to prove that $S$ is finite. So aiming towards contradiction suppose that
$\abs{S}=\Nset$.
Our goal is to construct $\tau\in\grG$ such that
\begin{align}
  &\forall n\in S\quad \dist(\tau,Q^*_{fg(n)})<\eps_{fg(n+1)},  \label{s1}\\
  &\fmany k\in\omega\quad \dist(\tau,Q^*_{f(k)}+x)\geq\frac14\eps_{f(k+1)}. \label{s2}
\end{align}
Once we have such a $\tau$,
since $\abs{S}=\omega$, condition \eqref{s1} will ensure that
$\tau\notin H(f{\circ} g,0,1)$.
On the other hand, condition \eqref{s2} will ensure that
$\tau\in H(f,x,\frac14)$: the desired contradiction.

$\tau$ will be defined as a sum of a series. For each $k\in\omega$ we shall choose
$\tau_k\in Q_{f(k+1)}$ according to the following rules, and then we let
$\tau=\sum_{k\in\omega}\tau_k$.

For any $k$ put $T_k=\sum_{i<k}\tau_i$ and $t_k=\sum_{i\geq k}\tau_i$.

$\tau_k$'s are set up recursively according to the following rules.
\begin{enumerate}[(a)]
\item If there is $n\in S$ such that $k\in[g(n),g(n+1))$, put $\tau_k=0$.
\item If there is $n\notin S$ such that $k\in[g(n),g(n+1))$ and
$\dist(T_k,Q^*_{f(k)}+x)\geq\frac12\eps_{f(k+1)}$, put $\tau_k=0$.
\item If there is $n\notin S$ such that $k\in[g(n),g(n+1))$ and
$\dist(T_k,Q^*_{f(k)}+x)<\frac12\eps_{f(k+1)}$, let $\tau_k$ be the first nonzero element
of $Q_{f(k+1)}$. Condition \eqref{b1} ensures that such a choice is possible.
\end{enumerate}
The rules ensure that
\begin{equation}\label{s3}
    T_k\in Q_{f(1),2}+Q_{f(2),2}+\dots+Q_{f(k),2}\subs Q^*_{f(k)}.
\end{equation}
Notice also that since $\tau_i\in Q_{f(i+1)}$, we have $\norm{\tau_i}\leq2\eps_{f(i+1)}$
and thus condition \eqref{b3} yields
\begin{equation}\label{s4}
    \norm{t_k}\leq\frac{2}{15}\eps_{f(k)}.
\end{equation}

We prove \eqref{s1} first. Suppose $n\in S$.
Since $T_{g(n)}\in Q^*_{fg(n)}$ by \eqref{s3}, we have
$$
  \dist(\tau,Q^*_{fg(n)})\leq d(T_{g(n)}+t_{g(n)},T_{g(n)})=\norm{t_{g(n)}}.
$$
Since $n\in S$, rule (a) yields $t_{g(n)}=t_{g(n+1)}$ and thus \eqref{s4}
gives
$$
  \dist(\tau,Q^*_{fg(n)})\leq\norm{t_{g(n+1)}}\leq \frac{2}{15}\eps_{fg(n+1)}
  <\eps_{fg(n+1)}.
$$
Thus \eqref{s1} is proved.
The proof of \eqref{s2} is split into three cases corresponding to the three rules (a)--(c).
\subsection*{Claim 1}
If $n\in S$ and $k\in[g(n),g(n+1))$,
then $\dist(\tau,Q^*_{f(k)}+x)\geq\frac14\eps_{f(k+1)}$.
\begin{proof}
Since $n\in S$, \eqref{s3} implies
$
  \dist(Q^*_{f(k)}+x,T_k)>\eps_{f(k+1)}.
$
Rule (a) yields $t_k=t_{g(n+1)}$.
Hence
$
  \norm{t_k}=\norm{t_{g(n+1)}}\leq\frac{2}{15}\eps_{fg(n+1)}\leq\frac{2}{15}\eps_{f(k+1)}.
$
Combine and use triangle inequality and left invariance of the metric to get
\begin{align*}
  \dist(Q^*_{f(k)}+x,\tau)
  &\geq\dist(Q^*_{f(k)}+x,T_k)-\dist(T_k,\tau) \\
  &>\eps_{f(k+1)}-\norm{t_k}\geq\eps_{f(k+1)}-\frac{2}{15}\eps_{f(k+1)}
  >\frac14\eps_{f(k+1)}. \qedhere
\end{align*}
\end{proof}

\subsection*{Claim 2}
If $n\notin S$, $k\in[g(n),g(n+1))$ and $\dist(T_k,Q^*_{f(k)}+x)\geq\frac12\eps_{f(k+1)}$,
then $\dist(\tau,Q^*_{f(k)}+x)\geq\frac14\eps_{f(k+1)}$
\begin{proof}
This time rule (b) applies. Since $\tau_k=0$, we have $t_k=t_{k+1}$.
Use the same arguments as before and the assumption.

\begin{align*}
  \dist(Q^*_{f(k)}+x,\tau)
  &\geq\dist(Q^*_{f(k)}+x,T_k)-\dist(T_k,\tau) \\
  &\geq\frac12\eps_{f(k+1)}-\norm{t_k}=\frac12\eps_{f(k+1)}-\norm{t_{k+1}}\\
  &\geq\frac12\eps_{f(k+1)}-\frac{2}{15}\eps_{f(k+1)}
  >\frac14\eps_{f(k+1)}. \qedhere
\end{align*}
\end{proof}

\subsection*{Claim 3}
If $n\notin S$, $k\in[g(n),g(n+1))$ and $\dist(T_k,Q^*_{f(k)}+x)<\frac12\eps_{f(k+1)}$,
then $\dist(\tau,Q^*_{f(k)}+x)\geq\frac14\eps_{f(k+1)}$.
\begin{proof}
This is the only nontrivial case. Geometrically, an approximation $T_k$ of $\tau$ is
close to some point of $Q^*_{f(k)}+x$, so that the next term $\tau_k$ in the sequence
is chosen large enough to move the new approximation $T_{k+1}$ of $\tau$ away from that point.
We however have to make sure that $T_{k+1}$ does not get close to another point of
$Q^*_{f(k)}+x$. That is why condition~\eqref{b2} is required.

Write $\tau=T_k+\tau_k+t_{k+1}$. We know that $\norm{t_{k+1}}\leq\frac{2}{15}\eps_{f(k+1)}$.
Since $\tau_k\neq0$, we also have $\eps_{f(k+1)}\leq\norm{\tau_k}\leq2\eps_{f(k+1)}$.

By assumption, there is $q_0\in Q^*_{f(k)}$ such that $d(q_0+x,T_k)<\frac12\eps_{f(k+1)}$.
First estimate $d(q_0+x,\tau)$ from above:
\begin{align*}
  d(q_0+x,\tau)&\leq d(q_0+x,T_k)+\norm{\tau_k}+\norm{t_{k+1}}\\
  &\leq\left(\frac12+2+\frac{2}{15}\right)\eps_{f(k+1)}<\frac83\eps_{f(k+1)}.
\end{align*}
Use this inequality to estimate $d(p+x,\tau)$ from below for any $p\in Q^*_{f(k)}$, $p\neq q_0$.
Here condition~\eqref{b2} gets in play: it yields
$d(p+x,q_0+x)=d(p,q_0)\geq 3\eps_{f(k+1)}$ and thus
\begin{align*}
  d(p+x,\tau)&\geq d(p+x,q_0+x)-d(q_0+x,\tau)
  \\
  &\geq 3\eps_{f(k+1)}-\frac83\eps_{f(k+1)}>\frac14\eps_{f(k+1)}.
\end{align*}
It remains to show that $d(q_0+x,\tau)\geq\frac14\eps_{f(k+1)}$:
\begin{align*}
  d(q_0+x,\tau)&\geq \norm{\tau_k}-d(q_0+x,T_k)-\norm{t_{k+1}}\\
  &\geq\eps_{f(k+1)}-\frac12\eps_{f(k+1)}-\frac{2}{15}\eps_{f(k+1)}>\frac14\eps_{f(k+1)}.
  \qedhere
\end{align*}
\end{proof}

\smallskip
We can now finish the proof of Lemma~\ref{polish5}:
Claims 1--3 yield \eqref{s2}. Therefore
$\tau\in H(f,x,\frac14)\setminus H(f{\circ} g,0,1)$: the required contradiction.
\end{proof}
\subsection*{Proof of Theorem~\ref{polish1}}

Let $X\subs\grG$ be $\MM$-additive. We will show that $\uhm^h(X)=0$ for
every gauge $h$.

Begin by recursively building $f\in\UPset$ subject to conditions
\eqref{b1}, \eqref{b2} and \eqref{b3} and
\begin{equation}\label{b4}
    \forall k\in\omega\quad h\bigl(2\eps_{f(k+1)}\bigr)<\frac{2^{-k}}{\abs{Q^*_{f(k)}}^2}.
\end{equation}
Consider the set $H(f,0,\frac14)$. It is meager by Lemma~\ref{polish3}. Therefore
$H(f,0,\frac14)+X$ is meager as well.
By virtue of Lemma~\ref{polish4} there thus exists $g\in\UPset$ and $y\in\grG$
such that $H(f,0,\frac14)+x\subs H(f{\circ} g,y,1)$ for all $x\in X$.
Right invariance of the metric gives $H(f,0,\frac14)+x=H(f,x,\frac14)$.
It follows that $H(f,x,\frac14)\subs H(f{\circ} g,y,1)$ for all $x\in X$.
Lemma~\ref{polish5} thus yields
\begin{equation}\label{m1}
  \forall x\in X\, \fmany n\in\omega\, \exists k\in[g(n),g(n+1))
  \ \dist(Q^*_{f(k)}+x,Q^*_{f(k)}+y)\leq\eps_{f(k+1)}.
\end{equation}
The latter inequality means that there are $p,q\in Q^*_{f(k)}$ such that
$d(p+x,q+y)\leq\eps_{f(k+1)}$ which is, via left invariance of the metric, equivalent to
$$
  x\in B(-p+q+y,\eps_{f(k+1)}).
$$
Therefore~\eqref{m1} can be phrased as follows:
\begin{equation}\label{m2}
  \begin{split}
     \forall x\in X\ \fmany n\in\Nset\ \exists k\in[g(n),g(n+1))&\ \exists p,q\in Q^*_{f(k)}\\
     & x\in B(-p+q+y,\eps_{f(k+1)}).
  \end{split}
\end{equation}
Set
\begin{alignat*}{2}
  &\mc B_k &&=\{B(-p+q+y,\eps_{f(k+1)}):p,q\in Q^*_{f(k)}\},\qquad k\in\omega,\\
  &\mc G_n &&=\bigcup\{\mc B_k:g(n)\leq k<g(n+1)\},\qquad n\in\omega,\\
  &\mc G   &&=\bigcup_{k\in\omega}\mc B_k=\bigcup_{n\in\omega}\mc G_n.
\end{alignat*}
With this setting, \eqref{m2} reads
$$
  \forall x\in X\ \fmany n\in\omega\ \exists G\in\mc G_n\quad x\in G.
$$
For all $k\in\omega$ we have $\abs{\mc B_k}\leq\abs{Q^*_{f(k)}}^2$.
In particular, the families $\mc G_n$ are finite, witnessing that
$\mc G$ is a $\gamma$-groupable cover of $X$.

By Lemma~\ref{gammagr}, in order to prove that $\uhm^h(X)=0$
it is enough to show that $\sum_{G\in\mc G}h(\diam G)$ is finite.
Clearly
$\diam B(-p+q+y,\eps_{f(k+1)}\leq2\eps_{f(k+1)}$.
Therefore \eqref{b4} yields
\begin{align*}
  \sum_{G\in\mc G}h(\diam G)&=
  \sum_{k\in\omega}\sum_{G\in\mc B_k}h(\diam G)\\
  &=
  \sum_{k\in\omega}\sum_{p,q\in Q^*_{f(k)}}h(\diam B(-p+q+y,\eps_{f(k+1)}))\\
  &\leq
  \sum_{k\in\omega}\abs{Q^*_{f(k)}}^2\, h(2\eps_{f(k+1)})
  \leq
  \sum_{k\in\omega}2^{-k}<\infty.
\end{align*}
The proof of Theorem~\ref{polish1} is complete.
\qed

%%%%%%%%%%%%%%%%%%%%%%%%%%%%%%%%%%%%%%%%%%%%%%%%%%%%%%%%%%%%%%%%%
%%%%%%%%%%%%%%%%%%%%%%%%%%%%%%%%%%%%%%%%%%%%%%%%%%%%%%%%%%%%%%%%%
\section{Some consequences}\label{sec:conseq}
%%%%%%%%%%%%%%%%%%%%%%%%%%%%%%%%%%%%%%%%%%%%%%%%%%%%%%%%%%%%%%%%%
%%%%%%%%%%%%%%%%%%%%%%%%%%%%%%%%%%%%%%%%%%%%%%%%%%%%%%%%%%%%%%%%%

\subsection*{Continuous images and cartesian products}

Meager-additive sets are preserved by continuous mappings of Polish groups
and by cartesian products as follows:
\begin{thm}\label{mapping22}
Let $\grG_1$ be a \emph{tsi} Polish group and
$\grG_2$ a locally compact Polish group.
Let $f:\grG_1\to\grG_2$ a continuous mapping.
If $X\subs\grG_1$ is $\MM$-additive, then so is $f(X)$.
\end{thm}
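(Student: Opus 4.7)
The plan is to chain together the three main structural theorems of the paper (Theorems \ref{polish1}, \ref{thm:action3}, and Proposition \ref{mapping1}) and thereby reduce the statement to the purely metric property of sharp measure zero, which transfers easily under continuous maps. The asymmetry in the hypotheses, tsi on the source side and locally compact on the target side, is exactly what one needs to pass from the algebraic notion $\MM$-additive to the metric notion $\ssmz$ and then back again.

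First, since $\grG_1$ is tsi and $X \subs \grG_1$ is $\MM$-additive, Theorem \ref{polish1} produces that $X$ is $\ssmz$. Here we use an invariant metric on $\grG_1$, which is automatically complete, so the hypotheses of Theorem \ref{polish1} are met. Next, fix any metric on $\grG_2$ compatible with its Polish topology; the source $\grG_1$ is completely metrizable, so Proposition \ref{mapping1} (applied with $X$ replaced by the $\ssmz$ set in $\grG_1$) delivers that $f(X) \subs \grG_2$ is $\ssmz$. Finally, $\grG_2$ is locally compact Polish, so Theorem \ref{thm:action3} converts $\ssmz$ back into sharply $\MM$-additive. Since sharply $\MM$-additive sets are trivially $\MM$-additive (any \si compact superset $K$ with $MK$ meager certainly witnesses that $Mf(X) \subs MK$ is meager), we conclude that $f(X)$ is $\MM$-additive.

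There is essentially no obstacle beyond verifying that the hypotheses match up at each step. The only minor point of care is that one must invoke the metric-independence of sharp measure zero on both sides: on $\grG_1$, we use Theorem \ref{polish1} with the invariant (hence complete) metric, while on $\grG_2$ we use Proposition \ref{indep1} to guarantee that the $\ssmz$ conclusion for $f(X)$ does not depend on which compatible metric we chose. Once that is noted, the proof is a one-line chain: $\MM$-additive $\Rightarrow \ssmz \Rightarrow \ssmz \Rightarrow$ sharply $\MM$-additive $\Rightarrow \MM$-additive, with the arrows justified respectively by Theorem \ref{polish1}, Proposition \ref{mapping1}, Theorem \ref{thm:action3}, and the trivial inclusion $\MM^\sharp(\grG_2) \subs \MM^*(\grG_2)$.
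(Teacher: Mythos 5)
Your proposal is correct and follows exactly the paper's own argument: Theorem~\ref{polish1} to pass from $\MM$-additive to $\ssmz$ in the tsi source group, Proposition~\ref{mapping1} to push $\ssmz$ forward through the continuous map, and Theorem~\ref{thm:action3} to return to (sharply) $\MM$-additive in the locally compact target. The extra remarks on metric-independence are careful but not a departure from the paper's route.
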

\begin{proof}
Since $X$ is $\MM$-additive it is $\ssmz$ by Theorem~\ref{polish1}.
By Proposition~\ref{mapping1}, $f(X)$ is $\ssmz$ as well.
By Theorem~\ref{thm:action3}, $f(X)$ is $\MM$-additive.
\end{proof}

\begin{thm}\label{cart}
Let $\grG_1,\grG_2$ be \emph{tsi} locally compact groups.
Let $X_1\subs\grG_1$, $X_2\subs\grG_2$.
\begin{enum}
\item If $X_1$ is $\smz$ and $X_2$ is $\MM$-additive, then $X_1\times X_2$ is $\smz$.
\item If $X_1$ and $X_2$ are $\MM$-additive, then so is $X_1\times X_2$.
\end{enum}
\end{thm}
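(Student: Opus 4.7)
The plan is to derive both parts from Theorem~\ref{main3a} combined with Kuratowski--Ulam and the Galvin--Mycielski--Solovay theorem of Kysiak and Fremlin for locally compact Polish groups, as invoked in Section~\ref{sec:loccpt}.

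For part~(ii), I would first note that $\grG_1\times\grG_2$, equipped with the maximum of two invariant metrics, is again a tsi locally compact Polish group. By Theorem~\ref{main3a}, $X_i$ being $\MM$-additive is equivalent to $X_i$ being $\ssmz$; and $X_1\times X_2$ being $\MM$-additive in the product group is equivalent to $X_1\times X_2$ being $\ssmz$. So it suffices to prove a purely metric statement: the product of two $\ssmz$ sets is $\ssmz$. Given a (WLOG decreasing) sequence $\seqeps$, I would apply $\ssmz$ of $X_1$ and $X_2$ with suitably decaying auxiliary sequences to obtain $\gamma$-groupable covers $\{U_k\}$ and $\{V_l\}$ with witnessing partitions $\{I_j\}$ and $\{J_j\}$. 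The key combinatorial point: if $\{A_j\}$ is a $\gamma$-cover of $X_1$ and $\{B_j\}$ is a $\gamma$-cover of $X_2$, then $\{A_j\times B_j\}_j$ is a $\gamma$-cover of $X_1\times X_2$ (because an intersection of two cofinite sets is cofinite). Lifting this observation to the $\gamma$-groupable level, the family $\{U_k\times V_l\}$ with witnessing partition $\{I_j\times J_j\}_j$, enumerated block-by-block in $\Nset$, is a $\gamma$-groupable cover of $X_1\times X_2$ whose max-metric diameters are controlled by $\max(\diam U_k,\diam V_l)$; the auxiliary sequences can be chosen so these diameters fall under $\eps_n$ at each position $n$.

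For part~(i), by Theorem~\ref{main3a} $X_2$ is $\ssmz$ and sharply $\MM$-additive. By the Kysiak--Fremlin GMS it suffices to show, for every meager $M\subs\grG_1\times\grG_2$, that $(X_1\times X_2)+M\neq\grG_1\times\grG_2$. Unwinding, $(z_1,z_2)\in(X_1\times X_2)+M$ iff $z_1\in X_1+U_{z_2}$, where $U_{z_2}=\bigcup_{x_2\in X_2}M^{z_2-x_2}\subs\grG_1$. The plan is to exhibit $z_2$ for which $U_{z_2}$ is meager; Kysiak--Fremlin's GMS applied to the $\smz$ set $X_1$ then furnishes $z_1\notin X_1+U_{z_2}$. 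To find such $z_2$, consider the twisted set
\[
  M'=\{(z_2,z_1)\in\grG_2\times\grG_1:\exists x_2\in X_2,\ (z_1,z_2-x_2)\in M\},
\]
whose $z_1$-sections equal $X_2+M_{z_1}$ and are meager for comeager $z_1$ (Kuratowski--Ulam on $M$, then $\MM$-additivity of $X_2$); sharp $\MM$-additivity of $X_2$, together with Lemma~\ref{totbd}, permits enclosing $M'$ in an $F_\sigma$ meager set, and reversing Kuratowski--Ulam yields that $(M')^{z_2}=U_{z_2}$ is meager for comeager $z_2$.

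The main obstacle is the Baire-property step in part~(i): the twisted set $M'$ is an uncountable union of translates of a meager Borel set indexed by $X_2$, which need not itself be Borel. Sharp $\MM$-additivity gives a $\sigma$-compact superset of $X_2$ for each individual meager set in $\grG_2$, but reversing Kuratowski--Ulam requires a single $\sigma$-compact superset that works uniformly across the comeager family of sections $\{M_{z_1}\}_{z_1}$. Handling this uniformization---presumably by combining Lemma~\ref{totbd} with a further K--U argument on a companion meager set in $\grG_1\times\grG_2$---is the technical crux.
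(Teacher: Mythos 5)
Your reduction of part (ii) to the purely metric statement that a product of two $\ssmz$ sets is $\ssmz$, via Theorem~\ref{main3a} applied to $\grG_1$, $\grG_2$ and the product group, is exactly the route the paper takes; the paper, however, does not reprove that metric statement but cites \cite[Theorem 3.14]{Zin-prism}, which also supplies the fact needed for part (i), namely that a product of a $\smz$ set with a $\ssmz$ set is $\smz$. The gap in your argument lies precisely in your from-scratch proof of the product statement. The $j$-th witnessing group of your product cover contains $\abs{I_j}\cdot\abs{J_j}$ sets, so its members occupy positions up to roughly $\sum_{i\leq j}\abs{I_i}\,\abs{J_i}$ in the enumeration, and each must have diameter at most $\eps$ of its own position. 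These positions depend on the block sizes of \emph{both} witnessing partitions, which are only revealed after both covers have been produced; whichever factor you treat first, the required decay of its auxiliary sequence depends on data coming from the other factor, and for the second factor it even depends on the sizes of its own yet-unknown blocks. This is exactly the circularity that makes a product of two $\smz$ sets consistently fail to be $\smz$, so the sentence ``the auxiliary sequences can be chosen so these diameters fall under $\eps_n$ at each position $n$'' is the entire content of the theorem and cannot be waved through; one must identify the feature of $\gamma$-groupability that breaks the circularity, which is the work done in \cite[Theorem 3.14]{Zin-prism}.

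For part (i) your route is genuinely different from the paper's, which is again a one-line application of \cite[Theorem 3.14]{Zin-prism} after noting that $X_2$ is $\ssmz$, and, as you yourself observe, it is incomplete: the twisted set $M'$ is a union of translates of a Borel set indexed by the arbitrary uncountable set $X_2$, so it need not have the Baire property, and Kuratowski--Ulam cannot be ``reversed'' on it. Sharp $\MM$-additivity hands you a $\sigma$-compact $K\sups X_2$ for each \emph{single} meager subset of $\grG_2$, but you need one $K$ that works simultaneously for the uncountable family of sections $\{M_{z_1}\}$, and no mechanism for that uniformization is provided. Since this step is acknowledged rather than closed, part (i) is not proved. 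Both parts become short deductions from Theorem~\ref{main3a} once \cite[Theorem 3.14]{Zin-prism} is invoked.
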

\begin{proof}
(i)
By Theorem \ref{main3a}, $X_2$ is $\ssmz$. By
\cite[Theorem 3.14]{Zin-prism}, a product of a $\smz$ and $\ssmz$ sets is $\smz$.

(ii)
By Theorem \ref{main3a}, both $X_1$ and $X_2$ are $\ssmz$. By
\cite[Theorem 3.14]{Zin-prism}, a product of two $\ssmz$ sets is $\ssmz$.
Now apply Theorem \ref{main3a} to conclude that $X_1\times X_2$ is
$\MM$-additive.
\end{proof}

\begin{coro}
Let $\grG_1,\grG_2$ be \emph{tsi} locally compact groups.
Let $X\subs\grG_1\times\grG_2$.
The following are equivalent.
\begin{enum}
\item $X$ is $\MM$-additive,
\item $\proj_1X$ and $\proj_2X$ are $\MM$-additive,
\item $\proj_1X\times\proj_2X$ is $\MM$-additive.
\end{enum}
\end{coro}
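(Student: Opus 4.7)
The plan is to prove the cyclic chain of implications (iii) $\Rightarrow$ (i) $\Rightarrow$ (ii) $\Rightarrow$ (iii), where each link is a direct application of a result already established in the paper.

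First, (iii) $\Rightarrow$ (i) is immediate: since $X \subseteq \proj_1 X \times \proj_2 X$ and $\MM^*(\grG_1\times\grG_2)$ is a \si ideal (hence closed under subsets), any $\MM$-additive superset of $X$ witnesses that $X$ is $\MM$-additive. So no real work is needed here.

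For (i) $\Rightarrow$ (ii), I would note that $\grG_1 \times \grG_2$ is itself a tsi locally compact Polish group (an invariant metric is given by $d((x_1,x_2),(y_1,y_2)) = \max\{d_1(x_1,y_1), d_2(x_2,y_2)\}$, and local compactness passes to finite products), so Theorem~\ref{mapping22} applies with source $\grG_1 \times \grG_2$ and target $\grG_i$. The coordinate projections $\proj_i\colon \grG_1\times\grG_2 \to \grG_i$ are continuous, so $\proj_i X$ is $\MM$-additive in $\grG_i$ for $i = 1, 2$.

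Finally, (ii) $\Rightarrow$ (iii) is precisely the content of Theorem~\ref{cart}(ii): the cartesian product of two $\MM$-additive sets in tsi locally compact groups is $\MM$-additive. There is no genuine obstacle in any of the three steps; the corollary is essentially a bookkeeping consequence of Theorems~\ref{mapping22} and~\ref{cart}, once one observes that the product group inherits the required structural properties (tsi and locally compact) from its factors.
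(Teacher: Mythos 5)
Your proof is correct and follows essentially the same route as the paper: (iii)$\Rightarrow$(i) is trivial by downward closure, (i)$\Rightarrow$(ii) applies Theorem~\ref{mapping22} to the coordinate projections, and (ii)$\Rightarrow$(iii) is Theorem~\ref{cart}(ii). Your explicit check that $\grG_1\times\grG_2$ inherits the tsi and locally compact hypotheses (so that Theorem~\ref{mapping22} really applies) is a detail the paper leaves implicit, and it is worth having.
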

\begin{proof}
Write $X_1=\proj_1 X$, $X_2=\proj_2 X$. (iii)\Implies(i) is trivial.

(i)\Implies(ii) Suppose $X$ is $\MM$ additive.
Then by Theorem~\ref{mapping22} both $X_1$ and $X_2$ are $\MM$ additive.

(ii)\Implies(iii) Suppose $X_1$ and $X_2$ are $\MM$ additive.
Then by Theorem~\ref{cart} $X_1\times X_2$ is $\MM$ additive.
\end{proof}

\subsection*{Binary operations}

Theorems~\ref{thm:action3} and~\ref{main3a} reveal a surprising fact
about $\MM$\nobreakdash-additive sets in \emph{tsi} locally compact groups:

\begin{coro}
Let $\grG$ be a \emph{tsi} locally compact group.
Let $\star$ be a continuous binary operation on $\grG$ such that
$N\star x$  is nowhere dense for each nowhere dense set $N$ and $x\in\grG$.
If $X\subs\grG$ is $\MM$-additive, then $M\star X$ is meager for
each meager set $M$.

In particular, this holds for any action $\phi:\grG\curvearrowright\grG$.
\end{coro}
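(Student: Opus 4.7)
The plan is to deduce this directly from Theorem~\ref{thm:action}. Since $\grG$ is a tsi locally compact group and $X\subs\grG$ is $\MM$\nobreakdash-additive, Theorem~\ref{main3a} immediately upgrades $X$ to a $\ssmz$ set. This is the only place the hypotheses on $\grG$ enter, and it is what opens the door to the earlier machinery.

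Next, I would fit the corollary into the framework of Theorem~\ref{thm:action} by taking $Y:=\grG$ (Polish) and $X:=\grG$ (separable locally compact), and defining the continuous map $\psi:Y\times X\to X$ by $\psi(s,m):=m\star s$. The hypothesis on $\star$ translates into exactly the nowhere-dense condition required by Theorem~\ref{thm:action}: for every $s\in Y$ and every compact nowhere dense $P\subs X$, $\psi(\{s\}\times P)=P\star s$ is nowhere dense by assumption. Applying Theorem~\ref{thm:action} with $S:=X$ (which is $\ssmz$) and with the given meager $M$, I obtain a \si compact set $\widehat{X}\supseteq X$ such that $\psi(\widehat{X}\times M)=M\star\widehat{X}$ is meager; hence $M\star X\subs M\star\widehat{X}$ is meager, as required.

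For the ``in particular'' clause, observe that if $\phi$ is the canonical action of $\grG$ on itself by (left) multiplication, then $N\star x=Nx$ is the image of $N$ under right translation by $x$, which is a self-homeomorphism of $\grG$; so nowhere dense sets are sent to nowhere dense sets and the hypothesis on $\star$ is automatic. I anticipate no real obstacle here — the corollary amounts to bookkeeping on top of Theorems~\ref{main3a} and~\ref{thm:action}, with the only mild subtlety being the choice of the order of arguments in $\psi$ so that $\psi(\widehat{X}\times M)$ reads as $M\star\widehat{X}$ and not as $\widehat{X}\star M$.
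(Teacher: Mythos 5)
Your argument is correct and is essentially the route the paper intends: the paper states this corollary without proof as a consequence of Theorems~\ref{main3a} and~\ref{thm:action3}, and the real engine is exactly Theorem~\ref{thm:action} applied to $\psi(s,m)=m\star s$ after upgrading $X$ to $\ssmz$ via Theorem~\ref{main3a}, just as you do. The only small imprecision is in the ``in particular'' clause: the corollary asserts it for \emph{any} continuous action $\phi:\grG\curvearrowright\grG$, not just left multiplication, but your justification generalizes verbatim since for any action each map $\phi(g,\cdot)$ is a self-homeomorphism of $\grG$ and hence preserves nowhere density.
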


\subsection*{Universally meager sets}
Recall that a separable metric space $E$ is termed \emph{universally meager}
(Zakrzewski \cite{MR1814112,MR2427418})
if for any perfect Polish spaces $Y,X$ such that $E\subs X$ and every
continuous one--to--one mapping $f:Y\to X$ the set $f^{-1}(E)$ is meager
in $Y$. By~\cite[Proposition 6.11]{Zin-prism} every \ssmz{} set is universally meager.
Thus Theorem~\ref{main3a} yields

\begin{prop}\label{umg}
Every $\MM$-additive set in a \emph{tsi} Polish group is universally meager.
\end{prop}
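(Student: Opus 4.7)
The plan is to chain two results already available. The statement combines Theorem~\ref{polish1}, which asserts that meager-additivity implies sharp measure zero in any \emph{tsi} Polish group, with the external fact cited in the paragraph just above the proposition, namely \cite[Proposition 6.11]{Zin-prism}, which states that every $\ssmz$ set is universally meager.

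First, I would take an arbitrary $\MM$-additive set $X\subs\grG$ and fix an invariant (hence automatically complete) metric on $\grG$, using the \emph{tsi} hypothesis. Theorem~\ref{polish1}, applied in this metric, yields that $X$ has sharp measure zero. Note that by the remark preceding Theorem~\ref{polish1}, being $\ssmz$ is independent of the particular choice of complete metric, so no additional care is required at this step.

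Second, I would apply \cite[Proposition 6.11]{Zin-prism} to the $\ssmz$ set $X$, regarded simply as a separable metrizable space. Since universal meagerness is an intrinsic property of the separable metrizable space $X$ (it refers only to continuous injections from perfect Polish spaces into Polish overspaces of $X$), it transfers verbatim and needs no compatibility with the group operation.

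There is essentially no obstacle. The only point to verify is the matching of hypotheses: the \emph{tsi} Polish group assumption of the proposition is precisely the hypothesis ``Polish group admitting an invariant metric'' of Theorem~\ref{polish1}, and in particular no local compactness is needed, which is why Theorem~\ref{polish1} rather than the more restrictive Theorem~\ref{main3a} is the right ingredient here.
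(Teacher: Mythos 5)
Your proof is correct and follows essentially the same route as the paper: Theorem~\ref{polish1} gives that an $\MM$-additive set is $\ssmz$, and \cite[Proposition 6.11]{Zin-prism} gives that every $\ssmz$ set is universally meager. You are in fact slightly more careful than the paper's own one-line derivation, which invokes Theorem~\ref{main3a} (a statement requiring local compactness) where Theorem~\ref{polish1} is the appropriate reference for general \emph{tsi} Polish groups.
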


\subsection*{Transitively meager sets}
Nowik, Scheepers and Weiss~\cite{MR1610427} studied the notion of
transitively meager sets on the line. By the definition,
a set $S\subs\grG$ in a Polish group is \emph{transitively meager}
if
$$
  \forall P\subs\grG\text{ perfect }\exists F\sups S\text{ \si compact }
  \forall x\in\grG\  P\cap xF\text{ is meager in $P$}.
$$
\begin{prop}\label{trans}
Every $\MM$-additive set in a \emph{tsi} Polish group is transitively meager.
\end{prop}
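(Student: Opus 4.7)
The plan has two stages: first invoke Theorem~\ref{polish1} to upgrade $\MM$-additivity of $X$ to sharp measure zero, then construct the required $\sigma$-compact witness by a Galvin--Mycielski--Solovay-style argument paralleling the proof of Theorem~\ref{thm:action}.

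By Theorem~\ref{polish1}, the $\MM$-additive set $X\subs\grG$ is $\ssmz$ in the invariant metric on $\grG$. Fix a perfect Polish $P\subs\grG$; the goal is to produce a $\sigma$-compact $F\sups X$ with $xF\cap P$ meager in $P$ for every $x\in\grG$. Since $P$ is Baire and $xF$ is $F_\sigma$ in $P$ for any $\sigma$-compact $F$, meagerness in $P$ is equivalent to empty interior in $P$. Fix a countable base $\{V_n\}$ of nonempty open sets in $P$ and, inside each $V_n$, a compact perfect (Cantor-like) subset $K_n\subs V_n$; such $K_n$ exists because $P$ is perfect Polish. Since $\{K_n\}$ is cofinal in the sense that every nonempty open $V\subs P$ contains some $K_n$, the task reduces to arranging that no translate $xK_n$ is contained in $F$, for any $n$ and any $x\in\grG$.

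I build $F$ by an inductive scheme. The $\ssmz$ property of $X$ yields, for any sequence $\seqeps$ chosen to decrease fast enough, a $\gamma$-groupable cover $\{E_m\}$ of $X$ with $\diam E_m\leq\eps_m$; set $F=\bigcup_m\clos{E_m}$, which is $\sigma$-compact and contains $X$. By the invariance of the metric on $\grG$, $\diam(xK_n)=\diam K_n$ for every $x$, so for each fixed $n$ all but finitely many $\clos{E_m}$ are too small in diameter to contain any translate of $K_n$; only the finitely many ``large'' closures need be treated separately. Choosing $\seqeps$ and the witnessing groups of the $\gamma$-groupable cover inductively---in the spirit of the proofs of Theorem~\ref{thm:action} and Lemma~\ref{polish5}---one arranges that the union of the ``large'' pieces avoids containing any translate of any $K_n$.

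The main obstacle is the uniform quantifier over all $x\in\grG$ with a single $F$. I handle it through a diagonal construction over a countable dense $\{x_j\}\subs\grG$, exploiting the observation that for each $n$ and each compact piece $F_m$ of $F$, the set $\{x\in\grG:K_n\subs xF_m\}=\bigcap_{k\in K_n}kF_m^{-1}$ is closed, so it suffices to control the countably many $x_j$'s together with a limit-continuity argument. Strict diameter inequalities ensured during the inductive construction then promote the dense-complement condition to emptiness, yielding $K_n\not\subs xF$ for every $x\in\grG$ and every $n$, which is exactly what is needed.
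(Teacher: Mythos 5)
Your first step---invoking Theorem~\ref{polish1} to conclude that an $\MM$-additive set is $\ssmz$---is exactly the paper's. The paper then finishes in one line by citing \cite[Lemma 6.10]{Zin-prism}, applied with $Y=P$, $X=Z=\grG$, $\phi$ the inclusion of $P$ into $\grG$, and $\mc F$ the family of left translations (equicontinuous because the metric is invariant). You instead attempt to reprove the substance of that lemma directly, and your sketch breaks precisely at the two places where the real work lies.

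First, the diameter argument does not do what you want. To rule out $yK_n\subs F=\bigcup_m\clos{E_m}$ it is not enough to observe that no single small-diameter piece $\clos{E_m}$ can contain a translate of $K_n$: the containment $yK_n\subs F$ only says that $yK_n$ is \emph{covered} by the pieces, and a perfect compact set is easily covered by infinitely many sets of arbitrarily small diameter. (Baire category inside $yK_n$ only produces some $\clos{E_m}\cap yK_n$ with nonempty relative interior in $yK_n$, which is perfectly compatible with $\diam E_m$ being tiny.) So the reduction to ``finitely many large pieces'' is unfounded; the danger comes from the small pieces. Second, the passage from a countable dense set of translating elements to all $x\in\grG$ is asserted rather than proved. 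Your remark that $\{x:K_n\subs xF_m\}$ is closed is correct for a single compact $F_m$, but the relevant set is $\{x:K_n\subs xF\}$ with $F$ only $\sigma$-compact, and that set is not closed (nor of any usefully low complexity), so ``dense complement plus limit-continuity'' does not yield emptiness. Obtaining this uniformity over $x$ is exactly what the equicontinuity hypothesis in \cite[Lemma 6.10]{Zin-prism} --- or the uniform $\del$ supplied by Lemma~\ref{comp} in the proof of Theorem~\ref{thm:action} --- is for: one must run a nested-ball recursion in which each step simultaneously handles a whole ball of translating elements. As written, the proposal identifies the correct target but leaves both essential steps unproved.
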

\begin{proof}
This is an easy consequence of a technical \cite[Lemma 6.10]{Zin-prism}:
\begin{lem}[{\cite[Lemma 6.10]{Zin-prism}}]
Let $X,Y,Z$ be perfect Polish spaces and $\phi:Y\to X$ a continuous one--to--one mapping.
Let $\mc F$ be an equicontinuous family of uniformly continuous mappings of $Z$ into $X$.
If $E\subs Z$ is \ssmz{}, then there is a \si compact set
$F\sups E$ such that $\phi^{-1}f(F)$ is meager in $Y$ for all $f\in\mc F$.
\end{lem}
Let $S$ be an $\MM$-additive set in $\grG$. By Theorem~\ref{polish1}
it is \ssmz. Now apply the lemma with
$Y=P$, $X=Z=\grG$, $\phi:P\to\grG$ the identity inclusion,
and $\mc F$ the family of all left translates.
\end{proof}

\subsection*{$\gamma$-sets}
Recall the notion of $\gamma$-set, as introduced by
Gerlits and Nagy~\cite{MR667661}.
A family $\mc U$ of open sets in a separable metric space $X$ is called
an \emph{$\omega$-cover} of $X$ if every finite subset of $X$ is
contained in some $U\in\mc U$.
A metric space $X$ is a \emph{$\gamma$-set} if every $\omega$-cover of $X$
contains a $\gamma$-cover.
Generalizing a result of
Nowik and Weiss~\cite[Proposition 3.7]{MR1905154}, ~\cite[Proposition 6.6]{Zin-prism}
proved that every $\gamma$-set is \ssmz{}.
This, together with Theorem~\ref{thm:action3}, yields

\begin{prop}\label{gamma}
Every $\gamma$-set in a locally compact Polish group is sharply $\MM$-additive.
\end{prop}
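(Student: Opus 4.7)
The plan is to chain together two results already in the paper. First, by the Nowik--Weiss/Zindulka result cited immediately above the statement (\cite[Proposition 3.7]{MR1905154} and \cite[Proposition 6.6]{Zin-prism}), every $\gamma$-set in a separable metric space is $\ssmz$. Hence if $X\subs\grG$ is a $\gamma$-set (in the subspace metric inherited from any compatible metric on $\grG$), then $X$ is $\ssmz$.

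Second, Theorem~\ref{thm:action3} states that in a locally compact Polish group, the classes of $\ssmz$ sets and sharply $\MM$-additive sets coincide. Applying the forward direction of that theorem to $X$ immediately gives that $X$ is sharply $\MM$-additive. Combining the two steps yields the proposition.

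The only delicate point worth commenting on is that the notion of $\gamma$-set is purely topological, while $\ssmz$ a priori depends on a choice of metric; however, as noted after Proposition~\ref{trivUH} (and used implicitly in Proposition~\ref{indep1}), $\ssmz$ in a locally compact Polish group does not depend on the compatible metric chosen, so there is no ambiguity when invoking Theorem~\ref{thm:action3}. No new obstacle arises, since both cited results do the substantive work.
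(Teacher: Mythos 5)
Your proof is correct and matches the paper's own argument exactly: the paper likewise combines the cited fact that every $\gamma$-set is $\ssmz$ with Theorem~\ref{thm:action3}. Your added remark on metric-independence (via Proposition~\ref{indep1}) is a sensible clarification but introduces nothing beyond what the paper already relies on.
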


%%%%%%%%%%%%%%%%%%%%%%%%%%%%%%%%%%%%%%%%%%%%%%%%%%%%%%%%%%%%%%%%%
%%%%%%%%%%%%%%%%%%%%%%%%%%%%%%%%%%%%%%%%%%%%%%%%%%%%%%%%%%%%%%%%%
\section{Uniformity number of meager-additive sets}
\label{sec:uniformity}
%%%%%%%%%%%%%%%%%%%%%%%%%%%%%%%%%%%%%%%%%%%%%%%%%%%%%%%%%%%%%%%%%
%%%%%%%%%%%%%%%%%%%%%%%%%%%%%%%%%%%%%%%%%%%%%%%%%%%%%%%%%%%%%%%%%

Let $\grG$ be a Polish group. We have considered the following \si ideals of small sets:
\begin{itemyze}
\item $\MM^*(\grG)$, the ideal of $\MM$-additive sets in $\grG$,
\item $\MM^\sharp(\grG)$, the ideal of sharply $\MM$-additive sets in $\grG$,
\item $\ssmz(\grG)$, the ideal of $\ssmz$ sets in $\grG$.
This notion may depend, unlike the other two, on the metric.
\end{itemyze}
We will calculate uniformity numbers of $\MM^*(\grG)$ and $\ssmz(\grG)$
for \emph{tsi} Polish groups. Recall that
if $\JJ$ is an ideal of subsets of a set $X$, then
\begin{itemyze}
\item $\non\JJ=\min\{\abs A:A\notin\JJ\}$
(\emph{uniformity} of $\JJ$)
\item $\add\JJ=\min\{\abs{\mc A}:\mc A\subs\JJ,\bigcup\mc A\notin\JJ\}$
(\emph{additivity} of $\JJ$)
\item $\cov\JJ=\min\{\abs{\mc A}:\mc A\subs\JJ,\bigcup\mc A=\grG\}$
(\emph{covering} of $\JJ$)
\end{itemyze}

The invariants $\non\MM(\Cset)$, $\add\MM(\Cset)$ and $\cov\MM(\Cset)$
are commonly denoted just $\non\MM$, $\add\MM$ and $\cov\MM$.

The cardinal invariant $\nonM^*(\Cset)$ is termed \emph{transitive additivity of $\MM$}
and denoted $\addM^*$ in~\cite{MR1350295,MR776210}, where a combinatorial
characterization of $\addM^*$ is also provided (\cite[Theorem 2.7.14]{MR1350295}).

\begin{thm}\label{invariants}
Let $\grG$ be a \emph{tsi} Polish group.
\begin{enum}
\item If $\grG$ is locally compact, then
$\non\ssmz(\grG)=\nonM^*(\grG)=\add^*\MM$.
\item If $\grG$ is not locally compact, then
$\non\ssmz(\grG)=\nonM^*(\grG)=\addM$.
\end{enum}
\end{thm}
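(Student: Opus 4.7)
The plan is to establish in both cases the chain
\begin{equation*}
  \addM \le \add\MM^*(\grG) \le \non\MM^*(\grG) \le \non\ssmz(\grG) \le \nonM(\grG),
\end{equation*}
and then pin down the extremes case by case. The leftmost inequality will hold because $\MM^*(\grG)$ is a $\sigma$-ideal containing every singleton: for $X=\{x_\alpha:\alpha<\kappa\}$ with $\kappa<\addM$ and meager $M$, the product $MX=\bigcup_{\alpha<\kappa}Mx_\alpha$ is a union of fewer than $\addM$ meager sets, hence meager. The middle inequality is Theorem~\ref{polish1}, and the rightmost uses $\ssmz(\grG)\subseteq\MM(\grG)$, which is immediate since every $\ssmz$ set is $\smz$ and therefore meager in a perfect Polish group.

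For case (i), Theorem~\ref{main3a} collapses the chain via $\MM^*(\grG)=\ssmz(\grG)$, so $\non\MM^*(\grG)=\non\ssmz(\grG)$. To identify this common value with $\addM^*=\non\MM^*(\Cset)$, I would translate the content of the proof of Theorem~\ref{polish1} into combinatorics: the grid $Q$ of Section~\ref{sec:polish} together with the canonical sets $H(f,x,c)$ recasts $\MM^*$-additivity of $X\subseteq\grG$ as a covering condition against the finite cells $Q^*_{f(k)}$, which is precisely the chopped real characterisation of $\addM^*$ from Bartoszy\'nski--Judah~\cite[Theorem~2.7.14]{MR1350295}. Since the relevant combinatorial data (cell sizes, separations, growth rates) is essentially the same across all locally compact \emph{tsi} Polish groups, $\non\MM^*(\grG)=\non\MM^*(\Cset)=\addM^*$.

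For case (ii), the chain already delivers $\addM\le\non\MM^*(\grG)\le\non\ssmz(\grG)$, so what remains is the upper bound $\non\ssmz(\grG)\le\addM$. The plan is to construct a non-$\ssmz$ set of size $\addM$ directly from a witness to $\addM$. Starting from meager sets $\{M_\alpha:\alpha<\addM\}$ whose union is non-meager, I would apply Lemma~\ref{polish4} (with a diagonal refinement to achieve a common ambient $f$) to embed each $M_\alpha$ in $H(f,x_\alpha,1)$. Non-local-compactness is then used to ensure the grids $Q_n$ are infinite, inflating the factors $\abs{Q^*_{f(k)}}^2$ in \eqref{b4}; one then shows that the set $X=\{x_\alpha:\alpha<\addM\}$ evades every $\gamma$-groupable cover of prescribed diameters, yielding $\uhm^h(X)>0$ for some gauge and hence $X\notin\ssmz(\grG)$ by Theorem~\ref{basicUhnull}.

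The main obstacle will be quantitatively harnessing non-local-compactness in case (ii). In case (i) local compactness keeps the grid $Q$ effectively finite-dimensional at each scale, matching the chopped-real combinatorics of $\addM^*$; in its absence the grids are infinite and the relevant combinatorics shifts to the slalom characterisation of $\addM$. Calibrating the adversarial choice of $X$ against the growing cardinalities $\abs{Q^*_{f(k)}}$ --- so that no single $\gamma$-groupable cover can work --- is the delicate step, and this is where the argument must depart from case (i) and draw on the analysis of meager ideals in non-locally-compact Polish groups cited in the introduction.
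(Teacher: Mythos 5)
Your chain $\addM\le\add\MM^*(\grG)\le\non\MM^*(\grG)\le\non\ssmz(\grG)$ and the collapse of the first two uniformities in case (i) via Theorem~\ref{main3a} match the paper, but both of the hard steps are missing. In case (i), the identification of the common value with $\add^*\MM=\non\MM^*(\Cset)$ is asserted on the grounds that ``the relevant combinatorial data is essentially the same across all locally compact \emph{tsi} Polish groups''; that is not an argument, and the chopped-reals characterization of $\add^*\MM$ is specific to $\Cset$. The paper instead gives a genuine transfer: a non-$\ssmz$ set $X$ of minimal size meets some compact $K_n$ in a non-$\ssmz$ set, $K_n$ is dyadic (a continuous image of $\Cset$), and lifting through the surjection and using Proposition~\ref{mapping1} produces a non-$\ssmz$ subset of $\Cset$ of the same size, giving $\non\ssmz(\Cset)\le\non\ssmz(\grG)$; the reverse inequality comes from the perfect set theorem. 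You need something of this kind, not an appeal to analogous combinatorics.

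In case (ii), the crucial upper bound $\non\ssmz(\grG)\le\addM$ is where your plan breaks down. Lemma~\ref{polish4} applied to meager sets $M_\alpha$ with non-meager union produces parameters $x_\alpha$, but there is no reason whatsoever that $\{x_\alpha:\alpha<\addM\}$ should fail to be $\ssmz$ --- nothing ties the size or distribution of these translation parameters to a Hausdorff-measure lower bound, and your ``one then shows that $X$ evades every $\gamma$-groupable cover'' is exactly the step with no content. The paper's route is structural: a non-locally-compact \emph{tsi} Polish group contains a \emph{uniform} copy of $\Pset$ (\cite[Lemma 5.4]{MR3453581}), so $\non\ssmz(\grG)\le\non\ssmz(\Pset)$; in $\Pset$ an unbounded set of size $\bbb$ is not contained in a \si compact set (hence not $\ssmz$ by Lemma~\ref{totbd}), the Fremlin--Miller set gives a non-$\smz$ set of size $\cov\MM$, and Miller's theorem $\addM=\min(\bbb,\cov\MM)$ finishes. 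None of these ingredients appears in your sketch. A minor but genuine further error: your justification of $\non\ssmz(\grG)\le\non\MM(\grG)$ via ``every $\smz$ set is meager'' is false (a Luzin set is $\smz$ and non-meager); $\ssmz$ sets are meager, but for a different reason (e.g.\ they are universally meager, or one uses the \si compact containment). Fortunately that inequality plays no role in the rest of your plan.
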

\begin{proof}
(i)
Let $K_n\upto\grG$ be a sequence of compact sets. Let $X\subs\grG$ be
a not $\ssmz$ set such that $\abs X=\non\ssmz(\grG)$.
There is $n$ such that $X\cap K_n$ is not $\ssmz$, therefore
$\abs{X\cap K_n}=\non\ssmz(\grG)$. Since $K_n$ is compact, it is \emph{dyadic}:
there is a continuous mapping $f:\Cset\to K_n$ onto $K_n$.
For each $x\in X\cap K_n$ pick $\widehat x\in f^{-1}(x)$ and set
$\widehat X=\{\widehat x:x\in X\cap K_n\}$.
Then $f(\widehat X)=X\cap K_n$, hence $\widehat X$ is by Proposition~\ref{mapping1}
not $\ssmz$, and clearly $\abs{\widehat X}=\abs{X\cap K_n}=\non\ssmz(\grG)$.
It follows that
$\non\ssmz(\Cset)\leq\non\ssmz(\grG)$.

On the other hand, by the Perfect set theorem, $\grG$ contains a copy of $\Cset$.
It follows that
$\non\ssmz(\Cset)\geq\non\ssmz(\grG)$.

Combine the two inequalities to get $\non\ssmz(\Cset)=\non\ssmz(\grG)$.
By Theorem~\ref{main3a} we have $\ssmz(\Cset)=\MM^*(\grG)$ and likewise for $\Cset$.
Thus $\non\ssmz(\Cset)=\non\MM^*(\Cset)=\add^*\MM$ and
$\non\ssmz(\grG)=\non\MM^*(\grG)$ and the result follows.

(ii)
We will consider $\ssmz$ sets in $\Pset$. The metric on $\Pset$ is the usual
least difference metric: for distinct $f,g\in\Pset$ we let $d(f,g)=2^{-n}$,
where $n$ is the first number for which $f(n)\neq g(n)$.

Recall that a set $X\subs\Pset$ is \emph{bounded} if
$\exists g\in\Pset\ \forall f\in X\ \fmany n\in\Nset\ f(n)\leq g(n)$.
Let $\bbb=\min\{\abs X:X\subs\Pset\text{ is not bounded}\}$
denote the \emph{bounding number}.

Let $X\subs\Pset$ be an unbounded set such that $\abs X=\bbb$.
Since $X$ is not bounded, it is not contained in a \si compact set
and in particular (by Lemma~\ref{totbd}) it is not $\ssmz$.
It follows that $\non\ssmz(\Pset)\leq\bbb$.

Fremlin and Miller~\cite{MR954892} constructed a set $X\subs\Pset$ that is not $\smz$
and $\abs X=\cov\MM$. This set is clearly not $\ssmz$ and thus
$\non\ssmz(\Pset)\leq\cov\MM$.

It follows that $\non\ssmz(\Pset)\leq\min(\cov\MM,\bbb)$. Miller~\cite{MR613787}
proved that $\min(\cov\MM,\bbb)=\addM$ (see also \cite[2.2.9]{MR1350295}).
Overall we have
\begin{equation}\label{card2}
  \non\ssmz(\Pset)\leq\addM.
\end{equation}
Our group $\grG$ is not locally compact and is equipped with an invariant metric.
By~\cite[Lemma 5.4]{MR3453581}, such a group contains a uniform copy of $\Pset$
and thus $\non\ssmz(\grG)\leq\non\ssmz(\Pset)$. Combine with
\eqref{card2} to get
\begin{equation}\label{card3}
  \non\ssmz(\grG)\leq\addM.
\end{equation}

It is well-known that $\addM(X)=\addM$ for every uncountable Polish space.
Thus if $X$ is a set in $\grG$ such that $\abs X<\addM$, then
for each meager set $M\subs\grG$ the set $MX$ is a union of less than
$\addM$ many meager sets and is thus meager. It follows that
\begin{equation}\label{card4}
  \non\MM^*(\grG)\geq\addM.
\end{equation}
Finally, by Theorem~\ref{polish1}, $\MM^*(\grG)\subs\ssmz(\grG)$ and hence
\begin{equation}\label{card5}
  \non\MM^*(\grG)\leq\non\ssmz(\grG).
\end{equation}
Now combine \eqref{card4},\eqref{card5} and \eqref{card3}:
$$
 \addM\leq\non\MM^*(\grG)\leq\non\ssmz(\grG)\leq\addM
$$
and the result follows.
\end{proof}

\begin{rem}\label{nottop}
Theorem~\ref{invariants} provides a simple argument proving that $\ssmz$
is consistently not a topological property. (Remind that under Borel Conjecture
$\ssmz$ is a topological property for a trivial reason.)

The Baer-Specker group $\Zset^\Nset$ is a \emph{tsi} Polish group.
On the other hand, it is homeomorphic to the set
of irrational numbers, so regard it as a subset of $\Rset$.
By Theorem~\ref{invariants}(ii) there is a set $X\subs\Zset^\Nset$ such that
$\abs X=\addM$
and that is not $\ssmz$ in the invariant metric.
On the other hand, if $\addM<\add^*\MM$, then $X$ is by Theorem~\ref{invariants}(i)
$\ssmz$ in the metric of the real line. Since $\addM<\add^*\MM$
is relatively consistent with ZFC (as proved by Pawlikowski~\cite{MR776210}),
$X$ is consistently a set that is $\ssmz$ is one metric on $\Zset^\Nset$
and not $\ssmz$ in another homeomorphic metric.
\end{rem}

%%%%%%%%%%%%%%%%%%%%%%%%%%%%%%%%%%%%%%%%%%%%%%%%%%%%%%%%%%%%%%%%%
%%%%%%%%%%%%%%%%%%%%%%%%%%%%%%%%%%%%%%%%%%%%%%%%%%%%%%%%%%%%%%%%%
\section{Questions}\label{sec:questions}
%%%%%%%%%%%%%%%%%%%%%%%%%%%%%%%%%%%%%%%%%%%%%%%%%%%%%%%%%%%%%%%%%
%%%%%%%%%%%%%%%%%%%%%%%%%%%%%%%%%%%%%%%%%%%%%%%%%%%%%%%%%%%%%%%%%

We conclude with a few open problems.

\subsection*{$\MM$-additive vs.~$\ssmz$ under the Continuum Hypothesis}

As to $\smz$ sets in groups, let us recall that
Galvin-Mycielski-Solovay Theorem has been extended to all
locally compact Polish groups by Kysiak \cite{kysiak} and Fremlin \cite{fremlin5}.
Further extension of the theorem to Polish groups has been proven to be impossible:
Hru\v s\'ak, Wohofsky and Zindulka~\cite{MR3453581}
and Hru\v s\'ak and Zapletal~\cite{MR3707641} found that under the
Continuum Hypothesis the Galvin--Mycielski--Solovay Theorem fails for
any \emph{tsi} Polish groups that are not locally compact.

We ask if an analogous situation occurs for $\ssmz$ sets.
We know from Theorem~\ref{polish1} that for a \emph{tsi} Polish group
we have $\MM^*(\grG)\subs\ssmz(\grG)$. Does the converse inclusion hold?

\begin{question}
Assume the Continuum Hypothesis.
Is there a \emph{tsi}, not locally compact Polish group $\grG$ such that
$\MM^*(\grG)\neq\ssmz(\grG)$?
\end{question}

We conjecture the answer to be negative. More specifically, we may ask about the
Baer-Specker group:

\begin{question}
Assume the Continuum Hypothesis.
Is there a $\ssmz$ set in $\Zset^\Nset$ that is not $\MM$-additive?
\end{question}

\subsection*{\emph{tsi} condition}

The proof of Theorem~\ref{polish1} relies on the two-sided invariance
of the metric. Can we do better?
\begin{question}
Do Theorems~\ref{polish1} and~\ref{main3a} remain valid for all
Polish groups? In other words, can the assumption that the group is
\emph{tsi} be dropped or relaxed to the existence of a
complete left-invariant metric?
\end{question}

\subsection*{$\EE$-additive sets}
Let $\grG$ be locally compact Polish group. Denote by $\EE$ the \si ideal
consisting of sets that are contained in an $F_\sigma$-set of Haar measure zero.
A set $X\subs\grG$ is \emph{$\EE$-additive} if $EX\in\EE$ for every $E\in\EE$.
The notion of \emph{sharply $\EE$-additive} sets can be defined, too,
in the obvious manner.

In~\cite[Theorem 5.2]{Zin-prism} it is shown that a set $X\subs\Cset$
is $\MM$-additive if and only if it is $\EE$-additive if and only if
it is sharply $\EE$-additive. The proofs heavily depend on
the combinatorics of sets in $\EE$.
We have no idea if this theorem or at least some inclusions
extend to other locally compact groups.

\begin{question}\label{qf}
\begin{enum}
\item In what locally compact Polish groups is every $\MM$-additive set
$\EE$-additive (or sharply $\EE$-additive)?
\item In what locally compact Polish groups is every $\EE$-additive
(or sharply $\EE$-additive) set $\MM$-additive?
\item In what locally compact Polish groups is every $\EE$-additive
sharply $\EE$-additive?
\end{enum}
\end{question}

The two new classes extend the family of ideals that we are interested in to five:
$\ssmz$, $\MM^*$, $\MM^\sharp$, $\EE^*$ and $\EE^\sharp$,
leading to a number of questions about possible inclusions.
For \emph{tsi} locally compact groups, the twelve questions reduce,
due to Theorem~\ref{main3a}, to the five listed in Question \ref{qf}.

\section*{Acknowledgments}
I would like to thank Michael Hru\v s\'ak for his kind support and
help while I was
visiting Instituto de matem\'aticas, Unidad Morelia,
Universidad Nacional Auton\'oma de M\'exico;
and Piotr Szewczak and Boaz Tsaban who encouraged me to write the paper.
%%%%%%%%%%%%%%%%%%%%%%%%%%%%%%%%%%%%%%%%%%%%%%%%%%%%%%%%%%%%%%%%%
%%%%%%%%%%%%%%%%%%%%%%%%%%%%%%%%%%%%%%%%%%%%%%%%%%%%%%%%%%%%%%%%%
% BIBLIOGRAPHY ----------------------------------------------------
\bibliographystyle{amsplain}
%\bibliographystyle{asl}
%\bibliography{M-additive}
\providecommand{\bysame}{\leavevmode\hbox to3em{\hrulefill}\thinspace}
\providecommand{\MR}{\relax\ifhmode\unskip\space\fi MR }
% \MRhref is called by the amsart/book/proc definition of \MR.
\providecommand{\MRhref}[2]{%
  \href{http://www.ams.org/mathscinet-getitem?mr=#1}{#2}
}
\providecommand{\href}[2]{#2}

%%%%%%%%%%%%%%%%%%%%%%%%%%%%%%%%%%%%%%%%%%%%%%%%%%%%%%%%%%%%%%%%%
%%%%%%%%%%%%%%%%%%%%%%%%%%%%%%%%%%%%%%%%%%%%%%%%%%%%%%%%%%%%%%%%%
\end{document}